\newtheorem{prop}{Proposition}[section]
\newtheorem{cor}[prop]{Corollary}
\newtheorem{thm}[prop]{Theorem}
\newtheorem{conj}{Conjecture}
\theoremstyle{definition}
\newtheorem{rem}[prop]{Remark}
\newtheorem{defi}[prop]{Definition}
\newtheorem{ex}[prop]{Example}
\def\S{\mathbb{S}}
\def\Z{\mathbb{Z}}
\newcommand{\nn}[1]{\left\|{#1}\right\|}
\newcommand{\gr}[1]{\bm{#1}}
\def\G{\Gamma}
\def\SH{\mathrm{SH}}
\def\H{\mathrm{H}}
\def\S{\mathcal{S}}
\def\q{\square}
\begin{document}

\title[Globally simple Heffter arrays and \ldots]{Globally simple Heffter arrays and orthogonal cyclic cycle decompositions}

\author{S. Costa}
\address{DICATAM - Sez. Matematica, Universit\`a degli Studi di Brescia, Via
Branze 43, I-25123 Brescia, Italy}
\email{simone.costa@unibs.it}
\thanks{The results of this paper have been presented at HyGraDe 2017}

\author{F. Morini}
\address{Dipartimento di Scienze Matematiche, Fisiche e Informatiche, Universit\`a di Parma,
Parco Area delle Scienze 53/A, I-43124 Parma, Italy}
\email{fiorenza.morini@unipr.it}

\author{A. Pasotti}
\address{DICATAM - Sez. Matematica, Universit\`a degli Studi di Brescia, Via
Branze 43, I-25123 Brescia, Italy}
\email{anita.pasotti@unibs.it}

\author{M.A. Pellegrini}
\address{Dipartimento di Matematica e Fisica, Universit\`a Cattolica del Sacro Cuore, Via Musei 41,
I-25121 Brescia, Italy}
\email{marcoantonio.pellegrini@unicatt.it}

\begin{abstract}
In this paper we introduce a particular class of Heffter arrays, called globally simple Heffter arrays, whose existence
gives at once orthogonal cyclic cycle decompositions of the
complete graph and of the cocktail party graph. In particular we provide explicit
constructions  of such decompositions for cycles of length $k\leq 10$.
Furthermore, starting from our Heffter arrays we also obtain biembeddings of two $k$-cycle decompositions on orientable surfaces.
\end{abstract}

\keywords{Heffter array, orthogonal cyclic cycle decomposition, biembedding}
\subjclass[2010]{05B20; 05B30}

\maketitle
\section{Introduction}

Arrays with particular properties  are  not only interesting objects \emph{per se} but, in general, they have applications in many areas of mathematics.
For these reasons, there are several types of well-studied arrays, see, for instance, \cite{DK,DS, H1,H2,HSS,KSW}.
Here we consider Heffter arrays, introduced by  Archdeacon in \cite{A}:

\begin{defi}\label{def:H}
An \emph{integer Heffter array} $\H(m,n; h,k)$ is an $m \times n$ partially filled array such that:
\begin{itemize}
\item[(\rm{a})] its entries belong to the set $\{\pm 1, \pm 2, \ldots, \pm nk\}\subset \Z$;
\item[(\rm{b})] no two entries agree in absolute value;
\item[(\rm{c})] each row contains $h$ filled cells and each column contains $k$ filled cells;
\item[(\rm{d})] the elements in every row and column sum to $0$.
\end{itemize}
\end{defi}

Trivial necessary conditions for the existence of an $\H(m,n; h,k)$ are $mh=nk$, $3\leq h\leq n$ and $3\leq k
\leq m$.
In this paper we will concentrate on \emph{square} integer Heffter arrays, namely on the case $m=n$ which implies $h=k$.
An $\H(n,n;k,k)$ will be simply denoted by $\H(n;k)$.

\begin{ex}\label{ex:H87}
The following array is an example of an $\H(8;7)$:
\begin{center}
\begin{footnotesize}
$\begin{array}{|r|r|r|r|r|r|r|r|}\hline
8 & 16 &   & 25 & -27 & -29 & 31 & -24 \\
\hline  -17 & -6 & 23 & -28 & 26 & 32 & -30 &   \\
\hline  39 & -10 & -5 & 15 &   & 33 & -35 & -37 \\
\hline  -38 &   & -18 & 7 & 11 & -36 & 34 & 40 \\
\hline  -43 & -45 & 47 & -22 & 3 & 19 &   & 41 \\
\hline  42 & 48 & -46 &   & -14 & 2 & 12 & -44 \\
\hline    & 49 & -51 & -53 & 55 & -21 & 1 & 20 \\
\hline  9 & -52 & 50 & 56 & -54 &   & -13 & 4  \\\hline
\end{array}$
\end{footnotesize}
\end{center}
\end{ex}
The existence problem of \emph{square integer} Heffter arrays has been completely solved in \cite{ADDY,DW}, where the authors
proved the following theorem.

\begin{thm}\label{th:integer}
There exists an $\H(n;k)$ if and only if
$$3\leq k\leq n\quad \textrm{and}\quad nk\equiv 0,3\pmod{4}.$$
\end{thm}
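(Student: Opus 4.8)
The plan is to prove necessity by a short parity argument and to establish sufficiency by an explicit construction organised into a few congruence cases.

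\emph{Necessity.} The inequalities $3\le k\le n$ are exactly the trivial necessary conditions recalled after Definition~\ref{def:H}, specialised to $m=n$ and $h=k$: a row with $k$ filled cells among $n$ columns forces $k\le n$, and condition~(d) forces $k\ge 3$ since neither a single nonzero integer nor two integers with distinct absolute values can sum to $0$. For the congruence, note that by~(a) and~(b) the absolute values of the $nk$ entries of an $\H(n;k)$ are precisely $1,2,\dots,nk$, while summing condition~(d) over all rows shows that the signed sum $\sigma$ of all entries is $0$. Flipping the sign of an entry $\pm i$ changes $\sigma$ by $2i$, so $\sigma\equiv\sum_{i=1}^{nk} i=\binom{nk+1}{2}\pmod 2$; hence $nk(nk+1)/2$ must be even, i.e. $nk(nk+1)\equiv 0\pmod 4$, and inspecting the four residues of $nk$ modulo $4$ shows this holds exactly when $nk\equiv 0$ or $3\pmod 4$.

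\emph{Sufficiency.} Here one must actually build an $\H(n;k)$ for every admissible pair, and I would follow the approach of \cite{ADDY,DW}. The idea is to place the symbols $\pm1,\pm2,\dots,\pm nk$ on the $k$ cyclically consecutive diagonals of the $n\times n$ array and, on each diagonal, to choose the cyclic sequence of symbols together with their signs so that all $n$ row sums and all $n$ column sums vanish. The key technical device is a supply of small ``shiftable'' filled sub-arrays whose row and column sums are already $0$ and whose entries may be uniformly increased by a fixed constant: superposing several translated and relabelled copies of such blocks yields the whole array while the absolute values remain pairwise distinct and exhaust $\{1,\dots,nk\}$. One then splits into the cases $nk\equiv3\pmod4$ (so $n$ and $k$ are both odd), $n\equiv0\pmod4$, $k\equiv0\pmod4$, and $n\equiv k\equiv2\pmod4$, each handled by its own family of base blocks, with a handful of small pairs $(n,k)$ — near the boundary $k=n$ or for $k\in\{3,4,5\}$ — treated by ad hoc arrays verified directly.

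The main obstacle is precisely this bookkeeping: arranging the diagonal fillings so that conditions~(b) and~(d) hold \emph{simultaneously and uniformly} across all residue classes modulo $4$, and patching the degenerate small cases where the generic patterns break down. Once suitable base blocks are fixed, checking that no two entries agree in absolute value and that every line sums to zero reduces to routine arithmetic on arithmetic progressions.
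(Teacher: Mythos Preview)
Your necessity argument is correct and standard. For sufficiency, note that the paper does not actually prove Theorem~\ref{th:integer}: it is quoted as a result of \cite{ADDY,DW} with no argument given. Your proposal for sufficiency is likewise not a proof but an outline of the strategy of those references (cyclically $k$-diagonal placement, shiftable blocks, case split on $nk\bmod 4$, ad hoc small cases), which is an accurate high-level summary of what is done there. So your treatment is consistent with the paper's --- the paper simply cites, you cite and sketch --- but be aware that what you have written is a plan, not a proof: the ``routine arithmetic'' you allude to is in fact several pages of explicit block constructions in \cite{ADDY,DW}, and nothing short of reproducing those (or an equivalent family of constructions) would constitute an actual proof of the sufficiency direction.
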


If, in Definition \ref{def:H}, condition (d) is replaced by the following one:
\begin{itemize}
\item[(\rm{d}$'$)] the elements in every row and column sum to $0$ modulo $2nk+1$,
\end{itemize}
one speaks of a non integer Heffter array, see \cite{A}. The existence problem of \emph{non integer} Heffter arrays has not been completely solved
yet:  some partial results can be found in \cite{ABD,DM}.

In \cite{A}, Archdeacon showed that such arrays can be used to construct cycle decompositions of the complete graph 
if they satisfy an additional condition, called simplicity, which we introduce next.
Let $A$ be a finite subset of $\Z\setminus\{0\}$. Given an ordering $\omega=(a_1,a_2,\ldots,a_k)$ of the elements in $A$, let $s_i=\sum_{j=1}^i a_j$
be the $i$-th partial sum of $A$.
We say that the ordering $\omega$ is \emph{simple modulo $v$} if $s_b\neq s_c\pmod{v}$ for all $1\leq b <  c\leq k$ or,
equivalently,
if there is no proper subsequence of $\omega$ that sums to $0$ modulo $v$.

With a little abuse of notation, we will identify each row (column) of an $\H(n;k)$ with the set of size $k$
whose elements are the entries of the nonempty cells of such a row (column). For instance we can view the first
row of the $\H(8;7)$ of Example \ref{ex:H87} as the set $R_1=\{8,16,25,-27,-29,31,-24\}$.

\begin{defi}
An $\H(n;k)$ is said to be \emph{simple} if each row and each column admits a simple ordering modulo $2nk+1$.
\end{defi}

Since each row and each column of an $\H(n;k)$ is such that $s_k=0$ and does not contain $0$ or subsets of the form
$\{x,-x\}$,
it is easy to see that every $\H(n;k)$ with $k\leq 5$ is simple.

\begin{ex}\label{ex:simple}
The $\H(8;7)$ of Example \ref{ex:H87} is simple. To verify this property we need to provide an ordering for each row and each column which is simple modulo $113$.
One can check that the $\omega_i$'s are simple ordering of the rows and the  $\nu_i$'s are simple ordering of the columns:

\begin{center}
\begin{footnotesize}
$\begin{array}{lcl}
\omega_1= ( 8, 25, 16, -27, -29, 31, -24); & \quad &\nu_1= (8,39,-17,-38,-43,42,9  );\\
\omega_2= (-17, -6, -28, 23, 26, 32, -30); & \quad &\nu_2= ( 16,-6,-45,-10,48,49,-52 );\\
\omega_3= (39, -10, -5, 33, 15, -35, -37); & \quad & \nu_3= ( 23,-5,47,-18,-46,-51,50 );\\
\omega_4= (-38, -18, 7, -36, 11, 34,  40); & \quad &\nu_4= (25,-28,15,7,-53,-22,56 );\\
\omega_5= (-43, -45, 47, -22, 3, 41,  19); & \quad &\nu_5= (-27,26,11,3,55,-14,-54 );\\
\omega_6= ( 42, 48, -46, -14, 2, -44, 12); & \quad &\nu_6= ( -21,32,33,-36,19,2,-29);\\
\omega_7= (20, -51, -53, 55, -21, 1, 49);& \quad  &\nu_7= (-13,-30,-35,34,12,1,31);\\
\omega_8= (-52, 9, 50, 56, -54, -13, 4); & \quad &\nu_8= (-37,-24,40,41,-44,20,4).
\end{array}$
\end{footnotesize}
\end{center}
\end{ex}

Notice that the \emph{natural ordering} of each row (from left to right)  and of each column  (from top to bottom) of the above $\H(8;7)$ is not
simple.
Clearly, larger $n$ and $k$ are more difficult (and tedious) is to provide
explicit simple orderings for rows and columns of an $\H(n;k)$.
Hence, we think it is reasonable to look for Heffter arrays which are simple with respect to the natural ordering
of rows and columns.
So we propose the following new definition.

\begin{defi}
We say that an Heffter array $\H(n; k)$ is \emph{globally simple} if each row and each column is simple
modulo $2nk+1$ with respect to their natural ordering.
\end{defi}

Clearly, the concepts of simple and globally simple Heffter array can be extended to the rectangular case
$\H(m,n; h,k)$.
A globally simple $\H(m,n; h,k)$ will be denoted by $\SH(m,n; h,k)$ and a square globally simple $\H(n;k)$ will be
denoted by $\SH(n;k)$.

\begin{ex}
The following is a globally simple $\SH(8;7)$:
\begin{center}
\begin{footnotesize}
\begin{tabular}{|r|r|r|r|r|r|r|r|}
\hline $4$ & $35$ & $-45$ & $46$ & $ $ & $20$ & $-36$ & $-24$ \\
\hline $48$ & $-5$ & $23$ & $-47$ & $-18$ & $ $ & $37$ & $-38$ \\
\hline $-32$ & $-10$ & $-6$ & $31$ & $-41$ & $42$ & $ $ & $16$ \\
\hline $33$ & $-34$ & $44$ & $3$ & $11$ & $-43$ & $-14$ & $ $ \\
\hline $ $ & $15$ & $-28$ & $-22$ & $7$ & $27$ & $-53$ & $54$ \\
\hline $-13$ & $ $ & $29$ & $-30$ & $56$ & $1$ & $12$ & $-55$ \\
\hline $-49$ & $50$ & $ $ & $19$ & $-40$ & $-21$ & $2$ & $39$ \\
\hline $9$ & $-51$ & $-17$ & $ $ & $25$ & $-26$ & $52$ & $8$ \\\hline
\end{tabular}
\end{footnotesize}
\end{center}
\end{ex}

\begin{rem}
We point out that the Heffter arrays constructed in \cite{ADDY, DW}, in general, are not globally simple. Indeed, the $\H(8;7)$ of Example \ref{ex:H87} was obtained according to \cite[Theorem 3.12]{ADDY}.
Furthermore, easy modifications of the existing constructions seem not to produce globally simple arrays, except for $\SH(n;6)$: these arrays (see Proposition \ref{prop:6}) are obtained switching the first two columns of the matrices given in \cite[Theorem 2.1]{ADDY}.
\end{rem}

In \cite{A} many applications of (simple) Heffter arrays are shown, in particular the relationship with orthogonal cycle decompositions of the complete graph
and with biembeddings of two cycle decompositions on an orientable surface.
Here, in Section \ref{sec2} we show that globally simple Heffter arrays are related not only to orthogonal cyclic cycle decompositions
of the complete graph, but also of the cocktail party graph.
We note that very little is known about orthogonal decompositions; as far as we know, only
asymptotic results have been obtained, see \cite{CY1, CY2}.
In Section \ref{sec3} we investigate the connection between Heffter arrays and biembeddings of two cycle decompositions on an orientable surface.
Then in Section \ref{sec4} we present direct constructions of $\SH(n;k)$ for $6\leq k\leq 10$ and for any admissible $n$.
Combining the results of Sections \ref{sec2} and \ref{sec4} we obtain the following theorem.

\begin{thm}\label{th:main}
Let $3\leq k\leq 10$.
Then there exists a pair of orthogonal cyclic $k$-cycle decompositions of the complete graph of order $2nk+1$ and of the cocktail party graph of order $2nk+2$ for any positive integer $n$  such that  $nk \equiv 0,3 \pmod 4$.
\end{thm}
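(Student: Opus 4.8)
The plan is to derive Theorem \ref{th:main} by combining three ingredients: the connection between globally simple Heffter arrays and orthogonal cyclic cycle decompositions established in Section \ref{sec2}, the direct constructions of $\SH(n;k)$ for $6 \le k \le 10$ given in Section \ref{sec4}, and the treatment of the small cases $k \le 5$. First I would recall the main result of Section \ref{sec2}: an $\SH(n;k)$ yields simultaneously a cyclic $k$-cycle decomposition of $K_{2nk+1}$ and one of the cocktail party graph $K_{2nk+2} - I$ of order $2nk+2$, and that these two decompositions are orthogonal to each other, because the rows of the array govern one decomposition while the columns govern the other, and the fact that each entry lies in exactly one row and one column forces any row-cycle and any column-cycle to share at most one edge. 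Thus the entire theorem reduces to establishing the existence of a globally simple square Heffter array $\SH(n;k)$ for every admissible pair $(n,k)$ with $3 \le k \le 10$.

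Next I would dispose of the small values of $k$. For $k \le 5$, the excerpt already observes that every $\H(n;k)$ is automatically simple (since each row and column sums to $0$, contains no $0$, and contains no pair $\{x,-x\}$, so no proper subsequence can vanish modulo $2nk+1$); in fact the same argument shows that the natural ordering is simple, so every $\H(n;k)$ with $k \le 5$ is globally simple. Combined with Theorem \ref{th:integer}, which guarantees an $\H(n;k)$ exactly when $3 \le k \le n$ and $nk \equiv 0,3 \pmod 4$, this settles $k \in \{3,4,5\}$. For $k = 6$, I would invoke Proposition \ref{prop:6}, obtained by switching the first two columns of the arrays from \cite[Theorem 2.1]{ADDY}. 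For $7 \le k \le 10$, the required $\SH(n;k)$ are exactly those produced by the direct constructions of Section \ref{sec4}, which are designed to cover all admissible $n$, i.e.\ all $n \ge k$ with $nk \equiv 0,3 \pmod 4$.

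Finally I would check that the arithmetic side conditions match. The necessary condition for an $\H(n;k)$ to exist is $nk \equiv 0,3 \pmod 4$ together with $3 \le k \le n$; the orders appearing in the statement, $2nk+1$ for the complete graph and $2nk+2$ for the cocktail party graph, are precisely the parameters attached to an $\SH(n;k)$ by the Section \ref{sec2} correspondence (the modulus being $2nk+1$). One subtlety is the hypothesis in the theorem statement, which only asks for a positive integer $n$ with $nk \equiv 0,3 \pmod 4$ and does not explicitly impose $n \ge k$; I would remark that when $n < k$ no $k$-cycle decomposition of the relevant type is being claimed because the Heffter array cannot exist, or alternatively point out that the constructions of Section \ref{sec4} are stated for all admissible $n$ in the sense of Theorem \ref{th:integer}, so the condition $n \ge k$ is implicitly understood. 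I expect the only real work to be bookkeeping: verifying that Sections \ref{sec2} and \ref{sec4} jointly cover every residue class and every range of $n$ for each $k$ from $6$ to $10$, with no admissible pair left out and no spurious degenerate case slipping in. The genuine mathematical obstacle — constructing globally simple arrays whose natural row and column orderings avoid all partial-sum collisions — has already been absorbed into the cited constructions, so the proof here is essentially an assembly argument.
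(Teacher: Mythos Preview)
Your overall assembly strategy matches the paper's, but there are two genuine gaps.

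First, you conflate $\SH(n;k)$ with $\SH^*(n;k)$. A globally simple Heffter array (simplicity modulo $2nk+1$) gives you the orthogonal decompositions of $K_{2nk+1}$, but for the cocktail party graph $K_{2nk+2}-I$ you need the partial sums of each row and column to be distinct modulo $2nk+2$ as well; this is condition $(\ast)$, and the relevant object is $\SH^*(n;k)$, as in Proposition~\ref{pr:ccp}. Your remark ``the modulus being $2nk+1$'' shows you have missed this. The constructions in Section~\ref{sec4} are explicitly $\SH^*$ constructions (this is why each proof checks simplicity modulo both $2nk+1$ and $2nk+2$), and the observation at the start of Section~\ref{sec4} that any $\H(n;k)$ with $k\le 5$ is automatically an $\SH^*(n;k)$ also covers both moduli. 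You need to invoke these stronger statements.

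Second, your handling of $1\le n<k$ is wrong. The theorem as stated does claim the existence of orthogonal cyclic $k$-cycle decompositions for every positive $n$ with $nk\equiv 0,3\pmod 4$, including those with $n<k$. Such decompositions are not ruled out merely because no $\H(n;k)$ exists; they can (and do) exist, but must be produced by other means. The paper covers these small cases separately, by computer search starting from the constructions in \cite{BGL,BDF}. Saying that ``no $k$-cycle decomposition of the relevant type is being claimed'' or that ``$n\ge k$ is implicitly understood'' is incorrect and leaves the theorem unproved for those parameters.
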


We have to point out that for $3\leq k\leq 9$, Theorem \ref{th:main} can be obtained starting from the results 
of \cite{ADDY, CMPP, DW}.
But, in that case, if one wants to construct the base cycles for the cycle decompositions of order $2nk+1$ he has to find an ad hoc simple ordering for each row and each column,
then he has to find other simple orderings modulo $2nk+2$.
While here the cycle decompositions (both of the complete graph of order $2nk+1$ and of the cocktail party graph of order $2nk+2$) can be immediately written  starting from the rows and columns of the arrays constructed in Section \ref{sec4}.

It is worth noticing that combining the previous theorem with \cite[Theorem 3.3]{BGL}, a stronger result can be stated regarding cocktail party graphs.

\begin{cor}
Let $3\leq k\leq 10$ and $n\geq 1$.
Then there exists a pair of orthogonal cyclic $k$-cycle decompositions of the cocktail party graph of order $2nk+2$ if and only if  $nk \equiv 0,3 \pmod 4$.
\end{cor}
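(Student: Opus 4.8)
The equivalence splits, as usual, into a sufficiency part and a necessity part, and only one of them requires work. Sufficiency is immediate: if $nk\equiv 0,3\pmod 4$, then Theorem~\ref{th:main} already yields a pair of orthogonal cyclic $k$-cycle decompositions of the cocktail party graph of order $2nk+2$ --- one produces an $\SH(n;k)$ by the explicit constructions of Section~\ref{sec4} and reads the two decompositions off its rows and columns via the correspondence established in Section~\ref{sec2}. So the whole content of the corollary is the \emph{necessity} of the congruence, and for that I would invoke \cite[Theorem~3.3]{BGL}.

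Here is the shape of the necessity argument one should check \cite{BGL} delivers. Suppose the cocktail party graph of order $v=2nk+2$ admits a cyclic $k$-cycle decomposition --- which it certainly does if it admits a pair of orthogonal ones. Realise this graph as the circulant on $\Z_v$ with connection set $\{\pm1,\pm2,\dots,\pm nk\}$, i.e.\ $K_v$ with the antipodal perfect matching (the class of the difference $v/2=nk+1$) removed. If every cycle-orbit under $\Z_v$ is full, an edge count forces exactly $n$ base $k$-cycles, and their $nk$ signed edge-differences realise each of $1,2,\dots,nk$ exactly once; summing the telescoping relations $\sum_i(a_{i+1}-a_i)\equiv 0\pmod v$ over all base cycles gives
\[
\sum_{d=1}^{nk}\varepsilon_d\,d\equiv 0\pmod v,\qquad\text{for some }\varepsilon_d\in\{+1,-1\}.
\]
Since $v$ is even and $\varepsilon_d d\equiv d\pmod 2$, this makes $\tfrac12\,nk(nk+1)$ even, that is $nk(nk+1)\equiv 0\pmod 4$, which holds precisely when $nk\equiv 0$ or $3\pmod 4$.

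The genuinely delicate point, and the reason \cite[Theorem~3.3]{BGL} is needed rather than the naive count above, is the possibility of \emph{short} cycle-orbits: for even $k$ a base cycle could a priori be stabilised by a nontrivial subgroup of $\Z_v$ (for instance by a half-turn $x\mapsto x+(nk+1)$, or by a rotation through $k/2$ positions of the cycle), in which case the base-cycle count, and hence the parity obstruction, must be re-examined. I would therefore structure the write-up as: (i) quote \cite[Theorem~3.3]{BGL} to rule out the offending short orbits (or to absorb them into the count), (ii) run the parity argument sketched above in the remaining case, and (iii) verify that the resulting restriction on the order $v=2nk+2$ is exactly $nk\equiv 0,3\pmod 4$. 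Combined with the sufficiency furnished by Theorem~\ref{th:main}, this gives the equivalence for every $n\ge1$ with $3\le k\le 10$.
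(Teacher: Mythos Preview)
Your proposal is correct and follows exactly the paper's approach: sufficiency from Theorem~\ref{th:main}, necessity from \cite[Theorem~3.3]{BGL}; your additional sketch of the parity obstruction behind the latter is accurate and more detailed than anything the paper provides. One small inaccuracy in your parenthetical: for $1\le n<k$ there is no $\SH(n;k)$ (an $\H(n;k)$ requires $n\ge k$), and the paper covers those cases separately by computer starting from the constructions in \cite{BGL,BDF}; but since you invoke Theorem~\ref{th:main} as a black box, this does not affect your argument.
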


Finally, combining the results of Sections \ref{sec3} and \ref{sec4} we obtain the following theorem.

\begin{thm}\label{thm:biemb}
There exists a biembedding of the complete graph of order $2nk+1$ and one of the cocktail graph of order $2nk+2$ on 
orientable surfaces such that every face is a $k$-cycle, whenever $k\in \{3,5,7,9\}$, $nk\equiv 3 \pmod 4$ and $n>k$.
\end{thm}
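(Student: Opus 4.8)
The plan is to assemble Theorem \ref{thm:biemb} from three ingredients: the explicit globally simple arrays $\SH(n;k)$ constructed in Section \ref{sec4} for $k\in\{6,7,8,9,10\}$ together with the trivial cases $k\le 5$, the combinatorial criterion relating Heffter arrays to biembeddings that is developed in Section \ref{sec3}, and an elementary number-theoretic reduction of the constraint $nk\equiv 0,3\pmod 4$ to exactly $nk\equiv 3\pmod 4$ in the odd-$k$ setting. So the first step is to specialize $k$ to the odd values $3,5,7,9$; here $k$ is odd, so $nk\equiv 0\pmod 4$ forces $n\equiv 0\pmod 4$ while $nk\equiv 3\pmod 4$ forces $n$ odd, and the statement isolates the latter congruence class. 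The hypothesis $n>k$ is exactly the genuine (non-degenerate) range in which the square Heffter array $\H(n;k)$ exists by Theorem \ref{th:integer}, and in which the direct constructions of Section \ref{sec4} are stated.

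Next I would invoke the biembedding machinery of Section \ref{sec3}. The point of a \emph{globally simple} array is that its natural row orderings and natural column orderings are automatically simple modulo $2nk+1$, hence each row and each column yields, via partial sums, a closed $k$-cycle in the complete graph $K_{2nk+1}$ on $\Z_{2nk+1}$, and the full collection of row-cycles (resp.\ column-cycles) and their translates under $\Z_{2nk+1}$ forms a cyclic $k$-cycle decomposition. Archdeacon's construction (reproved in Section \ref{sec3}) then states that when one also fixes a compatible pair of \emph{orderings}—and for a globally simple array the natural orderings serve simultaneously—the two decompositions can be biembedded on an orientable surface so that the faces are precisely the $k$-cycles of the two decompositions. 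Thus for each admissible $n$ with $n>k$ and $nk\equiv 3\pmod 4$, Section \ref{sec4}'s array $\SH(n;k)$ (or the trivial simple $\H(n;k)$ for $k\le 5$) feeds directly into this theorem and produces the desired biembedding of $K_{2nk+1}$.

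For the cocktail party graph $K_{2nk+2}$ of order $2nk+2$, I would use the parallel statement from Section \ref{sec2}–\ref{sec3}: the same globally simple array, now read modulo $2nk+2$ rather than $2nk+1$, gives a cyclic $k$-cycle decomposition of the cocktail party graph on $\Z_{2nk+1}\cup\{\infty\}$ (equivalently on $\Z_{2nk+2}$ with the subgroup of order $2$ removed as the "missing" perfect matching), and the orientability/face condition is inherited from the same orthogonality and simplicity data that governs the $K_{2nk+1}$ case. The condition $nk\equiv 3\pmod 4$ ensures $2nk+2\equiv 0\pmod 4$... actually $2nk+2 \equiv 8 \equiv 0 \pmod 4$ when $nk\equiv 3$, so the cocktail party graph on that many vertices is $2k$-regular with even order, matching what a $k$-cycle system with a removed $1$-factor requires, and the parity needed for an orientable biembedding is met.

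The main obstacle is verifying that a single array simultaneously carries \emph{all} the structure both constructions demand: global simplicity modulo $2nk+1$ must persist (in the appropriate sense) modulo $2nk+2$, and the two rotation systems induced on the vertices must be shown to be consistent, so that gluing the row-decomposition faces to the column-decomposition faces along shared edges yields a genuine orientable surface rather than a pseudosurface. This is exactly the content of the Section \ref{sec3} lemmas, so once those are in hand the proof of Theorem \ref{thm:biemb} is just the remark that the arrays of Section \ref{sec4} satisfy their hypotheses for every $n>k$ with $nk\equiv 3\pmod 4$ and $k\in\{3,5,7,9\}$; the case analysis over these four values of $k$ is routine given the explicit constructions, and the cases $k\in\{3,5\}$ need no construction at all since every $\H(n;k)$ with $k\le 5$ is globally simple by the observation following the definition of simplicity.
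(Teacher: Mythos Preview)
Your proposal has a genuine gap: you conflate \emph{simple} orderings with \emph{compatible} orderings. Theorem~\ref{Gustin} requires both. Simplicity (mod $2nk+1$ or $2nk+2$) is about the partial sums along each row and column being distinct, and global simplicity indeed gives you that for the natural orderings. But compatibility is a completely separate structural condition: $\omega_r\circ\omega_c$ must be a single cycle of length $nk$ on the set of filled cells. This depends only on the \emph{shape} of the support of the array, not on the entries or their partial sums, and it is certainly not automatic for the natural orderings of an arbitrary globally simple Heffter array. Your sentence ``for a globally simple array the natural orderings serve simultaneously'' is simply false.

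The paper closes this gap by observing that the specific arrays it builds for $k=7$ (Proposition~\ref{SH7_odd}) and $k=9$ (Proposition~\ref{SH9_odd}) are \emph{cyclically $k$-diagonal}, and then applying Propositions~\ref{biembk} and~\ref{biemb7}, which construct compatible orderings for cyclically $k$-diagonal arrays (by taking the natural row ordering but reversing one or four columns; reversal preserves simplicity). For $k=3$ and $k=5$ the paper does not rely on ``every $\H(n;k)$ is globally simple'': it cites the explicit arrays of \cite{ADDY} and \cite{DW} and checks (after a small relabelling for $k=5$) that they too are cyclically $k$-diagonal, so that Proposition~\ref{biembk} applies. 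Your argument never mentions the diagonal structure, never invokes Propositions~\ref{biembk} or~\ref{biemb7}, and therefore never establishes compatibility; without it Theorem~\ref{Gustin} cannot be applied and no biembedding is produced. (Your discussion of the cocktail party case via $\Z_{2nk+1}\cup\{\infty\}$ is also off: the paper works in $\Z_{2nk+2}$ and uses the $\SH^*$ property, i.e.\ simplicity modulo $2nk+2$, together with the same compatible orderings.)
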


\section{Orthogonal cyclic cycle decompositions}\label{sec2}

We first recall some basic definitions about graph decompositions.
Let $\G$ be a graph with $v$ vertices. A $k$-\emph{cycle decomposition} of $\G$ is a set $\mathcal C$ of $k$-cycles
of $\G$
such that each edge of $\G$ belongs to a unique cycle of  $\mathcal C$.
If $\G$ is the complete graph of order $v$, one also speaks of a $k$-cycle system of order $v$.
A  $k$-cycle decomposition of $\G$ is said to be \emph{cyclic} if it admits $\Z_{v}$
as automorphism group acting sharply transitively on the vertices.
We recall the following result.

\begin{prop}
Let $\G$ be a graph with $v$ vertices. A $k$-cycle decomposition $\mathcal{C}$ of $\G$ is sharply vertex-transitive under $\Z_v$ if and only if,
up to isomorphisms, the following conditions hold:
\begin{itemize}
\item the set of vertices of $\G$ is $\Z_v$;
\item for all $C=(c_1,c_2,\ldots,c_{k})\in \mathcal{C}$,  $C+1:=(c_1+1,c_2+1,\ldots,c_{k}+1)\in \mathcal{C}$.
\end{itemize}
\end{prop}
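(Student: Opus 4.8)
The plan is to prove the two implications separately, but the real content is almost entirely a translation between the two languages (cyclic cycle decomposition versus labelled cycle decomposition with a shift-invariance property), so I would organize the proof around making that dictionary precise. First I would fix notation: assume $\G$ has vertex set $\Z_v$ and that $\mathcal{C}$ is a $k$-cycle decomposition that is sharply vertex-transitive under $\Z_v$, where $\Z_v$ acts by the translations $x\mapsto x+1$ (this is the ``up to isomorphism'' reduction: any sharply transitive action of $\Z_v$ on a $v$-element vertex set can be conjugated by the bijection $\G\to\Z_v$ sending the orbit of a chosen base vertex to $0,1,\dots,v-1$ so that the generator acts as $+1$). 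With this identification in place, the automorphism ``$+1$'' literally sends a cycle $C=(c_1,\dots,c_k)$ to $C+1=(c_1+1,\dots,c_k+1)$, so the condition ``$\mathcal{C}$ is invariant under the generator'' is exactly ``$C+1\in\mathcal{C}$ for all $C\in\mathcal{C}$''; since $+1$ generates $\Z_v$, invariance under the generator is equivalent to invariance under the whole group.

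For the forward direction I would argue: if $\mathcal{C}$ is sharply vertex-transitive under $\Z_v$, then after the above normalization the two displayed conditions hold by construction, the second being just the statement that $\mathcal{C}$ is closed under adding $1$. For the converse, suppose the vertex set is $\Z_v$ and $C+1\in\mathcal{C}$ whenever $C\in\mathcal{C}$. Then iterating, $C+t\in\mathcal{C}$ for every $t\in\Z_v$, so the map $\varphi_t:x\mapsto x+t$ carries cycles of $\mathcal{C}$ to cycles of $\mathcal{C}$ and hence is an automorphism of the decomposition; the assignment $t\mapsto\varphi_t$ is an injective homomorphism $\Z_v\to\mathrm{Aut}(\mathcal{C})$, and on vertices the action $t\cdot x=x+t$ is clearly sharply transitive. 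Thus $\mathcal{C}$ admits $\Z_v$ as an automorphism group acting sharply transitively on vertices, which is the definition of a cyclic decomposition.

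The only genuine subtlety — and the step I would treat most carefully — is the ``up to isomorphisms'' clause: one must check that every cyclic $k$-cycle decomposition is isomorphic to one of the normalized form, and conversely that the normalized form is genuinely cyclic. This is where one uses that a sharply transitive action of the cyclic group $\Z_v$ on a set of size $v$ is, as a $\Z_v$-set, isomorphic to $\Z_v$ acting on itself by translation; transporting the decomposition along this $\Z_v$-equivariant bijection produces a decomposition on the vertex set $\Z_v$ for which the generator acts as $+1$, and graph/decomposition isomorphisms preserve the property of being a $k$-cycle decomposition. Once this identification is made the remaining verifications are the routine bookkeeping sketched above. I do not expect any real obstacle here; the proposition is essentially a reformulation lemma whose purpose is to license us, in the rest of the paper, to describe cyclic decompositions by listing a set of \emph{base cycles} (orbit representatives) on the vertex set $\Z_v$.
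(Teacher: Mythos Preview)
Your proposal is correct. Note, however, that the paper does not actually prove this proposition: it is introduced with ``We recall the following result'' and stated without proof, as a standard background fact about cyclic decompositions. Your argument is the expected one --- identify a sharply transitive $\Z_v$-set with $\Z_v$ acting on itself by translation, then observe that invariance under the generator $+1$ is equivalent to invariance under the whole group --- and it correctly handles the only real content, namely the ``up to isomorphisms'' normalization. So there is nothing to compare against; you have supplied the routine verification that the paper elides.
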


Clearly, to describe a cyclic  $k$-cycle decomposition it is sufficient to exhibit a complete system $\mathcal{B}$
of representatives for the orbits of $\mathcal{C}$ under the action of $\Z_v$. The elements of $\mathcal{B}$
are called \emph{base cycles} of $\mathcal{C}$.

Here we are interested in the cases in which $\Gamma$ is either the complete graph $K_v$ whose vertex-set is $\Z_v$ or
the cocktail party graph $K_{2t}-I$, namely the complete graph $K_{2t}$ minus the $1$-factor $I$ whose edges are $[0,t],[1,t+1],[2,t+2],\ldots,[t-1,2t-1]$.
The problem of finding necessary and sufficient conditions for cyclic $k$-cycle decompositions of $K_v$ and $K_{2t}-I$ has attracted much attention (see, for instance, \cite{BDF,V,WF} and \cite{BGL,BR,JM2008,JM2017}, respectively).
One of most efficient tools applied for solving this problem is the \emph{difference method}.

\begin{defi}
Let  $C=(c_1,c_2,\ldots,c_{k})$ be a $k$-cycle with
vertices in $\Z_v$.
The multiset
$$\Delta C = \{\pm(c_{h+1}-c_{h})\mid  1\leq h \leq k\},$$
where   the subscripts are taken modulo $k$,
is called the \emph{list of differences} from $C$.
\end{defi}

More generally, given a set $\mathcal{B}$ of $k$-cycles with vertices
in $\Z_v$, by $\Delta \mathcal{B}$ one means the union (counting
multiplicities) of all multisets $\Delta C$,
where  $C\in \mathcal{B}$.

\begin{thm}\label{thm:basecycles}
Let $\mathcal{B}$ be a set of $k$-cycles with vertices in $\Z_{v}$.
\begin{itemize}
\item[(1)] If $\Delta\mathcal{B}=\Z_v\setminus\{0\}$ then $\mathcal{B}$ is a set of base cycles of
a cyclic $k$-cycle decomposition of $K_v$.
\item[(2)] If $v=2t$ and $\Delta\mathcal{B}=\Z_{2t}\setminus\{0,t\}$ then $\mathcal{B}$ is a set of base cycles of
a cyclic $k$-cycle decomposition of $K_{2t}-I$.
\end{itemize}
\end{thm}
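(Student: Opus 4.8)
The plan is to prove Theorem~\ref{thm:basecycles} directly from the difference method, treating the two cases in parallel. First I would fix a set $\mathcal{B}$ of $k$-cycles with vertices in $\Z_v$ and consider the $\Z_v$-orbit $\mathcal{C}=\{C+g\mid C\in\mathcal{B},\ g\in\Z_v\}$. The key observation is that an edge $[x,y]$ of $K_v$ (or of $K_{2t}-I$) lies in the translate $C+g$ of a base cycle $C=(c_1,\ldots,c_k)$ precisely when $\{x-g,y-g\}=\{c_h,c_{h+1}\}$ for some $h$, i.e.\ when $y-x=\pm(c_{h+1}-c_h)$ and $g$ is determined accordingly. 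Thus the number of pairs $(C,g)$ for which $[x,y]\in C+g$ equals the multiplicity of the difference $\pm(y-x)$ in $\Delta\mathcal{B}$ (being a bit careful that $\Delta C$ is defined as a multiset with $2k$ entries, listing each $c_{h+1}-c_h$ together with its negative, so a difference $d$ and $-d$ are both recorded). Hence the condition ``each edge of the host graph lies in exactly one member of $\mathcal{C}$'' is equivalent to ``every nonzero element $d$ of $\Z_v$ that is a legitimate edge-difference of the host graph occurs exactly once in $\Delta\mathcal{B}$''.

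With this dictionary in hand, part~(1) is immediate: the edge-set of $K_v$ with vertex-set $\Z_v$ is exactly $\{[x,y]: y-x\in\Z_v\setminus\{0\}\}$, every nonzero difference being the difference of some edge, so requiring $\Delta\mathcal{B}=\Z_v\setminus\{0\}$ (as a multiset, each element with multiplicity one) forces each edge into exactly one translate of exactly one base cycle, and $\mathcal{C}$ is a cyclic $k$-cycle decomposition of $K_v$ by the Proposition characterising sharply vertex-transitive decompositions. For part~(2), the only change is the host graph: $K_{2t}-I$ has edge-set $\{[x,y]: y-x\in\Z_{2t}\setminus\{0,t\}\}$, since the removed $1$-factor $I$ consists exactly of the pairs $[i,i+t]$, i.e.\ the edges of difference $t$. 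So the decomposition condition becomes $\Delta\mathcal{B}=\Z_{2t}\setminus\{0,t\}$ with each element of multiplicity one, which is the stated hypothesis.

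The one genuine subtlety — the step I would be most careful about — is the bookkeeping when a base cycle contributes a difference and its negative to the same edge, or when two edges of a single cycle give the same difference. Concretely, one must check that a $k$-cycle $C$ covers each of the edges $[c_h+g,c_{h+1}+g]$ once as $h$ and $g$ range appropriately, and that the map $(h,g)\mapsto$ (the covered edge) is a bijection onto the multiset of ``difference-$d$'' edges; this is where the convention that $\Delta C$ is a $2k$-element multiset (recording $c_{h+1}-c_h$ and $c_h-c_{h+1}$ separately) exactly matches the fact that a single undirected edge $[x,y]$ of difference $d=y-x$ also has difference $-d$, so no double counting occurs. I would spell this out once and note that the two parts then differ only in replacing $\Z_v\setminus\{0\}$ by $\Z_{2t}\setminus\{0,t\}$. (It is also worth remarking that under these hypotheses $\mathcal{B}$ automatically consists of cycles with pairwise distinct orbits and each orbit has full length $v$, since a repeated vertex or a short orbit would force a repeated difference; but for the statement as given — $\mathcal{B}$ \emph{is} a set of base cycles of a decomposition — the difference count is all that is needed.)
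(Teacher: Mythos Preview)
The paper states this theorem without proof, treating it as a standard consequence of the difference method for cyclic decompositions. Your argument is correct and is exactly the classical one: you set up the bijection between occurrences of an edge $[x,y]$ in translates $C+g$ and occurrences of $\pm(y-x)$ in $\Delta\mathcal{B}$, and then read off the two cases from the difference sets of $K_v$ and $K_{2t}-I$. Your closing remark is also on point: the hypothesis that each nonzero (respectively, non-zero and non-$t$) difference occurs with multiplicity one automatically rules out short orbits and translate-equivalent base cycles, so $\mathcal{B}$ genuinely is a transversal of the $\Z_v$-orbits on $\mathcal{C}$.
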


Here, we are interested in constructing pairs of orthogonal $k$-cycle decompositions according to the following definition.

\begin{defi}
Two $k$-cycle decompositions $\mathcal{C}$ and $\mathcal{C}'$ of a graph $\G$ are said to be \emph{orthogonal} if for any cycle $C\in \mathcal{C}$
and  any cycle $C'\in \mathcal{C}'$, $C$ intersects $C'$ in at most one edge.
\end{defi}

\noindent Clearly, the same definition can be given for two arbitrary graph decompositions, see \cite{AHL}.

Starting from a simple $\H(n;k)$ it is possible to construct two orthogonal cyclic
$k$-cycle  decompositions of $K_{2nk+1}$, see \cite[Proposition 2.1]{A}.
Firstly, we have to find a simple ordering modulo $2nk+1$ for each row and each column. Then starting from
the simple orderings
of the rows we can construct a set $\mathcal{B}$ of base cycles of a cyclic $k$-cycle decomposition $\mathcal{C}$ of  $K_{2nk+1}$.
The vertices of the $i$-th cycle of $\mathcal{B}$ are the partial sums modulo $2nk+1$ of the $i$-th row of $\H(n;k)$.
Analogously, we can obtain a set of base cycles $\mathcal{B}'$ of another cyclic $k$-cycle decomposition $\mathcal{C}'$ of  $K_{2nk+1}$
starting from the simple orderings of the columns. The decompositions $\mathcal C$ and $\mathcal{C}'$ are orthogonal.

\begin{ex}
Let $H$ be the $\H(8;7)$ of Example \ref{ex:H87} and consider the simple orderings $\omega_i$'s and $\nu_i$'s given in
Example \ref{ex:simple}.
By the partial sums of the $\omega_i$'s ($\nu_i$'s, respectively) in $\Z_{113}$ we obtain the cycles $C_i$'s ($C'_i$'s, respectively):
\begin{center}
\begin{footnotesize}
$\begin{array}{lcl}
C_1= ( 8, 33, 49, 22, -7, 24, 0); & \quad &C'_1= (8,47,30,-8,-51,-9,0);\\
C_2= (-17, -23, -51, -28, -2, 30, 0); & \quad &C'_2= ( 16,10,-35,-45,3,52,0 );\\
C_3= (39, 29, 24, 57, 72, 37, 0); & \quad &C'_3= ( 23,18,65,47,1,-50,0);\\
C_4= (-38, -56, -49, -85, -74, -40, 0); & \quad &C'_4= (25,-3,12,19,-34,-56,0);\\
C_5= (-43, -88, -41, -63, -60, -19,  0); & \quad &C'_5= (-27,-1,10,13,68,54,0);\\
C_6= ( 42, 90, 44, 30, 32, -12, 0); & \quad &C'_6= ( -21,11,44,8,27,29,0);\\
C_7= (20, -31, -84, -29, -50, -49, 0);& \quad  &C'_7= (-13,-43,-78,-44,-32,-31,0);\\
C_8= (-52, -43, 7, 63, 9, -4, 0); & \quad &C'_8= (-37,-61,-21,20,-24,-4,0).
\end{array}$
\end{footnotesize}
\end{center}
Then $\mathcal{B}=\{C_1,\ldots,C_8\}$ and $\mathcal{B}'=\{C'_1,\ldots,C'_8\}$ are two sets of base cycles
of a pair of orthogonal cyclic $7$-cycle decompositions of  $K_{113}$.
\end{ex}

Although the existence of a square integer $\H(n;k)$ has been completely established, the simplicity of these arrays has
not been considered. In \cite{CMPP}, we proposed the following conjecture whose validity would imply that
any Heffter array is simple (other related and interesting conjectures can be found in \cite{ADMS}).

\begin{conj}
Let $(G,+)$ be an abelian group. Let $A$ be a finite subset of $G\setminus\{0\}$
such that no $2$-subset $\{x,-x\}$ is contained in $A$ and
with the property that $\sum_{a\in A} a=0$.
Then there exists a simple ordering of the elements of $A$.
\end{conj}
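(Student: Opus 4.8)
The plan is to exploit the two hypotheses---finiteness of $A$ and the absence of any pair $\{x,-x\}$---while trading the rigid zero-sum constraint for a more flexible ``nonzero total'' condition under which an induction has a chance to close. First I would replace $G$ by the subgroup $\langle A\rangle$ it generates: every partial sum of every ordering of $A$ lies in $\langle A\rangle$, so being simple is an intrinsic property of this subgroup. Being finitely generated and abelian, $\langle A\rangle\cong \Z^r\oplus T$ with $T$ finite, and I would treat the torsion-free part $\Z^r$ and the finite part $T$ by different tools before attempting to merge them.

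The key preliminary step is a reduction of the (zero-sum) conjecture to the following Alspach-type statement $(\mathrm{S})$: \emph{every finite $B\subseteq G\setminus\{0\}$ that contains no pair $\{x,-x\}$ and satisfies $\sum_{b\in B}b\neq 0$ admits an ordering whose partial sums are pairwise distinct and all nonzero.} Granting $(\mathrm{S})$, fix any $a\in A$ and set $B=A\setminus\{a\}$; then $B$ still contains no pair $\{x,-x\}$ and $\sum_{b\in B}b=-a\neq 0$, so $(\mathrm{S})$ supplies an ordering $b_1,\ldots,b_{k-1}$ of $B$ with distinct nonzero partial sums $t_1,\ldots,t_{k-1}$, where necessarily $t_{k-1}=-a$. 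Appending $a$ gives an ordering of $A$ with partial sums $t_1,\ldots,t_{k-1},0$: the first $k-1$ are distinct and nonzero, and the last equals $0$ and so differs from each of them. Hence $A$ is simple, and it remains to establish $(\mathrm{S})$.

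For the torsion-free case I would pass to an ordered group. Choosing a linear functional $\phi\colon \Z^r\to\mathbb{R}$ that is injective on the (finite) set of all subset sums of $B$, any ordering of $\phi(B)\subset\mathbb{R}$ with distinct nonzero partial sums pulls back to one of $B$. Over $\mathbb{R}$ the absence of a pair $\{x,-x\}$ makes the absolute values of the elements pairwise distinct, and the existence of an ordering with distinct nonzero partial sums then follows from a standard extremal argument on the reals; I expect this part to be essentially routine. The genuinely difficult case is the finite group $T$. Here I would attempt an induction on $|B|$, peeling off a last element $b$ for which $\sum(B\setminus\{b\})\neq 0$ and applying the inductive hypothesis to $B\setminus\{b\}$; for a field $\Z_p$ this could be supported by the polynomial method or by character-sum estimates bounding the number of orderings that create a forbidden coincidence, with a passage to general finite abelian groups through their $p$-primary decomposition.

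The main obstacle is precisely the closing of this induction in the finite (and mixed) case. When a last element $b$ is removed, one must guarantee not only $\sum(B\setminus\{b\})\neq 0$ but also that the total $\sum_{b\in B}b$ does not coincide with any partial sum of the ordering produced inductively for $B\setminus\{b\}$---and that ordering is not under our control. Controlling such coincidences uniformly in the group $T$ and in $|B|$ is exactly the content of the unresolved Alspach-type problem, and merging the ordered-case construction on $\Z^r$ with the algebraic control needed on $T$ adds a further layer of difficulty. For these reasons I expect a complete proof to require a genuinely new idea rather than a refinement of the reductions above, which is consistent with the statement being posed here as a conjecture.
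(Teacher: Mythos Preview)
The statement you are addressing is a \emph{conjecture} in the paper, not a theorem: the paper offers no proof, only the remark that the authors have verified it elsewhere for $|A|<10$. So there is nothing to compare your attempt against, and you are right to conclude that a full proof is not expected here.

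That said, your reduction is sound and worth recording. The passage from the zero-sum conjecture to the Alspach-type statement $(\mathrm{S})$ is correct: removing any $a\in A$ leaves a set $B$ with $\sum B=-a\neq 0$ and still without $\{x,-x\}$ pairs, and appending $a$ to a good ordering of $B$ yields a simple ordering of $A$. Your treatment of the torsion-free part via a generic linear functional into $\mathbb{R}$ is also reasonable and reduces that case to a known extremal argument. Where you correctly identify the genuine obstruction is the finite (and mixed) case: the inductive step of appending a final element requires that the running total avoid all previously produced partial sums, and you have no mechanism to enforce this. This is precisely the open Alspach problem, so your proposal is an honest outline of why the conjecture is hard rather than a proof, and you say as much.
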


We proved that our conjecture is true for any subset $A$ of size less than $10$.
Our proof is constructive, but given an $\H(n;k)$ it can be long and tedious to find the required $2n$
simple orderings.
This is why we came up with the idea of introducing globally simple Heffter arrays.
Moreover, we will construct globally simple integer Heffter arrays $\SH(n;k)$ which satisfy also the following condition:
\begin{center}
$(\ast)$\quad the natural ordering of each row and column is simple modulo $2nk+2$.
\end{center}
The usefulness of these arrays, which will be denoted by $\SH^{*}(n;k)$, is explained by the following proposition.

\begin{prop}\label{pr:ccp}
If there exists an $\SH^*(n;k)$, then there exist:
\begin{itemize}
\item[(1)] a pair of orthogonal cyclic $k$-cycle decompositions of $K_{2nk+1}$ and
\item[(2)] a  pair of orthogonal cyclic $k$-cycle decompositions of $K_{2nk+2}-I$.
\end{itemize}
\end{prop}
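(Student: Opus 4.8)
The plan is to extract two cycle decompositions from an $\SH^*(n;k)$ exactly as in the simple case, but exploiting that the natural ordering already works modulo both $2nk+1$ and $2nk+2$. First I would set $v=2nk+1$ and, for each of the $n$ rows $R_i$, form the $k$-cycle $C_i$ whose vertices in $\Z_v$ are the partial sums $s_1^{(i)},s_2^{(i)},\ldots,s_k^{(i)}=0$ of $R_i$ taken in its natural left-to-right order; globally simpleness guarantees these $k$ partial sums are pairwise distinct modulo $v$, so $C_i$ is a genuine $k$-cycle. Since the entries of row $i$ are exactly the consecutive differences $\pm(s_{h+1}^{(i)}-s_h^{(i)})$ around $C_i$, we get $\Delta C_i=\{\pm a : a\in R_i\}$, and hence $\Delta\mathcal{B}=\bigcup_i\Delta C_i$ is the multiset $\{\pm a : a\in\{\pm1,\ldots,\pm nk\}\}$ coming from property (a) of Definition~\ref{def:H}, which (using (b)) is precisely $\{\pm1,\pm2,\ldots,\pm nk\}$ with each value appearing once, i.e.\ $\Z_v\setminus\{0\}$. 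By Theorem~\ref{thm:basecycles}(1), $\mathcal{B}=\{C_1,\ldots,C_n\}$ is a set of base cycles of a cyclic $k$-cycle decomposition $\mathcal C$ of $K_{2nk+1}$. Doing the same with the $n$ columns produces $\mathcal{B}'=\{C_1',\ldots,C_n'\}$ and a cyclic $k$-cycle decomposition $\mathcal C'$ of $K_{2nk+1}$; this proves part~(1).

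Next I would prove orthogonality of $\mathcal C$ and $\mathcal C'$, which is the one genuinely combinatorial point. A typical edge of a cycle in $\mathcal C$ (an orbit representative can be translated) has the form $\{s_{h}^{(i)},s_{h+1}^{(i)}\}$; its difference is the row entry in position $(i,h+1)$, so an edge of $\mathcal C$ is determined by the ordered pair (a filled cell of the array, the partial sum reaching its left endpoint). The same edge, viewed in $\mathcal C'$, would have to be $\{s_{g}^{(j)},s_{g+1}^{(j)}\}$ for the corresponding column, with difference equal to the $(j,?)$-entry; since no two cells of a Heffter array share an absolute value, a common difference forces the two cells to be the \emph{same} cell $(i,j)$. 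If two distinct edges were shared by some $C\in\mathcal C$ and some $C'\in\mathcal C'$, translating so one shared edge is fixed, the second shared edge would force a second cell in row $i$ equal in absolute value to a cell in column $j$ — again the same cell $(i,j)$ — contradicting that the two edges were distinct. Hence any $C$ and $C'$ meet in at most one edge; this is the standard argument behind \cite[Proposition~2.1]{A} and I would reproduce it carefully, as it is the main obstacle.

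For part~(2) I would repeat the construction with $v'=2nk+2=2t$, $t=nk$, forming $D_i$ and $D_i'$ from the same natural orderings of rows and columns but now reducing partial sums modulo $v'$. Condition~$(\ast)$ is exactly what guarantees the $k$ partial sums of each row and each column are pairwise distinct modulo $2nk+2$, so each $D_i$, $D_i'$ is again a $k$-cycle. Now $\Delta\{D_1,\ldots,D_n\}=\{\pm1,\pm2,\ldots,\pm nk\}$ reduced modulo $2nk+2$; since $nk=t$ and $t\equiv -t\pmod{2t}$, the values $nk$ and $-(nk)$ both collapse to $t$, while the remaining values $\pm1,\ldots,\pm(nk-1)$ are distinct and nonzero modulo $2t$ — so the list of differences is exactly $\Z_{2t}\setminus\{0,t\}$. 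By Theorem~\ref{thm:basecycles}(2), $\{D_1,\ldots,D_n\}$ is a set of base cycles of a cyclic $k$-cycle decomposition of $K_{2nk+2}-I$, and likewise for the columns. Orthogonality follows by the identical cell-uniqueness argument as before, the only change being the modulus. This yields part~(2) and completes the proof.
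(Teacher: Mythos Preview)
Your overall approach matches the paper's: build base cycles from partial sums of rows and of columns, invoke Theorem~\ref{thm:basecycles}, and deduce orthogonality from the fact that absolute values of entries are pairwise distinct. The orthogonality argument you sketch is in fact more explicit than the paper's, which simply says ``by construction''; your version is fine.

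There is, however, a genuine error in your treatment of part~(2). You set $v'=2nk+2=2t$ and then write $t=nk$; but $t=nk+1$. Consequently your ``collapse'' claim is wrong: modulo $2nk+2$ one has $-(nk)\equiv nk+2$, so $nk$ and $-nk$ are \emph{distinct} residues, not both equal to $t$. The correct reasoning is simpler. The $nk$ filled cells of the array use each absolute value $1,2,\ldots,nk$ exactly once, so the multiset $\bigcup_i \Delta D_i$ equals $\{\pm 1,\pm 2,\ldots,\pm nk\}$; reduced modulo $2nk+2$ these are the $2nk$ distinct nonzero residues $1,\ldots,nk,nk+2,\ldots,2nk+1$, i.e.\ exactly $\Z_{2nk+2}\setminus\{0,nk+1\}$. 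The residue $nk+1=t$ is missing not through any identification but simply because no entry has absolute value $nk+1$. With this correction Theorem~\ref{thm:basecycles}(2) applies and the rest of your argument goes through unchanged.
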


\begin{proof}
(1) follows from previous considerations. (2)  As the natural ordering of each row is simple modulo $2nk+2$,
the partial sums of each row in $\Z_{2nk+2}$  are the vertices of a $k$-cycle. Let $\mathcal{B}$ be the set of the $k$-cycles so constructed from the rows.
Since $\Delta \mathcal{B}=\Z_{2nk+2}\setminus \{ 0,nk+1\}$, in view of Theorem \ref{thm:basecycles}, $\mathcal{B}$ is a set of base cycles of a cyclic $k$-cycle decomposition $\mathcal{C}$ of $K_{2nk+2}-I$. Analogously, starting from the columns,  we can obtain  another cyclic $k$-cycle decomposition $\mathcal{C}'$ of $K_{2nk+2}-I$.
By construction, the decompositions $\mathcal{C}$ and $\mathcal{C}'$ are orthogonal.
\end{proof}

\begin{ex}\label{10.8}
The following is an $\SH^*(10;8)$:
\begin{center}
\begin{footnotesize}
$\begin{array}{|r|r|r|r|r|r|r|r|r|r|}\hline
77 & 80 & -78 & -71 & -70 & -79 &  &  & 69 & 72 \\\hline
 &  & -17 & -20 & -25 & -28 & 26 & 19 & 18 & 27 \\\hline
5 & 8 & 13 & 16 & -14 & -7 & -6 & -15 &  &  \\\hline
34 & 43 &  &  & -33 & -36 & -41 & -44 & 42 & 35 \\\hline
 &  & 21 & 24 & 29 & 32 & -30 & -23 & -22 & -31 \\\hline
58 & 51 & 50 & 59 &  &  & -49 & -52 & -57 & -60 \\\hline
-38 & -47 &  &  & 37 & 40 & 45 & 48 & -46 & -39 \\\hline
-73 & -76 & 74 & 67 & 66 & 75 &  &  & -65 & -68 \\\hline
-62 & -55 & -54 & -63 &  &  & 53 & 56 & 61 & 64 \\\hline
-1 & -4 & -9 & -12 & 10 & 3 & 2 & 11 &  &  \\\hline
\end{array}$
\end{footnotesize}
\end{center}
By the partial sums in $\Z_{162}$ of the natural simple orderings of the rows (columns, respectively)  we obtain the cycles
$C_i$'s ($C'_i$'s, respectively):
\begin{center}
\begin{footnotesize}
$\begin{array}{lcl}
C_1= ( 77,   157,    79,     8,   -62,  -141,   -72,     0 ); & \;\; &C'_1= (77,    82,   116,   12,   136,    63,     1,     0);\\
C_2= (-17,   -37,   -62,   -90,   -64,   -45,   -27,     0 ); & \;\; &C'_2= (80,    88,   131,   20,   135,    59,     4,     0  );\\
C_3= (5,    13,    26,    42,    28,    21,    15,     0 ); & \;\; &C'_3= ( -78,   -95,   -82,   -61,   -11,    63,     9,     0 );\\
C_4= (34,    77,    44,     8,   -33,   -77,   -35,     0); & \;\; &C'_4= (-71,   -91,   -75,   -51,     8,    75,    12,     0);\\
C_5= (21,    45,    74,   106,    76,    53,    31,0); & \;\;&C'_5= (-70,   -95,  -109,  -142,  -113,   -76,   \\
& & -10,     0 );\\
C_6= (58,   109,   159,   56,   7,   117,    60,     0); & \;\; &C'_6= (-79,  -107,  -114,  -150,  -118,   -78,\\
&&   -3,     0  );\\
C_7= (-38,   -85,   -48,    -8,    37,    85,    39,     0 );& \;\;  &C'_7= ( 26,    20,   -21,   -51,  -100,   -55,    -2,     0 );\\
C_8= ( -73,  -149,   -75,    -8,    58,   133,    68,     0 ); & \;\; &C'_8= (19,     4,   -40,   -63,  -115,   -67,   -11,     0 );\\
C_9=(-62,  -117,  -9,  -72,  -19,  -125,   -64,     0 ); &\;\;&C'_9=( 69,    87,   129,   107,    50,     4,   -61,     0 );\\
C_{10}=(-1,    -5,   -14,   -26,   -16,   -13,   -11,     0); & \;\; &C'_{10}=(72,    99,   134,   103,    43,     4,   -64,     0 ).
\end{array}$
\end{footnotesize}
\end{center}
Then $\mathcal{B}=\{C_1,\ldots,C_{10}\}$ and $\mathcal{B}'=\{C'_1,\ldots,C'_{10}\}$ are two sets of base cycles
of a pair of orthogonal cyclic $8$-cycle decompositions of  $K_{162}-I$.\\
Analogously, if we consider the partial sums of each row (column, respectively) in $\Z_{161}$, we obtain the cycles
$\widetilde{C}_i$'s ($\widetilde{C}'_j$'s, respectively):
\begin{center}
\begin{footnotesize}
$\begin{array}{lcl}
\widetilde{C}_i= C_i, \quad i\ne 6,9;  &\quad&  \widetilde{C}'_j=C_j, \quad j\ne 1,2;\\
\widetilde{C}_6= (58,   109,   159,   57,   8,   117,    60,     0);& \quad &\widetilde{C}'_1= (77,    82,   116,   13,   136,    63,     1,     0);\\
\widetilde{C}_9=(-62,  -117,  -10,  -73,  -20,  -125,   -64,     0 ); & \quad &\widetilde{C}'_2= (80,    88,   131,   21,   135,    59,     4,     0  ).
\end{array}$
\end{footnotesize}
\end{center}
 Now $\widetilde{\mathcal{B}}=\{\widetilde{C}_1,\ldots,\widetilde{C}_{10}\}$ and $\widetilde{\mathcal{B}}'=\{\widetilde{C}'_1,\ldots,\widetilde{C}'_{10}\}$ are two sets of base cycles
of a pair of orthogonal cyclic $8$-cycle decompositions of  $K_{161}$.
\end{ex}

\section{Biembeddings of cycle decompositions}\label{sec3}

This section is dedicated to  the connection between Heffter arrays and biembeddings of two cycle decompositions on an orientable surface.
We recall that an embedding of a graph with each edge on a face of size $k$ and a face of size $h$ is called a \emph{biembedding}.
Notice that a biembedding is $2$-colorable with the faces that are $k$-cycles receiving 
one color while those faces that are $h$-cycles receiving the other color (see \cite{DM}).

Consider now a generic partially filled square array $A$ of size $n$ such that its $N$ nonempty
entries are pairwise distinct.
As usual,  we identify each row (column) of $A$ with the set whose elements are the entries of the nonempty cells of
such a row (column).
Let $\omega_r$ ($\omega_c$, respectively) be any ordering of the rows (columns, respectively) of $A$.
We say that $\omega_r$ and $\omega_c$ are two \emph{compatible} orderings if $\omega_r\circ \omega_c$ is a cycle of order $N$.

In particular, the following result holds:

\begin{thm}\label{Gustin}
Given a Heffter array $H=\H(m,n;h,k)$ with simple compatible orderings modulo $2nk+1$ ($2nk+2$, respectively) $\omega_r$ on the rows and $\omega_c$ on the columns of $H$, there exists a biembedding of $K_{2nk+1}$ (of $K_{2nk+2}- I$, respectively) on an orientable surface such that every edge is on a simple cycle face of size $k$ and a simple cycle face of size $h$.  
\end{thm}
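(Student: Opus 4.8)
The plan is to realise the two cyclic cycle decompositions carried by $H$ as the two families of faces of a single orientable embedding, via the classical current graph (Gustin) construction in the form adapted to Heffter arrays by Archdeacon \cite{A}. Set $v=2nk+1$ (respectively $v=2nk+2$) and $N=mh=nk$. From $H$ I would first build an embedded auxiliary graph, the \emph{current graph}: let $\Lambda$ be the bipartite multigraph with a vertex $x_i$ for each row $R_i$, a vertex $y_j$ for each column $C_j$, and one edge $e_a$ for each filled cell, joining $x_i$ to $y_j$ when the entry $a$ occupies row $i$ and column $j$; orient $e_a$ from $x_i$ to $y_j$ and give it the \emph{current} $a\in\Z_v$. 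Equip $\Lambda$ with the rotation system in which the edges around $x_i$ are cyclically ordered following the order $\omega_r$ prescribes on $R_i$, and the edges around $y_j$ following the order $\omega_c$ prescribes on $C_j$ (reversing one of the two families of orderings if the orientation conventions require it; reversal preserves simplicity and only mirrors the resulting base cycles). This rotation system embeds $\Lambda$ in an orientable surface $S_0$.

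The next step is to verify that $(\Lambda,S_0)$, with these currents, meets the hypotheses of Gustin's derived-embedding theorem. \emph{(i) Kirchhoff's current law.} The currents around $x_i$ sum to $\sum_{a\in R_i}a=0$ and those around $y_j$ to $\sum_{a\in C_j}a=0$ in $\Z_v$; this is exactly condition (d) of Definition \ref{def:H}. \emph{(ii) Single face.} Tracing the boundary walk of $S_0$ amounts to alternately following the row-rotation and the column-rotation, so this walk runs through the $N$ cells in the cyclic order given by the permutation $\omega_r\circ\omega_c$; hence $S_0$ is a one-face embedding precisely when $\omega_r$ and $\omega_c$ are compatible. \emph{(iii) Full set of currents.} By conditions (a)--(b) of Definition \ref{def:H} the entries are $\{\pm 1,\dots,\pm nk\}$ with pairwise distinct absolute values, so the multiset $\{\pm a:a\ \text{a filled cell}\}$ equals $\Z_v\setminus\{0\}$ in the $K_v$ case and $\Z_{2nk+2}\setminus\{0,nk+1\}$ in the $K_{2nk+2}-I$ case.

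Given (i)--(iii), the derived embedding is the biembedding we want. Its underlying graph $\widehat\Lambda$ has vertex set $\Z_v$ and, for every cell $a$, the edges $\{t,t+a\}$ with $t\in\Z_v$; by (iii) (together with the distinct-absolute-values condition, which rules out multiple edges) these are precisely the edges of $K_v$, each once (respectively those of $K_{2nk+2}-I$, each once), so $\widehat\Lambda$ is the required graph. The derived surface $\widehat S$ is orientable, a derived embedding over an orientable current-graph embedding being again encoded by a pure rotation system. Its faces are the lifts of the closed walks around the vertices of $\Lambda$: around $x_i$ the currents met, in the order $\omega_r$, are the entries $a_1,\dots,a_h$ of $R_i$ and they sum to $0$, so this walk lifts from each $t\in\Z_v$ to the closed walk $t,\ t+a_1,\ t+a_1+a_2,\dots$, that is, to a translate of the base $h$-cycle obtained from the partial sums of that row ordering; likewise each $y_j$ lifts to translates of the base $k$-cycles of the columns. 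Hence every face of $\widehat S$ is a translate of a row cycle or of a column cycle, the two kinds $2$-colour the faces, and — since each cell $a$ lies in exactly one row and one column — every edge of $\widehat\Lambda$ lies on exactly one face of size $h$ and one of size $k$. Finally, simplicity of $\omega_r$ and $\omega_c$ modulo $v$ says exactly that the partial sums of each row and column ordering are pairwise distinct modulo $v$, so each lifted walk is a genuine cycle; thus every face is a simple cycle and $\widehat S$ is the asserted biembedding.

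I expect the only genuinely delicate point to be (ii): pinning down the orientation conventions for the two rotation systems of $\Lambda$ so that the single-face property of $S_0$ is literally equivalent to ``$\omega_r\circ\omega_c$ is an $N$-cycle'' (this is also where the possible reversal of one family of orderings is absorbed). The remaining ingredients are either verbatim Heffter-array axioms or standard facts about derived embeddings, so once the current-graph dictionary is in place the proof is essentially bookkeeping.
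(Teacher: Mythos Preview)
Your proposal is correct and follows exactly the route the paper indicates: the paper's own proof is simply a two-sentence citation of Archdeacon's current-graph construction \cite{A} (built on Gustin \cite{G}), and what you have written is a faithful, detailed unpacking of precisely that construction, including the extension to the cocktail party graph via the Cayley-graph viewpoint. Your identification of the only delicate point---matching the single-face condition on the current graph to the compatibility condition $\omega_r\circ\omega_c$ being an $N$-cycle---is apt, and is exactly the content Archdeacon formalises.
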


\begin{proof}
The result for complete graphs was obtained by Archdeacon \cite[Sections 3 and 4]{A} using current graphs. This construction is based on a paper of Gustin \cite{G} that works in general for Cayley graphs, implying the result for cocktail graphs.
\end{proof}

Let $k\geq 1$ be an odd integer and let $A=(a_{i,j})$ be a partially filled square array of size $n$. We say that the
element $a_{i,j}$ belongs to the
diagonal $D_{s}$ if $j-i\equiv s-1 \pmod{n}$.
Moreover, $A$ is said to be cyclically $k$-diagonal if
the nonempty cells of $A$ are exactly those of the diagonals $D_s$ with $s\in \{r,\dots,r+k-1\}$ for a suitable integer
$r\in \{1,\dots,n\}$.

\begin{ex}\label{ex1}
The following partially filled array $A$ of size $n=9$  is cyclically $5$-diagonal  (with $r=8$):
\begin{center}
\begin{footnotesize}
$\begin{array}{|r|r|r|r|r|r|r|r|r|}\hline
38 &  39 &  40 &    &   &    &    &  36 &  37 \\ \hline
    42 &  43 &  44 &  45 &    &    &    &    &  41 \\ \hline
     1 &   2 &   3 &   4 &   5 &    &    &    &    \\ \hline
      &   6 &   7 &   8 &   9 &  10 &    &    &    \\ \hline
      &    &  11 &  12 &  13 &  14 &  15 &    &    \\ \hline
      &    &    &  16 &  17 &  18 &  19 &  20 &    \\ \hline
      &    &    &    &  21 &  22 &  23 &  24 &  25 \\ \hline
    30 &    &    &    &    &  26 &  27 &  28 &  29 \\ \hline
    34 &  35 &    &    &    &    &  31 &  32 &  33 \\ \hline
    \end{array}$
   \end{footnotesize}
   \end{center}
  \end{ex}

Given a cyclically $k$-diagonal array $A$ whose nonempty cells belong to the diagonals $D_r,D_{r+1},\dots, D_{r+k-1}$,
we can relabel its elements setting $b_{i,j}=a_{i-r+1,j}$, where the indices are considered modulo $n$ in such a way
that they belong to the set $\{1,\ldots,n\}$. We obtain a
partially filled array $B$ of size $n$ which is still cyclically $k$-diagonal but with  nonempty diagonals $D_1,\dots,D_k$. We call
such $B$ the standard form of $A$.

Note that this procedure has no influence on any orderings $\omega_r$ and $\omega_c$ of the rows and of the columns of
$A$, respectively.

\begin{ex}\label{ex2}
Starting from the array $A$ of Example \ref{ex1} we get the following $B$:
   \begin{center}
\begin{footnotesize}
$\begin{array}{|r|r|r|r|r|r|r|r|r|}\hline
     1 &   2 &   3 &   4 &   5 &    &    &    &    \\\hline
      &   6 &   7 &   8 &   9 &  10 &    &    &    \\\hline
      &    &  11 &  12 &  13 &  14 &  15 &    &    \\\hline
      &    &    &  16 &  17 &  18 &  19 &  20 &    \\\hline
      &    &    &    &  21 &  22 &  23 &  24 &  25 \\\hline
    30 &    &    &    &    &  26 &  27 &  28 &  29 \\\hline
    34 &  35 &    &    &    &    &  31 &  32 &  33 \\\hline
    38 &  39 &  40 &    &    &    &    &  36 &  37 \\\hline
    42 &  43 &  44 &  45 &    &    &    &    &  41 \\\hline
    \end{array}$
   \end{footnotesize}
   \end{center}
\end{ex}

\begin{prop}\label{biembk}
Let $k$ be an odd integer and let
$A$ be a cyclically $k$-diagonal partially filled square
array of size $n\geq k$ such that its nonempty entries are pairwise distinct.
If $\gcd(n,k-1)=1$, then there exist two compatible orderings $\omega_r$ and $\omega_c$ of the  rows and the columns of
$A$.
\end{prop}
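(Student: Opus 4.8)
The plan is to reduce to standard form and then write down explicit orderings. By the observation preceding the statement, replacing $A$ by its standard form changes neither the rows nor the columns as sets (nor any ordering of them), so I may assume the nonempty cells of $A$ are exactly those on the diagonals $D_1,\dots,D_k$; in particular $A$ has $N=nk$ nonempty entries, and I must exhibit orderings $\omega_r,\omega_c$ of its rows and columns with $\omega_r\circ\omega_c$ a single $nk$-cycle on the set of cells. I will label a cell by the pair $(i,j)\in\Z_n\times\Z_n$ of its row and column, with $j-i\in\{0,1,\dots,k-1\}$, so that $(i,j)$ lies on $D_{j-i+1}$; then row $i$ equals $\{(i,i),(i,i+1),\dots,(i,i+k-1)\}$ and column $j$ equals $\{(j,j),(j-1,j),\dots,(j-k+1,j)\}$.

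My candidate is the natural orderings of rows and columns, with one column reversed. Let $\omega_r$ read each row left to right cyclically, i.e.\ $\omega_r(i,j)=(i,j+1)$ with $\omega_r(i,i+k-1)=(i,i)$; let $\omega_c$ read each column $j\ne0$ top to bottom cyclically, i.e.\ $\omega_c(i,j)=(i+1,j)$ with $\omega_c(j,j)=(j-k+1,j)$; and read column $0$ bottom to top, i.e.\ with the reversed cyclic order (equivalently, in order of increasing diagonal index). First I would compute $\sigma:=\omega_r\circ\omega_c$ on every cell not in column $0$: here $\omega_c$ moves a cell one row down (decreasing its diagonal index by one) and $\omega_r$ then moves it one column right (increasing it by one), so, after lining up the two wrap-arounds, I expect $\sigma(i,j)=(i+1,j+1)$ when $j\ne i$ and $\sigma(i,i)=(i-(k-1),i-(k-1))$. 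Thus, away from column $0$, $\sigma$ preserves every diagonal: on $D_s$ with $s\ge2$ it is the translation $(i,j)\mapsto(i+1,j+1)$, which is an $n$-cycle, and on $D_1$ it is the shift $i\mapsto i-(k-1)$ of the main diagonal, which is an $n$-cycle precisely because $\gcd(n,k-1)=1$. So if column $0$ were also read top to bottom, $\sigma$ would be a disjoint union of $k$ cycles of length $n$, one per diagonal, and the hypothesis $\gcd(n,k-1)=1$ is exactly what makes the $D_1$ piece a single $n$-cycle.

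The reversal of column $0$ is what fuses these $k$ cycles into one. Since $\omega_c$ is used, in computing $\sigma=\omega_r\circ\omega_c$, only on the column of the argument, reversing column $0$ alters $\sigma$ only at its $k$ cells $x_m:=(-m,0)$, which lie one on each diagonal ($x_m\in D_{m+1}$). A short computation should give $\sigma(x_m)=\omega_r(x_{m+1})$ (indices mod $k$), and for $0\le m\le k-3$ this is the cell $x_{m+2}+(1,1)$ on $D_{m+3}$ --- exactly the cell of $D_{m+3}$ that the translation $(i,j)\mapsto(i+1,j+1)$ carries back to the column-$0$ cell $x_{m+2}$ of $D_{m+3}$ after $n-1$ steps. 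The two remaining values $m=k-2,k-1$ involve a row wrap-around and must be checked by hand; they should give $\sigma(x_{k-2})=(1-k,1-k)\in D_1$ and $\sigma(x_{k-1})=(0,1)\in D_2$, which fit the same ``jump $+2$ diagonals, landing at the entry cell'' pattern (with the entry cell of $D_1$ being the point from which the shift $i\mapsto i-(k-1)$ needs $n-1$ steps to reach $x_0$). Now I trace the $\sigma$-orbit of $x_0$: jump from $x_0\in D_1$ to $D_3$, run the translation through all $n$ cells of $D_3$ back to $x_2$, jump to $D_5$, and so on. Because $k$ is odd, $\gcd(2,k)=1$, so stepping by $2$ modulo $k$ visits every one of $D_1,\dots,D_k$; when the orbit re-enters $D_1$, the shift $i\mapsto i-(k-1)$ --- again an $n$-cycle by $\gcd(n,k-1)=1$ --- sweeps out all $n$ cells of $D_1$ and closes up at $x_0$. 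Hence $\sigma$ is one $nk$-cycle and $\omega_r,\omega_c$ are compatible.

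The structural content is just the two highlighted points: on $D_1$ the map $\sigma$ is a shift by $-(k-1)$, so $\gcd(n,k-1)=1$ makes it a single $n$-cycle; and reversing one column turns ``stay on the current diagonal'' into ``move $+2$ diagonals'', so $k$ odd chains all the diagonals into one cycle. The part I expect to be fiddly rather than hard is the bookkeeping in the two computations of $\sigma$: the three wrap-around conventions (the last cell of a row, the bottom cell $(j,j)$ of a column, and the wrap in the reversed column $0$) must be aligned so that the jump out of each diagonal lands exactly at the entry cell of the next, and $D_1$ must be treated separately throughout. I would also spot-check the smallest case $k=3$ (where $D_3=D_k$) to confirm it fits the general description, and it does.
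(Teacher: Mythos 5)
Your proposal is correct and follows essentially the same route as the paper: you choose the identical orderings (natural order on rows, natural order on all columns with one column reversed) and verify compatibility by tracking how $\omega_r\circ\omega_c$ acts diagonal by diagonal, using $\gcd(n,k-1)=1$ to make the specially-traversed diagonal a single $n$-cycle and the oddness of $k$ to chain all $k$ diagonals together. The paper carries out the same computation by writing the resulting $nk$-cycle explicitly (with the gcd condition appearing on $D_k$ under its composition convention, rather than on $D_1$ as in yours), so the difference is only in presentation, not in substance.
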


\begin{proof}
It is not restrictive to consider $A$ written in the standard form, so that its  nonempty entries are the diagonals
$D_1,\dots, D_{k}$.
Let $\omega_r$ be
the natural ordering of the rows of $A$ from left to right and let $\omega_c$ be
the natural ordering of the columns of $A$ from top to bottom for the first $n-1$ columns, and from bottom to top for
the last column, namely:
$$\begin{array}{rcl}
\omega_r & =&
(a_{1,1},a_{1,2},\ldots,a_{1,k})(a_{2,2},a_{2,3}\ldots,a_{2,k+1})\cdots(a_{n,n},a_{n,1}\ldots,a_{n,k-1}),\\
\omega_c & =& (a_{n-k+2,1},a_{n-k+3,1},\ldots,a_{n,1},a_{1,1})\\
&&            (a_{n-k+3,2},a_{n-k+4,2},\ldots,a_{n,2},a_{1,2},a_{2,2})\cdots \\
&& (a_{n,k-1}, a_{1,k-1},a_{2,k-1},\ldots,a_{k-1,k-1}) (a_{1,k},a_{2,k},\ldots,a_{k,k})\\
&&   (a_{2,k+1},a_{3,k+1},\ldots,a_{k+1,k+1})\cdots(a_{n-k,n-1},a_{n-k+1,n-1},\ldots,a_{n-1,n-1})\\
&&(a_{n,n},a_{n-1,n},\ldots,a_{n-k+1,n}).
  \end{array}$$
Then $\omega_r\circ \omega_c$ moves cyclically from left to right and goes down $n-1$ times and up once. Setting
$t=n-k+1$, we obtain that
$$\omega_r\circ \omega_c
=(D'_2,D'_4,\ldots,D'_{k-1},D'_1,D'_3,\ldots,D'_{k-2},
a_{t,k+t-1},a_{2t,k+2t-1},\ldots,
a_{nt,k+nt-1}),$$
where the indices are considered modulo $n$,  and $D'_s$ is the sequence
$$ a_{n-s+1,n}, a_{n-s+2,1},\ldots,a_{n,s-1}, a_{1,s},a_{2,s+1},\ldots,a_{n-s,n-1}.$$
Note that, for $s\in \{1,\dots,k-1\}$, the elements of $D'_s$ are exactly  the ones of the diagonal $D_s$ and hence are
pairwise distinct.
The last $n$ elements $a_{t,k+t-1},a_{2t,k+2t-1},\ldots,$
$a_{nt,k+nt-1}$ belong to the diagonal $D_k$ and are pairwise distinct
since $\gcd(n,t)=1$.
Therefore $\omega_r\circ \omega_c$ is a cycle of order $nk$.
\end{proof}

\begin{ex}\label{ex3}
Considering the array $B$ of Example \ref{ex2} we have the following ordering for the rows:
   $$\begin{array}{rcl}
   \omega_r & =& (1,2,3,4,5)(6,7,8,9,10)(11,12,13,14,15)(16,17,18,19,20)\\
   &&(21,22,23,24,25)(26,27,28,29,30)
   (31,32,33,34,35)(36,37,38,39,40)\\
   &&(41,42,43,44,45);
   \end{array}$$
and the following ordering for the column:
      $$\begin{array}{rcl}
   \omega_c & =&
(30,34,38,42,1)(35,39,43,2,6)(40,44,3,7,11)(45,4,8,12,16)\\
&&(5,9,13,17,21)(10,14,18,22,26)(15,19,23,27,31)
(20,24,28,32,36)\\
&&(41,37,33,29,25).
      \end{array}$$
Hence,
$$\begin{array}{rcl}\omega_r\circ \omega_c&=&(37,
42,2,7,12,17,22,27,32,29,34,39,44,4,9,14,19,24,41,
1,6,\\
&&11, 16,21,26,31,36,33,38,43,3,8,13,18,23,28,25,5,30,10,35,\\
&&15,40,20, 45).\end{array}$$
\end{ex}

\begin{prop}\label{biemb7}
Let $A$ be a cyclically $7$-diagonal partially filled square
array of odd size $n\geq 7$, such that its nonempty entries are pairwise distinct.
Then there exist two compatible orderings $\omega_r$ and $\omega_c$ of the  rows and the columns of $A$.
\end{prop}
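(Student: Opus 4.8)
The plan is to split the argument according to the value of $\gcd(n,k-1)=\gcd(n,6)$, which, since $n$ is odd, is either $1$ or $3$. If $\gcd(n,6)=1$ there is nothing new to do: this is exactly the hypothesis of Proposition \ref{biembk} with $k=7$, so the orderings constructed there are compatible. The whole content of the proposition is thus the remaining case $\gcd(n,6)=3$, i.e.\ $n\equiv 3\pmod 6$ (so in particular $n\geq 9$).

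In that case I would first put $A$ in standard form and run through the computation of $\omega_r\circ\omega_c$ with the choices of Proposition \ref{biembk}. Because $\gcd(n,n-6)=3$, the $n$ cells of the last diagonal $D_7$ get threaded with ``step'' $-(k-1)\equiv-6$ in the row index, and the map $i\mapsto i-6$ on $\Z_n$ has three orbits of length $n/3$; hence $\omega_r\circ\omega_c$ is no longer a single cycle but a union of exactly three cycles --- one ``long'' cycle carrying all of $D_1,\dots,D_6$ together with one orbit of $D_7$, and two ``short'' cycles, each a single orbit of $D_7$ of length $n/3$. The goal is then to perturb the orderings so that these three cycles are spliced into one $nk$-cycle.

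Reversing the traversal of a single additional line (row or column) does not suffice, and it is worth isolating why: altering the ordering of one line multiplies $\omega_r\circ\omega_c$ by an \emph{even} permutation whose support is the $7$ cells of that line, six of them lying on $D_1,\dots,D_6$ (hence in the long cycle) and one on $D_7$; consequently at least one of the two short cycles is untouched and survives, and a parity count then forces the total number of cycles to remain odd, hence at least three. I would therefore modify two lines at once, chosen so that their $D_7$-cells lie in the two \emph{different} short cycles: reversing the first one leaves the other short cycle intact but reshuffles the long cycle and the first short cycle into cycles that still meet the second modified line in cells belonging to more than one of them, and reversing the second line then ties all the pieces together. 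I would make this concrete by exhibiting an explicit such pair (for instance the last column together with one further, suitably placed column --- equivalently, a single adjacent transposition inside each of two rows) and verifying by direct inspection that the resulting $\omega_r\circ\omega_c$ is a single cycle of length $nk$.

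The delicate point, and the bulk of the proof, is exactly this final verification: neither parity nor the incidence pattern of supports with cycles forces the outcome to be a single cycle, so one must genuinely follow the permutation through both perturbations and control the way the long cycle and the first short cycle interleave, so that the second perturbation really merges everything rather than splitting it into three or five pieces. Choosing the two modified lines so that this bookkeeping is transparent, and checking that the same recipe works uniformly for every $n\equiv 3\pmod 6$ with $n\geq 9$, is where the effort goes; the room to do so is available, since each residue class modulo $3$ is realized by $n/3\geq 3$ of the cells of $D_7$.
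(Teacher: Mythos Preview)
Your reduction to Proposition~\ref{biembk} when $\gcd(n,6)=1$ is correct, and your diagnosis of what goes wrong when $n\equiv 3\pmod 6$ --- three cycles, one long cycle through $D_1,\dots,D_6$ and one coset of $D_7$, plus two short $D_7$-cycles of length $n/3$ --- is accurate. The argument that a single line modification cannot suffice (one short cycle is disjoint from its support, hence survives, and parity then keeps the count at least three) is also sound.

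The gap is that you stop exactly where the work begins. You propose to reverse two well-chosen lines so that each meets a different short cycle, and you correctly observe that nothing in the incidence or parity data \emph{forces} the result to be a single cycle; one must simply check. But you never make a specific choice and never perform that check. ``I would make this concrete by exhibiting an explicit such pair \ldots\ and verifying by direct inspection'' is a description of a proof, not a proof. Until a concrete pair is named and the resulting permutation is traced, there is no guarantee that two modifications are enough rather than, say, splitting the long cycle badly and leaving you with five pieces.

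For comparison, the paper does not perturb the construction of Proposition~\ref{biembk} at all. Instead of reversing only the last column and then repairing, it reverses the last \emph{four} columns from the outset, so that $\omega_r\circ\omega_c$ moves down $n-4$ times and up four times. This is a different dynamical picture; the paper then writes the resulting permutation explicitly as a single list built from segments $D'_1,\dots,D'_6$ of the first six diagonals together with six interleaved arithmetic progressions $E_1,\dots,E_6$ (step $-6$) covering $D_7$, and reads off directly that it is a $7n$-cycle. The advantage of this route is that the verification is uniform and mechanical once the sequences are written down; your route would require a more delicate case analysis of how the long cycle and the first short cycle recombine after the first reversal, which you have not supplied.
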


\begin{proof}
It is not restrictive to consider $A$ written in the standard form, so that its  nonempty entries are the diagonals
$D_1,\dots, D_{7}$.
Let $\omega_r$ be
the natural ordering of the rows of $A$ from left to right and let $\omega_c$ be
the natural ordering of the columns of $A$ from top to bottom for the first $n-4$ columns, and from bottom to top for
the last $4$ columns,
that is:
$$\begin{array}{rcl}
\omega_r & =& (a_{1,1},a_{1,2},\ldots,a_{1,7})(a_{2,2},a_{2,3}\ldots,a_{2,8})\cdots(a_{n,n},a_{n,1}\ldots,a_{n,6}),\\
\omega_c & =& (a_{n-5,1},a_{n-4,1},\ldots,a_{n,1},a_{1,1})        
(a_{n-4,2},a_{n-3,2},\ldots,a_{n,2},a_{1,2},a_{2,2})\cdots \\
&& (a_{n,6}, a_{1,6},a_{2,6},\ldots,a_{6,6}) (a_{1,7},a_{2,7},\ldots,a_{7,7})  
(a_{2,8},a_{3,8},\ldots,a_{8,8})\cdots\\
&&(a_{n-10,n-4},a_{n-9,n-4},\ldots,a_{n-4,n-4})
(a_{n-3,n-3},a_{n-4,n-3},\ldots,a_{n-9,n-3})\\
&&(a_{n-2,n-2},a_{n-3,n-2},\ldots,a_{n-8,n-2})
(a_{n-1,n-1},a_{n-2,n-1},\ldots,a_{n-7,n-1})\\
&&(a_{n,n},a_{n-1,n},\ldots,a_{n-6,n}).
  \end{array}$$
Then $\omega_r\circ \omega_c$ moves cyclically from left to right and goes down $n-4$ times and up four times.
It can be showed that $\omega_r\circ \omega_c$ is a cycle of order $7n$. However, since the proof depends on the residue
class of $n$ modulo $6$, we present here only the case $n\equiv 3\pmod 6$, i.e. the case not covered by
Proposition \ref{biembk} (for $n=9$ it suffices an easy  direct check, so we also assume $n>9$).

For $s=1,\ldots,6$, consider the sequences
$$D'_s= \quad a_{n-s+1,n},a_{n-s+2,1},\ldots,a_{n,s-1}, a_{1,s},a_{2,s+1},\ldots, a_{n-3-s,n-4}$$
and
$$
E_s=\left\{\begin{array}{ll}
 a_{n-9+s, n-3+s}, a_{n-9+s-6, n-3+s-6},\ldots, a_{s,6+s},   a_{n-6+s,s} & \textrm{ if } s=1,2,3; \\
 a_{n-15+s, n-9+s}, a_{n-15+s-6, n-9+s-6},\ldots, a_{s,6+s},   a_{n-6+s,s} & \textrm{ if } s=4,5,6. \\
\end{array}\right.$$
Then, it is easy to see that
$$\begin{array}{rcl}
\omega_r\circ \omega_c
&=&(D'_4, a_{n-8,n-3},a_{n-2,n-2},a_{n-3,n-1},D'_5, E_6 ,D'_2,a_{n-6,n-3},a_{n-7,n-2},\\
&& a_{n-1,n-1}, D'_3, a_{n-7,n-3}, E_1,  E_4,  a_{n-3,n-2},a_{n-4,n-1}, D'_6, a_{n-3,n-3},\\
&& a_{n-4,n-2}, a_{n-5,n-1},  E_3, a_{n-4,n-3},a_{n-5,n-2},a_{n-6,n-1}, D'_1,a_{n-5,n-3}, \\
&&a_{n-6,n-2},E_2, E_5 ,a_{n-2,n-1}).
\end{array}$$
Since that the elements of $D'_s$ are those of $D_s\setminus\{a_{n-2-s,n-3},a_{n-1-s,n-2},a_{n-s,n-1}\}$
for all $s=1,\ldots,6$ and the elements of $E_1\cup \ldots \cup E_6$ are those of $D_7$, it follows that
$\omega_r\circ \omega_c$ is a cycle of order $7n$.
\end{proof}

\section{Direct constructions of $\SH^*(n;k)$ for $k\leq 10$}\label{sec4}

In this section, we provide direct constructions of $\SH^*(n;k)$ for $6\leq k\leq 10$ and for any  $n$ satisfying the necessary conditions of Theorem \ref{th:integer}.

Clearly, the main task is to check the simplicity of each row and each column.
A little help is given by noticing that, from Definition \ref{def:H}, the $i$-th partial sum
$s_i$ is different from $s_{i+1}$ and from $s_{i+2}$ both modulo $2nk+1$ and modulo $2nk+2$, where the subscripts are taken modulo $k$.
So, if $k=3,4,5$ then every ordering of any row and column of an $H=\H(n;k)$ is simple both modulo $2nk+1$ and modulo $2nk+2$,
and hence $H$ is an $\SH^*(n;k)$.
We recall that, for these values of $k$, explicit constructions of $\H(n;k)$'s have been described in \cite{ADDY,DW}.
So, we start with the case $k=6$.

We also fix some notation. Given a row or a column $A$ of a partially filled array, we denote by $\nn{A}$ the list of the absolute values of the nonempty entries of $A$ and
by $\S(A)$ the sequence of the partial sums of $A$ with respect to the natural ordering (ignoring the empty cells).
More generally, if $A_1,\ldots,A_r$ are rows (or columns), by $\nn{\cup_{i=1}^r A_i}$ we mean the union $\cup_{i=1}^r \nn{A_i}$.
Furthermore, $\q^t$ means a sequence of $t$ empty cells.

\begin{prop}\label{prop:6}
Let $n\geq 6$ be even. Then, there exists an $\SH^*(n;6)$.
\end{prop}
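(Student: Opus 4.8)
The plan is to give an explicit construction of an $\SH^*(n;6)$ for every even $n \geq 6$, modelled on the existing $\H(n;6)$ construction of \cite[Theorem 2.1]{ADDY} but with the first two columns swapped, as announced in the Remark following the $\SH(8;7)$ example. First I would write the array in its standard cyclically $6$-diagonal form (or in the block form used in \cite{ADDY}), so that each row and each column is a set of $6$ entries whose absolute values are controlled explicitly; since $nk = 6n \equiv 0 \pmod 4$ when $n$ is even, Theorem \ref{th:integer} guarantees these conditions are consistent, and the \cite{ADDY} construction already supplies an array satisfying conditions (a)--(d) of Definition \ref{def:H}. The only thing to verify is global simplicity, i.e.\ that the natural ordering of each row and each column is simple modulo $2nk+1 = 12n+1$ and, for condition $(\ast)$, also modulo $2nk+2 = 12n+2$.

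The key reduction is the observation already recorded in the paragraph preceding Proposition \ref{prop:6}: for a row or column $A = (a_1,\dots,a_6)$ with $\sum a_i \equiv 0$, the partial sums automatically satisfy $s_i \neq s_{i+1}$ and $s_i \neq s_{i+2}$ (indices mod $6$) modulo any $v \geq 2nk+1$, because consecutive or nearly-consecutive partial sums differ by one or two entries, none of which is $0$ or a sum $x + (-x)$. Hence to check simplicity of a length-$6$ ordering one only needs to rule out $s_b = s_c$ for the remaining pairs, which amounts to checking that no block of three cyclically consecutive entries sums to $0 \pmod v$ — equivalently (since the whole row sums to $0$) that the complementary block of three entries does not sum to $0 \pmod v$. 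So the entire verification collapses to: for each row and each column, for each of the $6$ windows of three consecutive entries, confirm that the sum of those three entries is not $\equiv 0 \pmod{12n+1}$ and not $\equiv 0 \pmod{12n+2}$. Because the entries have absolute value at most $6n$, such a sum of three entries lies in $(-18n, 18n)$, so it can only be $\equiv 0 \pmod{12n+1}$ if it equals $0$ or $\pm(12n+1)$, and similarly for $12n+2$; this turns each check into a finite, elementary inequality/equality on the explicit entries of the $\cite{ADDY}$ pattern.

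The main step is therefore the bookkeeping: use the explicit formulas for the entries $a_{i,j}$ of the swapped-column array (there will be a handful of congruence classes of rows and columns, depending on $n \bmod$ something small, as is typical for these constructions) and, for each window of three consecutive cells in each row-type and column-type, compute the sum symbolically in $n$ and check it is never $0$, never $\pm(12n+1)$, and never $\pm(12n+2)$. I would organize this using the notation $\nn{A}$ and $\S(A)$ introduced just before the statement, displaying $\S(R_i)$ and $\S(K_j)$ for each generic row $R_i$ and column $K_j$ and reading off that all partial sums are distinct. The swap of the first two columns is exactly what is needed to destroy the few coincidences that make the original \cite{ADDY} array fail to be globally simple, so part of the work is to point to precisely which windows were problematic before and why the swap fixes them without creating new ones.

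The hard part will be the case analysis and the risk of off-by-one errors in the diagonal/block indexing: one must be careful that the column entries, after the swap, are listed in the correct natural top-to-bottom order, and that the three-consecutive-entries windows wrap around correctly (the window straddling the end and beginning of a row is the subtlest). Everything else is routine: once the pattern is fixed and the ``three consecutive entries'' criterion is in place, each inequality is a linear statement in $n$ that holds for all $n \geq 6$ by inspection, with at most a few small values of $n$ needing a separate direct check. I would close by noting that the same array, by construction, satisfies (a)--(d), is integer, and has every row and column simple modulo both $12n+1$ and $12n+2$, hence is an $\SH^*(n;6)$, and then feed it into Proposition \ref{pr:ccp} to get the orthogonal cyclic $6$-cycle decompositions of $K_{12n+1}$ and $K_{12n+2} - I$.
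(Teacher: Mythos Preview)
Your proposal is correct and matches the paper's approach almost exactly: the paper constructs the array explicitly (indeed by swapping the first two columns of the $\H(n;6)$ from \cite{ADDY}), lists the rows $R_t$ and columns $C_t$ with their partial-sum sequences $\S(R_t)$, $\S(C_t)$, and then invokes precisely your reduction---for $k=6$ one only needs $s_i\not\equiv s_{i+3}\pmod\nu$ for $i=1,2,3$ and $\nu\in\{12n+1,12n+2\}$---to finish. The one small simplification you can take from the paper is that you do not need to worry separately about the wrap-around window of three entries: by complementarity (the six entries sum to $0$) the three ``straight'' windows $i=1,2,3$ already cover all six cyclic windows, so there is no subtle boundary case.
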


\begin{proof}
Let $H$ be the $n\times n$ partially filled array whose rows $R_t$ are as follows:
\begin{footnotesize}
\begin{eqnarray*}
R_1 & = & (5, -1, 2, -7, -9, 10, \q^{n-6} ), \\
R_2 & = & (-4, 3, -6, 8, 11, -12, \q^{n-6}),\\
R_{3+2i} &= &(\q^{2+2i}, -13-12i, 17+12i, 14+12i, -19-12i, -21-12i, 22+12i, \q^{n-8-2i}),\\
R_{4+2i} & = &(\q^{2+2i}, 15+12i, -16-12i,-18-12i, 20+12i, 23+12i,-24-12i ,\q^{n-8-2i}),\\
\end{eqnarray*}
 \end{footnotesize}
 \begin{footnotesize}
 \begin{eqnarray*}
R_{n-3} & = &  (-14+6n, 15-6n, \q^{n-6}, 23-6n, -19+6n, -22+6n, 17-6n),\\
R_{n-2} & = & (12-6n, -13+6n, \q^{n-6}, -21+6n, 20-6n, 18-6n, -16+6n),\\
R_{n-1} &= & (5-6n, -10+6n, 3-6n, -2+6n, \q^{n-6}, 11-6n, -7+6n),\\
R_{n} & = & (-4+6n, 6-6n, -1+6n, -6n, \q^{n-6}, -9+6n,8-6n),
\end{eqnarray*}

\end{footnotesize}
\noindent where $i=0,\ldots,\frac{n-8}{2}$. Note that every row contains exactly $6$ filled cells. Also, it is easy
to see that $\nn{R_{2h+1}\cup
R_{2h+2}}=\{1+12h,\ldots, 12+12h\}$ for all $h=0,\ldots,\frac{n-2}{2}$. Hence, $H$ satisfies conditions (a)
and (b) of Definition \ref{def:H}. Now, we list the partial sums for each row. We have
\begin{footnotesize}
\begin{eqnarray*}
\S(R_1) & = &(5,4,6,-1,-10,0), \\
\S(R_2) & = & (-4,-1,-7,1,12,0),\\
\S(R_{3+2i}) & =& ( -13-12i,  4,  18+12i,  -1, -22-12i, 0 ),\\
\S(R_{4+2i}) & =& (15+ 12i, -1,  -19 -12i, 1, 24+12i, 0 ),\\
\S(R_{n-3}) & = & (-14+6n, 1, 24-6n, 5, -17+6n, 0),\\
\S(R_{n-2}) & = & (12-6n, -1, -22+6n, -2, 16-6n,0 ), \\
\S(R_{n-1}) &= & (5-6n, -5, -2-6n, -4, 7-6n, 0),\\
\S(R_{n}) & = & (-4+6n, 2, 1+6n, 1, -8+6n, 0).
\end{eqnarray*}

\end{footnotesize}
\noindent Clearly, each row sums to $0$.
In view of previous considerations, in order to prove that each row is simple modulo $\nu\in \{12n+1, 12n+2\}$ it suffices to prove that
$s_{i}\not \equiv s_{i+3} \pmod \nu$ for $i=1,2,3$. From the definition of $H$ we obtain the following  expression of the columns:
\begin{footnotesize}
\begin{eqnarray*}
C_1 & = & (5, -4, \q^{n-6}, -14+6n, 12-6n, 5-6n, -4+6n)^T,\\
C_2 & = & (-1, 3, \q^{n-6}, 15-6n, -13+6n,-10+6n,6-6n  )^T, \\
C_3 & = & (2, -6, -13, 15, \q^{n-6}, 3-6n, -1+6n )^T,\\
C_4 & = & (-7,8,17,-16,\q^{n-6}, -2+6n, -6n)^T,\\
C_{5+2j} & = & (\q^{2j}, -9-12j, 11+12j, 14+12j, -18-12j, -25-12j, 27+12j, \q^{n-6-2j})^T,\\
C_{6+2j} & = & (\q^{2j}, 10+12j, -12-12j, -19-12j, 20+12j, 29+12j,-28-12j , \q^{n-6-2j})^T,
\end{eqnarray*}

\end{footnotesize}
\noindent where $j=0,\ldots,\frac{n-6}{2}$. We observe that each column contains exactly $6$ filled cells, then
condition (c) of Definition \ref{def:H} is satisfied.
The lists of the partial sums of the columns are
\begin{footnotesize}
\begin{eqnarray*}
\S(C_1) & = & (5, 1, -13+6n, -1, 4-6n, 0),\\
\S(C_2) & = & (-1, 2, 17-6n, 4, -6+6n, 0), \\
\S(C_3) & = & (2, -4, -17, -2, 1-6n, 0),\\
\S(C_4) & = & (-7,1, 18, 2, 6n, 0),\\
\S(C_{5+2j}) & = & (-9-12j, 2, 16+12j, -2, -27-12j,0),\\
\S(C_{6+2j}) & = & (10+12j, -2, -21-12j, -1, 28+12j, 0).
\end{eqnarray*}

\end{footnotesize}
\noindent Since every column sums to $0$, also condition (d)  of Definition \ref{def:H}  holds and so $H$ is an
$\H(n;6)$. Finally, for each column
one can check that
$s_{i}\not \equiv s_{i+3} \pmod \nu$, for $i=1,2,3$, where $\nu \in \{12n+1,12n+2\}$.
So we conclude that $H$ is a globally simple $\SH(n;6)$ that also satisfies condition $(\ast)$, namely $H$ is an $\SH^*(n;6)$.
\end{proof}

\begin{ex}
Following the construction illustrated in the proof of Proposition \ref{prop:6} for $n=8$, we obtain

\begin{footnotesize}
$$\SH^*(8;6)=\begin{array}{|r|r|r|r|r|r|r|r|}\hline
5 & -1 & 2 & -7 & -9 & 10 &  &  \\\hline
-4 & 3 & -6 & 8 & 11 & -12 &  &  \\\hline
 &  & -13 & 17 & 14 & -19 & -21 & 22 \\\hline
 &  & 15 & -16 & -18 & 20 & 23 & -24 \\\hline
34 & -33 &  &  & -25 & 29 & 26 & -31 \\\hline
-36 & 35 &  &  & 27 & -28 & -30 & 32 \\\hline
-43 & 38 & -45 & 46 &  &  & -37 & 41 \\\hline
44 & -42 & 47 & -48 &  &  & 39 & -40 \\\hline
\end{array}$$
\end{footnotesize}
\end{ex}

\begin{prop}\label{SH7_even}
Let $n\equiv 0 \pmod 4$ and $n\geq 8$. Then, there exists an $\SH^*(n;7)$.
\end{prop}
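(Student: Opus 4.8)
The plan is to mimic exactly the structure of the proof of Proposition~\ref{prop:6}: exhibit an explicit $n\times n$ partially filled array $H$ by giving closed formulas for its rows, grouped into a few "generic" families (indexed by a running parameter $i$) together with a bounded number of "boundary" rows, then verify in turn conditions (a)--(d) of Definition~\ref{def:H} and finally the simplicity condition modulo both $\nu=14n+1$ and $\nu=14n+2$ (so that $H$ is in fact an $\SH^*(n;7)$). Since here $k=7$ is odd, the array will naturally be taken cyclically $7$-diagonal (in the sense of Section~\ref{sec3}), which is the shape that makes the later biembedding results of Propositions~\ref{biembk} and~\ref{biemb7} applicable; but for the present statement only the defining conditions plus simplicity are needed.

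\medskip

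\noindent\textbf{Key steps.} First I would write down the rows $R_t$ explicitly. As in Proposition~\ref{prop:6} I expect two or three generic families $R_{a+7i},\dots$ each a length-$7$ pattern of consecutive integers (shifted by $14i$) surrounded by the appropriate blocks $\q^{\,\cdot}$ of empty cells, plus a constant number of exceptional rows near the top and bottom to "close up" the cyclic diagonal band; the hypothesis $n\equiv 0\pmod 4$ together with $nk=7n\equiv 0\pmod 4$ is what makes the arithmetic of Theorem~\ref{th:integer} work out and guides how the residual $1,\dots,14$-blocks are distributed. Second, I would check (a) and (b) by showing that the multiset of absolute values taken over each consecutive pair (or appropriate group) of rows is exactly a block $\{1+14h,\dots,14+14h\}$, so that together they give $\{1,\dots,7n\}$ with no repeats, and that each row and column has exactly $7$ filled cells, giving (c). Third, I would list $\mathcal S(R_t)$ for every row and every column: by design each partial-sum sequence $\mathcal S$ should consist of small numbers (the entries are chosen so that consecutive partial sums telescope, returning to $0$ after seven steps), which gives (d) and simultaneously makes the simplicity check tractable. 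Fourth, for simplicity modulo $\nu\in\{14n+1,14n+2\}$: since $s_i\neq s_{i+1},s_{i+2}$ automatically (as noted in the excerpt, because no row contains $0$ or a pair $\{x,-x\}$), it remains to check $s_i\not\equiv s_j\pmod{\nu}$ only for $|i-j|\in\{3,4\}$ (and the pair $\{i,j\}$ with $|i-j|$ wrapping around), i.e.\ a bounded number of inequalities per row and per column, each of which reduces to verifying that a certain small linear expression in $n$ is not $\equiv 0\pmod{14n+1}$ or $\pmod{14n+2}$; for large $n$ this is immediate by size, and the finitely many small $n$ are handled by direct inspection (and one should exhibit the $n=8$ array as an example, as the paper does after Proposition~\ref{prop:6}).

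\medskip

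\noindent\textbf{Main obstacle.} The delicate point is not any single verification but the bookkeeping: designing the boundary rows so that (i) the cyclic $7$-diagonal pattern closes consistently around the torus of indices, (ii) the absolute values used by the boundary rows fill exactly the gaps left by the generic families with no collision, and (iii) the partial sums of \emph{every} column --- not just the rows --- also stay small and simple. Columns are the harder half because the column structure is an implicit consequence of the row formulas; I would, as in Proposition~\ref{prop:6}, re-derive explicit formulas $C_j=(\dots)^T$ for the columns from the definition of $H$, again split into generic and exceptional families, and then repeat the $\mathcal S(C_j)$ computation and the $|i-j|\in\{3,4\}$ simplicity checks. The residue class $n\equiv 0\pmod 4$ is presumably chosen precisely so that one uniform construction covers all such $n\geq 8$; the companion cases $n\equiv 1,2,3\pmod 4$ (subject to $7n\equiv 0,3\pmod 4$, i.e.\ $n\equiv 0,1\pmod 4$) would be treated in separate propositions with analogous but separately tuned arrays.
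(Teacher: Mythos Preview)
Your high-level strategy---exhibit an explicit array, verify (a)--(d), then check simplicity of the partial sums modulo $14n+1$ and $14n+2$---is exactly what the paper does. But your concrete expectations about the shape of the construction are off in ways that would cause a real attempt along these lines to stall.

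First, the array in the $n\equiv 0\pmod 4$ case is \emph{not} cyclically $7$-diagonal. In the paper only the companion construction for $n\equiv 1\pmod 4$ (Proposition~\ref{SH7_odd}) has that form; indeed, in the proof of Theorem~\ref{thm:biemb} the authors explicitly single out the $n\equiv 1\pmod 4$ array as the $7$-diagonal one. For $n=4a$ the rows have a much more intricate support pattern, with entries separated by blocks $\q^{2a-8}$, $\q^{2a-6}$, $\q^{3}$, $\q$ in a fixed but non-contiguous arrangement. Second, the construction is far bulkier than ``two or three generic families plus a constant number of boundary rows'': there are two generic families $R_{7+2i}$, $R_{8+2i}$ (and matching ones $R_{2a+8+2i}$, $R_{2a+9+2i}$) but also roughly fifteen individually specified rows $R_1,\dots,R_6$, $R_{2a-1},\dots,R_{2a+7}$, $R_{4a}$, and the entries are not consecutive integers shifted by $14i$ but genuinely two-parameter expressions like $-16+24a-4i$, $4+7a+i$, etc. The small cases $n=8,12$ are handled separately by ad~hoc arrays.

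Third, your simplicity heuristic does not survive contact with the actual partial sums. The sequences $\S(R_t)$ are not small: they are linear in $a$ (for example $\S(R_1)=(-5a,\,3-29a,\,-13a,\,-1-15a,\,-5+9a,\,-1-7a,\,0)$), and since the moduli $14n+1=56a+1$ and $14n+2=56a+2$ are also linear in $a$, a ``for large $n$ immediate by size'' argument is unavailable. Each of the required noncongruences $s_b\not\equiv s_c$ is a genuine linear-in-$a$ identity to be checked against a linear-in-$a$ modulus, and the paper simply performs (and asks the reader to perform) this direct case-by-case verification for every row and every column type. So the ``main obstacle'' is not only the bookkeeping you anticipate, but also that none of the shortcuts you propose (diagonal shape, small sums, size bounds) are actually available here.
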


\begin{proof}
An $\SH^*(8;7)$ and an $\SH^*(12;7)$ are given in \cite{web}. So,  assume $n=4a\geq 16$ and let $H$ be the $n\times n$ array whose rows $R_t$ are the following:
 \begin{footnotesize}
\begin{eqnarray*}
R_1 & = & ( -5a,\q^{2a-8}, 3-24a,\q, -3+16a,\q^3, -1-2a, \q^{2a-6}, -4+24a, \q, 4-16a,\\
&&\q, 1+7a,\q ),\\
R_2 & = & (\q, -2-6a, \q^{2a-8}, 5-28a, \q, -5+20a, \q^3, 2, \q^{2a-6}, -6+28a,\q, 6-20a, \\
&& \q, 6a ),\\
R_3 & = & ( 2+7a, \q, 1-5a,\q^{2a-8}, 7-24a, \q, -7+16a,\q^3, -3-2a,\q^{2a-6}, -8+24a,\\
&&\q, 8-16a,\q ),\\
R_4 & = & (\q, -1+6a, \q ,-3-6a,\q^{2a-8}, 9-28a, \q, -9+20a,\q^3, 4,\q^{2a-6}, -10+ 28a,\\
&&\q, 10-20a),\\
 R_5 & = & (  12-16a, \q, 3+7a, \q, 2-5a,\q^{2a-8}, 11-24a, \q, -11+16a,\q^3, -5-2a,\\
&&\q^{2a-6}, -12+24a,\q ),\\
 R_6 & = & (\q, 14-20a, \q, -2+6a, \q, -4-6a,\q^{2a-8}, 13-28a,\q, -13+ 20a,\q^3, 6,\\
&&\q^{2a-6}, -14+28a ),\\
R_{7+2i} &= & (\q^{2i}, -16+24a-4i, \q,  16-16a+4i,\q,  4+7a+i, \q, 3-5a+i,\q^{2a-8}, \\
&&  15-24a+4i, \q, -15+16a-4i, \q^3,  -7-2a-2i , \q^{2a-6-2i}),\\
R_{8+2i} & =& (\q^{1+2i}, -18+28a-4i,\q, 18-20a+4i,\q,  -3+6a-i, \q, -5-6a-i,\q^{2a-8}, \\
&& 17-28a+4i, \q, -17+20a-4i,\q^3, 8+2i ,\q^{2a-7-2i}),\\
R_{2a-1} & = &( \q^{2a-8},28a, \q, -20a, \q, 8a,\q,  -12a,\q^{2a-8}, -1-20a, \q, 1+12a, \q^3, 4a, \q^2),\\
R_{2a} & =& (\q^{2a-7},  -2+24a, \q, 2-16a, \q, 1+5a,\q, -9a,\q^{2a-8}, 1-24a, \q, -1+16a, \\
&& \q^3, -1+4a,\q),\\
R_{2a+1} & = & (\q^{2a-6}, -4+28a, \q, 4-20a,\q, 10a, \q, -1 -10a, \q^{2a-8},  3 -28a, \q, -3+ 20a,  \\
&&\q^3, 1),\\
R_{2a+2} & = & ( -2-2a,\q^{2a-6}, -6+24a, \q, 6-16a,\q, 1+ 11a, \q,  1-9a,\q^{2a-8}, 5-24a, \\
&&\q, -5+16a, \q^3),\\
R_{2a+3} & = & (\q, 3,\q^{2a-6}, -8+28a, \q, 8-20a, \q, -1+10a,\q,  -2-10a,\q^{2a-8},  7-28a, \\
&&\q, -7+20a,\q^2),\\
R_{2a+4} & = & (\q^2, -4-2a,\q^{2a-6}, -10+24a, \q, 10-16a,\q,  2+11a, \q,  2-9a,\q^{2a-8},\\
&& 9-24a, \q,  -9+16a,\q ),\\
R_{2a+5} & =& (\q^3, 5,\q^{2a-6}, -12+28a,\q, 12-20a,\q, -2+10a,\q, -3-10a,\q^{2a-8},\\
&& 11-28a,\q, -11+20a ),\\
R_{2a+6} & = & ( -13+16a,\q^3, -6 -2a,\q^{2a-6},-14+ 24a,\q, 14-16a,\q, 3+11a,\q, 3-9a,\\
&&\q^{2a-8}, 13-24a,\q ),\\
R_{2a+7} & =& (\q, -15+20a, \q^3, 7,\q^{2a-6}, -16+28a,\q, 16-20a,\q, -3+10a,\q, -4-10a,\\
&&\q^{2a-8}, 15-28a ),\\
\end{eqnarray*}
 \end{footnotesize}
 \begin{footnotesize}
 \begin{eqnarray*}
R_{2a+8+2i} & = &( \q^{2i}, 17-24a+4i, \q,  -17+16a-4i,\q^3, -8-2a-2i ,\q^{2a-6},\\
&& -18+24a-4i, \q,   18-16a+4i, \q,  4+11a+i,  \q,  4-9a+i,\q^{2a-8-2i}),\\
R_{2a+9+2i} & = & (\q^{1+2i}, 19-28a+4i, \q, -19+20a-4i,\q^3, 9+2i,\q^{2a-6},    -20+28a-4i, \\
   &&\q, 20-20a+4i, \q, -4+10a-i,\q,  -5-10a-i ,\q^{2a-9-2i}),\\
R_{4a} & = & (\q^{2a-8}, 1-28a, \q, -1+20a, \q^3,2a,\q^{2a-6}, -2+28a, \q,  2-20a,\\
&&\q,  1+4a,\q, -1-6a ),
\end{eqnarray*}

\end{footnotesize}
\noindent where $i=0,\ldots,a-5$. Note that each row contains exactly $7$ elements.
It is not hard to see that
\begin{footnotesize}
\begin{gather*}
\begin{array}{rcl}
\nn{\cup_{i=0}^{a-2} R_{2+2i}\cup \cup_{j=0}^{a} R_{2a-1+2j}\cup R_{4a}} & =&
\{1,\ldots,2a\}\cup \{4a,4a+1\}\cup
\{5a+2,\ldots, 7a\}\cup \\
&&\{8a\}\cup \{9a+1,\ldots,11a \}\cup \{12a,12a+1\}\cup\\
&&\{16a,\ldots,20a+1\}\cup \{24a,\ldots,28a\},
\end{array}\\
\begin{array}{rcl}
\nn{\cup_{i=0}^{a-2} R_{1+2i}\cup \cup_{j=0}^{a-1} R_{2a+2j}} &
=& \{2a+1,\ldots,4a-1\}\cup\{4a+2,\ldots,5a+1\}\cup \{7a+1,\ldots,\\
&& 8a-1\}\cup \{8a+1,\ldots,9a\}\cup \{11a+1,\ldots,12a-1 \}\cup\\
&& \{12a+2,\ldots,16a-1\}\cup \{20a+2,\ldots,24a-1\}.
\end{array}
\end{gather*}

\end{footnotesize}
\noindent Hence, $H$ satisfies conditions (a) and (b) of Definition \ref{def:H}.
Now, we list the partial sums for each row. We have

\begin{footnotesize}
\begin{eqnarray*}
\S(R_{1}) & =  & ( -5a, 3-29a, -13a, -1-15a, -5+9a, -1-7a, 0 ), \\
\S(R_{2}) & =  & ( -2-6a, 3-34a, -2-14a, -14a, -6+14a, -6a, 0), \\
\S(R_{3}) & =  & ( 2+7a, 3+2a, 10-22a, 3-6a, -8a, -8+16a, 0 ),\\
 \S(R_{4}) & =  & ( -1+6a, -4, 5-28a, -4-8a, -8a, -10+20a, 0 ), \\
 \S(R_{5}) & =  & ( 12-16a, 15-9a, 17-14a, 28-38a, 17-22a, 12-24a, 0 ), \\
\S(R_{6}) & =  & ( 14 -20a, 12-14a, 8-20a, 21-48a, 8-28a, 14-28a, 0 )\\
\S(R_{7+2i}) & = & ( -16+24a-4i, 8a, 4+15a+i, 7+10a+2i, 22-14a+6i, 7+2a+2i, 0),\\
\S(R_{8+2i}) & = & ( -18+28a-4i, 8a, -3+14a-i, -8+8a-2i, 9-20a+2i, -8-2i, 0 ),\\
\S(R_{2a-1}) & = & ( 28a, 8a, 16a, 4a, -1-16a, -4a, 0  ),\\
\S(R_{2a}) & = & ( -2+24a, 8a, 1+13a, 1+4a, 2-20a, 1-4a, 0 ),\\
\S(R_{2a+1}) & = & ( -4+28a, 8a, 18a, -1+8a, 2-20a, -1, 0  ),\\
\S(R_{2a+2}) & = & ( -2-2a, -8+22a, -2+6a, -1+17a, 8a, 5-16a, 0 ),\\
\S(R_{2a+3}) & = & ( 3, -5+28a, 3+8a, 2+18a, 8a, 7-20a, 0),\\
\S(R_{2a+4}) & = & ( -4-2a, -14+22a, -4+6a, -2+17a, 8a, 9-16a, 0 ),\\
\S(R_{2a+5}) & = & ( 5, -7+28a, 5+8a, 3+18a, 8a, 11-20a, 0 ),\\
\S(R_{2a+6}) & = & ( -13+16a, -19+14a, -33+38a, -19+22a, -16+33a, -13+24a, 0 ),\\
\S(R_{2a+7}) & = & ( -15+20a, -8+20a, -24+48a, -8+28a, -11+38a, -15+28a, 0 ),\\
\S(R_{2a+8+2i}) & = & (17-24a+4i, -8a, -8-10a-2i, -26+14a-6i, -8-2a-2i,\\
&&-4+9a-i, 0),\\
\S(R_{2a+9+2i}) & = & ( 19-28a+4i, -8a, 9-8a+2i, -11+20a-2i, 9+2i, 5+10a+i, 0 ),\\
\S(R_{4a}) & = & ( 1-28a, -8a, -6a, -2+22a, 2a, 1+6a, 0).\\
\end{eqnarray*}

\end{footnotesize}
\noindent Clearly, each row sums to $0$.
By a direct check (keeping in mind previous considerations on partial sums) one can see that the elements of each $\S(R_t)$ are pairwise distinct modulo $14n+1$ and modulo $14n+2$ for any
$n\equiv 0 \pmod 4$.
From the definition of $H$ we obtain the following expression of the columns:

\begin{footnotesize}
\begin{eqnarray*}
C_1 &=& ( -5a,\q, 2+7a,\q, 12-16a,\q, -16+24a,\q^{2a-6}, -2-2a,\q^3, -13+16a, \q, \\
&&17-24a, \q^{2a-8} )^T,\\
C_{2+2i} &= & (\q^{1+2i},  -2-6a-i,\q, -1+6a-i,\q, 14-20a+4i,\q, -18+28a-4i,\q^{2a-6},\\
&&3+2i, \q^3,  -15+20a-4i,\q, 19-28a+4i,\q^{2a-9-2i} )^T,\\
C_{3+2h} & =& (\q^{2+2h},  1-5a+h, \q,3+7a+h,\q,16-16a+4h,\q, -20+24a-4h, \q^{2a-6},\\
&&-4-2a-2h, \q^3,  -17+16a-4h,\q, 21-24a+4h,\q^{2a-10-2h})^T,\\
C_{2a-7} & =& (\q^{2a-8}, -4-4a, \q, -2+8a, \q, -4-12a, \q, 28a,\q^{2a-6}, 6-4a,\q^3, 3+12a,\\
&& \q,  1-28a)^T,\\
C_{2a-6} & = &(3-24a,\q^{2a-8},2-7a,\q,3+5a,\q,-2-16a,\q,-2+24a, \q^{2a-6}, -5+2a,\\
&&\q^3, 1+16a,\q )^T,\\
C_{2a-5} & =& (\q,5-28a, \q^{2a-8},-3-4a,\q,-1+8a,\q,-20a, \q,-4+28a, \q^{2a-6}, 4-4a ,\\
&&\q^3, -1+20a)^T,\\
C_{2a-4} & = & (-3+16a,\q,7-24a, \q^{2a-8},1-7a,\q,2+5a,\q,2-16a,\q, -6+24a,\q^{2a-6},\\
&&-3+2a,\q^3)^T,\\
C_{2a-3} & =& (\q,-5+20a,\q,9-28a,\q^{2a-8},-2-4a,\q,8a,\q,4-20a, \q,-8+28a, \q^{2a-6},\\
&& 2-4a,\q^2)^T,\\
C_{2a-2} & =& (\q^2,-7+16a,\q,11-24a,\q^{2a-8},-7a,\q ,1+5a,\q ,6-16a, \q,-10+24a,\\
&& \q^{2a-6}, -1+2a,\q)^T,\\
C_{2a-1} & =& (\q^3,  -9+20a,\q,13-28a, \q^{2a-8},-12a,\q,10a,\q,8-20a, \q,-12+28a,\\
&&\q^{2a-6}, 2a)^T,\\
C_{2a+2j} & = &  (\q^{2j},-1-2a-2j,\q^3, -11+16a-4j,\q,15-24a+4j,\q^{2a-8},-9a+j,\q,\\
&&1+11a+j,\q,10-16a+4j,\q,-14+24a-4j, \q^{2a-6-2j} )^T,\\
C_{2a+1+2j} & =& (\q^{1+2j},2+2j,\q^3,-13+20a-4j,\q,17-28a+4j,\q^{2a-8},-1-10a-j,\\
&&\q,-1+10a-j,\q, 12-20a+4j,\q,-16+28a-4j,\q^{2a-7-2j})^T,\\
C_{4a-6} & =& (\q^{2a-6},5-4a, \q^3, 1+12a, \q, 3-28a, \q^{2a-8},-3-8a, \q, -2+12a, \q, \\
&&-2-12a, \q,
-2+28a )^T,\\
C_{4a-5} & =& (-4+24a,\q^{2a-6},-4+2a,\q^3,-1+16a,\q,5-24a,\q^{2a-8}, 2-11a,\q, \\
&&2+9a,\q,-16a,\q)^T,\\
C_{4a-4} & =& (\q,-6+28a,\q^{2a-6},3-4a,\q^3, -3+20a,\q,7-28a,\q^{2a-8},-2-8a, \q, \\
&&-1+12a,\q,2-20a)^T,\\
C_{4a-3} & = & ( 4-16a,\q,-8+24a,\q^{2a-6},-2+2a,\q^3,-5+16a,\q,9-24a,\q^{2a-8},1-11a,\\
&&\q,1+9a,\q)^T,\\
C_{4a-2} & =& ( \q,6-20a,\q,-10+28a,\q^{2a-6},4a,\q^3,-7+20a,\q,11-28a, \q^{2a-8}, -1-8a,\\
&&\q,1+4a)^T,\\
C_{4a-1} & =& ( 1+7a,\q,8-16a,\q,-12+24a,\q^{2a-6},-1+4a,\q^3,-9+16a,\q, 13-24a,\\
&&\q^{2a-8},-11a,\q)^T,\\
C_{4a} & =& (\q,6a,\q,10-20a,\q,-14+28a, \q^{2a-6},1,\q^3,-11+20a,\q ,15-28a,\\
&&\q^{2a-8},-1-6a)^T,
 \end{eqnarray*}

\end{footnotesize}
\noindent for $i=0,\ldots,a-5$, $h=0,\ldots,a-6$ and $j=0,\ldots,a-4$. Note that each column contains exactly
$7$ elements, hence $H$ satisfies also condition (c).
One can check that the partial sums for the columns are the following:
\begin{footnotesize}
\begin{eqnarray*}
\S(C_1)  & = & (          -5a,   2+2a,  14-14a,  -2+  10a,   -4+  8a,  -17+ 24a,     0 ),\\
\end{eqnarray*}
 \end{footnotesize}
 \begin{footnotesize}
 \begin{eqnarray*}
\S(C_{2+2i})  & = & (  -2-6a-i,   -3 -2i,  11-20a+ 2i,  -7+8a  -2i,   -4+  8a, \\
&&-19+28a -4i,     0 ),\\
\S(C_{3+2h}) & = & (  1-5a+h, 4+2a+2h, 20 -14a+6h,  10a+2h,   -4+  8a,\\
&&-21+ 24a-4h,     0 ),\\
\S(C_{2a-7})  & = & (  -4  -4a,   -6+  4a,  -10 -8a,  -10+ 20a,  -4+  16a,   -1+ 28a,     0 ),\\
\S(C_{2a-6})  & = & (   3-24a,  5 -31a,  8 -26a,  6 -42a,  4 -18a,  -1 -16a,     0 ),\\
\S(C_{2a-5}) & = & (  5 -28a,  2 -32a,  1 -24a,  1 -44a,  -3 -16a,  1 -20a,     0 ),\\
\S(C_{2a-4}) & = & (    -3+16a,  4  -8a,  5 -15a,  7 -10a,  9 -26a,  3  -2a,     0 ),\\
\S(C_{2a-3})  & = & (   -5+ 20a,   4 -8a,  2 -12a,   2 -4a,  6 -24a,  -2+   4a,     0 ),\\
\S(C_{2a-2})  & = & (  -7+  16a,   4 -8a,  4 -15a,  5 -10a, 11 -26a,   1 -2a,     0 ),\\
\S(C_{2a-1})  & = & (   -9+ 20a,   4 -8a,  4 -20a,  4 -10a, 12 -30a,  -2a,     0 ),\\
\S(C_{2a+2j})  & = & (  -1  -2a-2j,   -12+14a-6j,   3-10a-2j,  3-19a-j,   4 -8a, \\
&&14 -24a+4j,     0 ),\\
 \S(C_{2a+1+2j}) & = & (    2+ 2j, -11+  20a-2j,   6 -8a+2j, 5-18a+j,   4 -8a,  16-28a+4j,     0 ),\\
\S(C_{4a-6})  & = & (   5 -4a,   6+  8a,  9 -20a,  6 -28a,  4 -16a,  2 -28a,     0 ),\\
 \S(C_{4a-5}) & = & (   -4+ 24a,   -8+ 26a,  -9+  42a,  -4+  18a,   -2+  7a,  16a,     0 ),\\
\S(C_{4a-4}) & = & (  -6+  28a,   -3+ 24a,  -6+  44a,  1+  16a,   -1+  8a,  -2 + 20a,     0 ),\\
 \S(C_{4a-3}) & = & (   4-16a,   -4+  8a,  -6+  10a,  -11+ 26a,   -2+  2a,   -1 -9a,     0 ),\\
\S(C_{4a-2})  & = & (   6-20a,   -4+  8a,  -4+  12a, -11+  32a,   4a,   -1 -4a,     0 ),\\
\S(C_{4a-1}) & = & (   1+  7a,  9  -9a,  -3+  15a,   -4+ 19a,  -13+ 35a,  11a,     0 ),\\
 \S(C_{4a}) & = & (   6a, 10 -14a,  -4+  14a,  -3+  14a,  -14+ 34a,   1+  6a,     0).
\end{eqnarray*}

\end{footnotesize}
\noindent Note that each column sums to $0$  and so condition (d) is satisfied, hence $H$ is an
$\H(n;7)$.
By a direct check one can verify that the elements of each $\S(C_t)$ are pairwise distinct modulo $14n+1$ and modulo $14n+2$ for any $n\equiv 0 \pmod 4$.
Thus, for these values of $n$, $H$ is an $\SH^*(n;7)$.
\end{proof}

\begin{prop}\label{SH7_odd}
Let $n\equiv 1 \pmod 4$ and $n\geq 9$. Then, there exists an $\SH^*(n;7)$.
\end{prop}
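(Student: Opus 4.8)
The plan is to follow exactly the template of Proposition~\ref{SH7_even}: exhibit one explicit, cyclically banded $n\times n$ array $H$ and then verify directly conditions (a)--(d) of Definition~\ref{def:H} together with condition~$(\ast)$. First I would dispose of the two smallest cases, $n=9$ and $n=13$, by explicit examples (as was done for $n=8,12$ via \cite{web}), so that in the generic argument one may write $n=4a+1$ with $a$ large enough (say $a\geq 4$) for the row and column patterns below to be well defined and pairwise non-overlapping. Note that $n\equiv 1\pmod 4$ forces $7n\equiv 3\pmod 4$, so every such $n$ is admissible by Theorem~\ref{th:integer}, and the target modulus is $14n+1=56a+15$, respectively $14n+2=56a+16$.

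For the generic construction I would describe the rows $R_t$ as a short list of \emph{sporadic} rows (a handful near the top, a handful near the bottom, and one or two in the middle) together with one or two families $R_{\ast+2i}$ indexed by $i$, each row occupying exactly seven cells along a bounded band of diagonals, just as in Proposition~\ref{SH7_even}. The entries would be chosen (up to signs) so that $\nn{\cup_t R_t}=\{1,2,\dots,7n\}$ with no repetitions; I would verify this by splitting the rows into two groups and showing, for each group, that $\cup_t\nn{R_t}$ is a prescribed finite union of integer intervals, the two unions being complementary in $\{1,\dots,7n\}$. This gives (a) and (b); condition (c) is the bookkeeping statement that each row, and each column read off from the same formulas, contains precisely seven filled cells.

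Next I would tabulate $\S(R_t)$ for every row and $\S(C_t)$ for every column. Condition~(d) then amounts to checking that the last entry of each such sequence is $0$, a finite linear identity in the parameter $a$. For global simplicity and condition~$(\ast)$ I would invoke the observation recorded at the start of Section~\ref{sec4}: since $s_i\neq s_{i+1}$ and $s_i\neq s_{i+2}$ hold automatically for any row or column of an $\H(n;7)$, it suffices to verify $s_i\not\equiv s_{i+3}\pmod\nu$ for $i=1,\dots,7$ and $\nu\in\{14n+1,14n+2\}$ (the cyclic distances $3$ and $4$ coinciding). After substituting the explicit expressions for the partial sums --- which are affine in $a$ and in the family index $i$ --- each of these non-congruences reduces to checking that a small integer linear form in $a$ (and $i$) is never a multiple of $56a+15$ or $56a+16$ in the relevant range; since these forms have absolute value comfortably below $14n+1$ except in a few boundary configurations, this is a routine but lengthy case check.

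The genuine obstacle is purely the \emph{design} of the array: one must find sign patterns and diagonal placements valid for $n\equiv 1\pmod 4$ that simultaneously tile $\{1,\dots,7n\}$, force every row and column to sum to zero, and keep all partial sums away from $0$ modulo \emph{both} $14n+1$ and $14n+2$ --- the last requirement being exactly what the constructions of \cite{ADDY,DW} fail to satisfy. As in the even case, I expect the construction to split into a small number of sub-cases according to the residue of $a$ modulo a fixed integer, with one ``main'' family carrying the bulk of the entries and several ad hoc rows absorbing the leftover values; once the array is written down, all the remaining verifications are finite checks of affine identities and congruences in the single parameter $a$.
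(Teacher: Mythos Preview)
Your plan follows the paper's template exactly---explicit banded array, verification of (a)--(d), then a direct check of the partial sums---and your reduction of the simplicity check for $k=7$ to the single cyclic distance $3$ is correct (it is implicit in the paper's ``by a direct check'' remarks). But what you have written is a strategy, not a proof. The entire content of this proposition is the explicit array, and you have not supplied one; you yourself flag ``the design of the array'' as the genuine obstacle and leave it unresolved. Without the actual entries there is nothing to verify, so as it stands this is not a proof.

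For comparison, the paper's construction is cleaner than you anticipate. A single description---four interleaved families $R_{4+4i},R_{5+4i},R_{6+4i},R_{7+4j}$ (with $i=0,\dots,a-2$, $j=0,\dots,a-3$) together with six sporadic rows $R_1,R_2,R_3,R_{4a-1},R_{4a},R_{4a+1}$---covers \emph{every} $n=4a+1\geq 9$ uniformly. There are no ad hoc small cases (the $\SH^*(9;7)$ displayed after the proof comes straight from the general formula with $a=2$), and there is no split into sub-cases by the residue of $a$. So two of the complications you were bracing for simply do not arise.
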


\begin{proof}
Assume $n=4a+1\geq 9$ and let $H$ be the $n\times n$ array whose rows $R_t$ are the following:
  \begin{footnotesize}
\begin{eqnarray*}
R_1 & = &( -1-4a, -5-16a, -2-7a, 5+12a,\q^{4a-6}, 6+28a, 3+11a, -6-24a ),\\
R_2 & =& ( -5-18a, -4a, -7-22a, -3-8a, 6+14a,\q^{4a-6}, 6+26a, 3+12a),\\
R_3 & =& ( 3+9a, -6-20a, -1+2a, -7-20a, -2-11a, 6+12a,\q^{4a-6},7+28a ),\\
R_{4+4i} & =& (\q^{4i}, 7+26a+2i, 2+8a-i, -6-18a-2i, 1-4a+2i, -8-22a-2i, \\
&&-3-4a-i, 7+14a+2i ,\q^{4a-6-4i}),\\
R_{5+4i} & =& (\q^{1+4i}, 7+24a+2i, 3+5a+i, -6-16a-2i, -2+2a-2i, -8-20a-2i, \\
&&-1-7a+i, 7+12a+2i ,\q^{4a-7-4i}),\\
R_{6+4i} & = & (\q^{2+4i}, 8+26a+2i, 2+12a-i, -7-18a-2i, 2-4a+2i, -9-22a-2i, \\
&&-4-8a-i, 8+14a+2i ,\q^{4a-8-4i}),\\
R_{7+4j} & = &(\q^{3+4j}, 8+24a+2j, 4+9a+j, -7-16a-2j, -3+2a-2j, -9-20a-2j,\\
&&-1-11a+j, 8+12a+2j ,\q^{4a-9-4j}),\\
R_{4a-1} &= & ( 4+14a, \q^{4a-6},4+26a, 2+10a, -3-18a, 1, -5-22a, -3-10a ),\\
R_{4a} & = & ( -2-5a, 4+12a,\q^{4a-6}, 5+28a, 3+7a, -4-20a, -1-2a, -5-20a ),\\
R_{4a+1}& =& ( -6-22a, -2-4a, 5+14a,\q^{4a-6}, 5+26a, 2+6a, -4-18a, -2a),
\end{eqnarray*}

\end{footnotesize}
\noindent where $i=0,\ldots,a-2$ and $j=0,\ldots,a-3$. Note that each row contains exactly $7$ elements.
One can see that

\begin{footnotesize}
\begin{gather*}
\begin{array}{rcl}
\cup_{i=-1}^ 1 \nn{ R_{2+i}\cup R_{4a+i} } & =& \{1,2a-1,2a,2a+1,4a,4a+1,4a+2, 5a+2,6a+2, 7a+2,\\
&& 7a+3,8a+3, 9a+3, 10a+2,10a+3, 11a+2,11a+3\}  \cup\\
&& \{12a+3, \dots,12a+6\}\cup \{14a+4,14a+5,  14a+6,16a+5, \\
&&18a+3,18a+4,18a+5\}\cup  \{20a+4,\dots, 20a+7\}\cup \{22a+5, \\
&& 22a+6,22a+7,24a+6,26a+4,26a+5,  26a+6,28a+5,\\
&& 28a+6,28a+7 \},
\end{array}\\
\begin{array}{rcl}
\nn{\cup_{i=0}^{4a-6} R_{4+i} } & =& \{2,\ldots,2a-2\}\cup \{2a+2,\ldots,4a-1\}\cup\{4a+3,\ldots,5a+1\}\cup \\
&& \{5a+3,\ldots,6a+1\}\cup \{6a+3,\ldots,7a+1\}\cup\{7a+4,\ldots,8a+2\}\cup \\
&&\{8a+4,\ldots,9a+2\}\cup \{9a+4,\ldots,10a+1\}\cup\{10a+4,\ldots,11a+1\}\cup\\
&&\{11a+4,\ldots,12a+2\}\cup \{12a+7,\ldots,14a+3\}\cup \{14a+7,\ldots,16a+4\}\cup \\
&& \{16a+6,\ldots,18a+2\}\cup \{18a+6,\ldots,20a+3\}\cup \{20a+8,\ldots,22a+4\}\cup\\
&& \{22a+8,\ldots,24a+5\}\cup\{24a+7,\ldots,26a+3\}\cup \{26a+7,\ldots,28a+4\}. \\
\end{array}
\end{gather*}

\end{footnotesize}
\noindent Hence, $H$ satisfies conditions (a) and (b) of Definition \ref{def:H}.
Now, we list the partial sums for each row. We have
\begin{footnotesize}
\begin{eqnarray*}
\S(R_{1}) & =  & (  -1-4a, -6-20a, -8-27a, -3-15a, 3+13a, 6+24a, 0  ), \\
\S(R_{2}) & =  & ( -5-18a, -5-22a, -12-44a, -15-52a, -9-38a, -3-12a, 0  ), \\
\S(R_{3}) & =  & (  3+9a, -3-11a, -4-9a, -11-29a, -13-40a, -7-28a, 0  ), \\
\S(R_{4+4i}) & =  & (  7+26a+2i, 9+34a+i, 3+16a-i, 4+12a+i, -4-10a-i, \\
&&-7-14a-2i, 0  ), \\
\S(R_{5+4i}) & =  & (  7+24a+2i, 10+29a+3i, 4+13a+i, 2+15a-i, -6-5a-3i, \\
&&-7-12a-2i, 0  ), \\
\S(R_{6+4i}) & =  & ( 8+26a+2i, 10+38a+i, 3+20a-i, 5+16a+i, -4-6a-i,\\
&&-8-14a-2i, 0 ),\\
\S(R_{7+4j}) & =  & (  8+24a+2j, 12+33a+3j, 5+17a+j, 2+19a-j, -7-a-3j,\\
&&-8-12a-2j, 0  ), \\
\S(R_{4a-1}) & =  & (  4+14a, 8+40a, 10+50a, 7+32a, 8+32a, 3+10a, 0  ), \\
\S(R_{4a}) & =  & (  -2-5a, 2+7a, 7+35a, 10+42a, 6+22a, 5+20a, 0  ), \\
\S(R_{4a+1}) & =  & (  -6-22a, -8-26a, -3-12a, 2+14a, 4+20a, 2a, 0  ).
\end{eqnarray*}

\end{footnotesize}
\noindent Note that each row sums to $0$.
By a direct check one can verify that the elements of each $\S(R_t)$ are pairwise distinct modulo $14n+1$ and modulo $14n+2$ for any $n\equiv 1 \pmod 4$.
From the definition of $H$ we obtain the following  expression of the columns:
\begin{footnotesize}
\begin{eqnarray*}
C_1 & = & (-1-4a, -5-18a, 3+9a, 7+26a,\q^{4a-6},  4+14a, -2-5a,-6-22a)^T,\\
C_2 & = & ( -5-16a,-4a,-6-20a,2+8a,7+24a, \q^{4a-6},  4+12a,-2-4a )^T,\\
C_3 & = & ( -2-7a , -7-22a,-1+2a , -6-18a,3+5a , 8+26a, \q^{4a-6}, 5+14a )^T,\\
C_{4+4i} & = & (\q^{4i},  5+12a +2i,-3-8a-i,-7-20a-2i, 1-4a+2i,-6-16a-2i,   \\
&& 2+12a-i,8+24a+2i,\q^{4a-6-4i})^T,\\
C_{5+4i} & = & (\q^{1+4i},  6+14a+2i, -2-11a+i, -8-22a-2i, -2+2a-2i, -7-18a-2i, \\
&&4+9a+i, 9+26a+2i,
\q^{4a-7-4i})^T,\\
C_{6+4i} & = &( \q^{2+4i},  6+12a+2i, -3-4a-i,-8-20a-2i, 2-4a+2i,-7-16a-2i,\\
&&1+8a-i, 9+24a+2i,\q^{4a-8-4i} )^T,\\
C_{7+4j} & = & (\q^{3+4j},  7+14a+2j, -1-7a+j, -9-22a-2j, -3+2a-2j, -8-18a-2j, \\
&&4+5a+j,
10+26a+2j, \q^{4a-9-4j} )^T,\\
 C_{4a-1} & = & ( 6+28a, \q^{4a-6}, 3+16a, -3-6a, -5-24a, 1, -4-20a, 2+6a)^T,\\
 C_{4a} & = &  ( 3+11a,6+26a ,\q^{4a-6},  3+14a,-2-9a,-5-22a,-1-2a,-4-18a)^T,\\
C_{4a+1} &= &  ( -6-24a, 3+12a, 7+28a ,\q^{4a-6},  4+16a, -3-10a, -5-20a,-2a)^T,
\end{eqnarray*}

\end{footnotesize}
\noindent where $i=0,\ldots,a-2$ and $j=0,\ldots,a-3$. Note that each column contains exactly $7$ elements, hence $H$
satisfies also condition (c).
One can check that the partial sums for the columns are the following:
\begin{footnotesize}
\begin{eqnarray*}
\S(C_{1}) & =  & ( -1-4a, -6-22a, -3-13a, 4+13a, 8+27a, 6+22a, 0  ), \\
\S(C_{2}) & =  & ( -5-16a, -5-20a, -11-40a, -9-32a, -2-8a, 2+4a, 0  ), \\
\S(C_{3}) & =  & ( -2-7a, -9-29a, -10-27a, -16-45a, -13-40a, -5-14a, 0  ), \\
\S(C_{4+4i}) & =  & (   5+12a+2i,   2+4a +    i,   -5-16a  -i,  -4-20a+    i,  -10-36a  -i,\\
&&-8-24a -2i,          0), \\
\S(C_{5+4i}) & =  & ( 6+14a+  2i,  4+3a+   3i,  -4-19a+    i,   -6-17a  -i, -13-35a -3i,\\
&&-9-26a -2i,            0   ),\\
\S(C_{6+4i}) & =  & ( 6+12a+ 2i,   3+8a+    i,   -5-12a  -i,  -3-16a +   i,  -10-32a  -i, \\
&&-9-24a-2i,
  0  ), \\
\S(C_{7+4j}) & =  & ( 7+14a+2j,  6+  7a+3j,  -3   -15a+j,   -6  -13a-j,  -14-31a-3j,  \\
&&-10-26a-2j,             0 ), \\
\S(C_{4a-1}) & =  & ( 6+28a, 9+44a, 6+38a, 1+14a, 2+14a, -2-6a, 0), \\
\S(C_{4a}) & =  & (  3+11a, 9+37a, 12+51a, 10+42a, 5+20a, 4+18a, 0  ), \\
\S(C_{4a+1}) & =  & ( -6-24a, -3-12a, 4+16a, 8+32a, 5+22a, 2a, 0  ).
\end{eqnarray*}

\end{footnotesize}
\noindent Since each column sums to $0$, condition (d) holds and hence $H$ is an $\H(n;7)$.
By a direct check one can verify that the elements of each $\S(C_t)$ are pairwise distinct modulo $14n+1$ and modulo $14n+2$ for any $n\equiv 1 \pmod 4$.
Thus, for these values of $n$, $H$ is an $\SH^*(n;7)$.
\end{proof}

\begin{ex}
Let $n=9$, by the construction given in the proof of Proposition \ref{SH7_odd}, we obtain the following
$\SH^*(9;7)$:

\begin{footnotesize}
$$
\begin{array}{|c|c|c|c|c|c|c|c|c|}\hline
-9 & -37 & -16 & 29 &  &  & 62 & 25 & -54 \\\hline
-41 & -8 & -51 & -19 & 34 &  &  & 58 & 27 \\\hline
21 & -46 & 3 & -47 & -24 & 30 &  &  & 63 \\\hline
59 & 18 & -42 & -7 & -52 & -11 & 35 &  &  \\\hline
 & 55 & 13 & -38 & 2 & -48 & -15 & 31 &  \\\hline
 &  & 60 & 26 & -43 & -6 & -53 & -20 & 36 \\\hline
32 &  &  & 56 & 22 & -39 & 1 & -49 & -23 \\\hline
-12 & 28 &  &  & 61 & 17 & -44 & -5 & -45 \\\hline
-50 & -10 & 33 &  &  & 57 & 14 & -40 & -4 \\\hline
\end{array}$$
\end{footnotesize}
\end{ex}

\begin{prop}\label{0,2}
Let $n\geq 8$ be even. Then, there exists an $\SH^*(n;8)$.
\end{prop}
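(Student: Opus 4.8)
The plan is to imitate the explicit constructions of Propositions~\ref{prop:6}, \ref{SH7_even} and \ref{SH7_odd}: we write down an $n\times n$ partially filled array $H$ by giving closed formulas for its rows $R_t$ (using $\q^t$ for a block of $t$ empty cells), laid out so that the absolute values of the entries of suitably paired rows form consecutive intervals of $\{1,\dots,8n\}$. Since $k=8$ and $n$ is even, $nk=8n\equiv 0\pmod 4$, so every even $n\geq 8$ is admissible in the sense of Theorem~\ref{th:integer}; the array of Example~\ref{10.8} realizes $n=10$, and for larger $n$ one assembles $H$ from a bounded number of ``boundary'' rows together with a repeating block indexed by a parameter $i$ ranging over an arithmetic progression (if convenient, splitting the argument according to the residue of $n$ modulo $4$, as was done for $k=7$). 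Once $H$ is fixed, conditions (a)--(d) of Definition~\ref{def:H} are checked exactly as in the previous proofs: each row has precisely $8$ filled cells; the union of the sets $\nn{R_{2h-1}\cup R_{2h}}$ (or of the appropriate pairs of rows) equals $\{1,\dots,8n\}$, which since there are $8n$ filled cells and $8n$ available absolute values gives (a) and (b); the lists $\S(R_t)$ each end in $0$, giving (d) for the rows; and, reading the columns $C_t$ off the array, the lists $\S(C_t)$ each have $8$ entries and end in $0$ as well, giving (c) and (d).

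The substantive part is the verification of simplicity together with condition $(\ast)$: for every row and every column the natural ordering must be simple modulo $\nu$ for both $\nu=16n+1$ and $\nu=16n+2$. Since the total of each line is $0$ and no $2$-subset $\{x,-x\}$ occurs, the elementary remark recalled before Proposition~\ref{prop:6} shows that in the natural ordering $(a_1,\dots,a_8)$ of a line the partial sums $s_b$ and $s_c$ differ modulo $\nu$ whenever $c-b\in\{1,2\}$; using the identity $a_{b+1}+\dots+a_c=-(a_{c+1}+\dots+a_8+a_1+\dots+a_b)$ the same holds for $c-b\in\{6,7\}$, and the remaining cases $c-b\in\{3,4,5\}$ amount exactly to requiring that no $3$ and no $4$ cyclically consecutive entries of the line sum to $0$ modulo $\nu$. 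A sum of $3$ such entries has absolute value at most $24n<2(16n+1)$ and a sum of $4$ such entries has absolute value at most $32n<2(16n+2)$, so such a sum is divisible by $\nu$ only if its integer value lies in the finite set $\{0,\pm(16n+1),\pm(16n+2)\}$; note that $16n+1\leq 24n$, so the values $\pm\nu$ are genuinely in range and must be excluded alongside $0$.

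Consequently the whole simplicity check reduces to a finite verification: for each of the finitely many ``shapes'' of a window of $3$ or $4$ consecutive entries occurring in the rows and columns of $H$ — each window sum being an affine function of $n$ and of the block index $i$ — one shows that it never equals $0$, $\pm(16n+1)$ or $\pm(16n+2)$. This is the main obstacle, and essentially the reason the formulas for $H$ must be chosen with care: the array has to be arranged so that all these (uniformly many) potential collisions are avoided simultaneously for every even $n\geq 8$. Carrying out this routine but lengthy case analysis, together with the checks of (a)--(d) above, establishes that $H$ is an $\SH^*(n;8)$.
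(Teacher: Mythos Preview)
Your proposal is an outline, not a proof: the entire content of Proposition~\ref{0,2} is the explicit description of the array $H$, and you never give one.  You correctly identify the verification strategy, and your reduction of the simplicity check to the vanishing (modulo $\nu$) of the $16$ cyclic windows of length $3$ and $4$ in each line is valid and tidy---indeed it is a cleaner way to organise the computation than the paper's brute listing of all partial sums.  But this reduction only tells you \emph{what} to check once $H$ is in hand; it does nothing to produce $H$.  As you yourself say, ``the formulas for $H$ must be chosen with care'' so that all the window sums simultaneously avoid $\{0,\pm(16n+1),\pm(16n+2)\}$; finding such formulas is precisely the substance of the proposition, and you have left it out.

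Two smaller points.  First, the paper's construction for $k=8$ splits according to $n\bmod 6$ (one family for $n\equiv 0,2\pmod 6$, a different one for $n\equiv 4\pmod 6$), not $n\bmod 4$ as you suggest by analogy with $k=7$; the reason is that the obstructions to simplicity modulo $16n+2$ in the natural family are governed by the residue of $n$ modulo $3$, so a single formula cannot cover all even $n$.  Second, Example~\ref{10.8} does give an $\SH^*(10;8)$, but it is itself an instance of the general $n\equiv 4\pmod 6$ construction, so citing it does not dispose of a base case independently of the missing family.
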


\begin{proof}
An $\SH^*(8;8)$ can be found in \cite{web}. So, assume $n \geq 10$.\\
\underline{Case 1.} $n\equiv 0,2 \pmod 6$. Let $H$ be the $n\times n$ array whose rows $R_t$ are the following:
\begin{footnotesize}
\begin{eqnarray*}
R_{1}& = &( -3+8n, 8n, 1-8n, 10-8n, 9-8n, 2-8n,\q^{n-8},-11+8n, -8+8n),\\
R_{2+2i} & =  & (\q^{2+2i}, -17-16i, -20-16i, -25-16i, -28-16i, 27+16i, 18+16i, 19+16i,\\
&&  26+16i, \q^{n-10-2i}), \\
R_{3+2j} & =  & (\q^{2j}, 5+16j, 8+16j, 13+16j, 16+16j, -15-16j,-6-16j,-7-16j,\\
&& -14-16j,\q^{n-8-2j}),\\
R_{n-6}& =& ( -45+8n, -38+8n,\q^{n-8},47-8n, 44-8n, 39-8n, 36-8n, -37+8n,\\
&& -46+8n),\\
R_{n-4}& = &( -21+8n,-30+8n, -29+8n, -22+8n,\q^{n-8},31-8n, 28-8n, 23-8n, \\
&& 20-8n ),\\
\end{eqnarray*}
 \end{footnotesize}
 \begin{footnotesize}
 \begin{eqnarray*}
R_{n-3}& = &(41-8n, 34-8n,\q^{n-8}, -43+8n,-40+8n, -35+8n, -32+8n,33-8n,\\
&& 42-8n ),\\
R_{n-2}& = &(7-8n, 4-8n, -5+8n, -14+8n, -13+8n, -6+8n,\q^{n-8},15-8n, 12-8n),\\
R_{n-1}&  =& (17-8n, 26-8n, 25-8n, 18-8n,\q^{n-8},-27+8n, -24+8n,-19+8n,\\
&& -16+8n ),\\
R_{n} & =  & ( -1, -4, -9, -12, 11,2,3,10,\q^{n-8}),
\end{eqnarray*}

\end{footnotesize}
\noindent where $i=0,\ldots,\frac{n-10}{2}$ and $j=0,\ldots,\frac{n-8}{2}$. Note that each row has exactly $8$ filled cells.
It is easy to see that $\nn{R_{3}\cup R_{n}}=\{1,\ldots,16\}$,
$\nn{R_{1}\cup R_{n-2}}=\{8n-15,\ldots,8n\}$, and
$\nn{R_{2h}\cup R_{2h+3}}=\{1+16h,\ldots,16+16h\}$,  for all
$h=1,\ldots,\frac{n-4}{2}$.
Hence, conditions (a) and (b) of Definition \ref{def:H} hold. Now, we list the
partial sums for each row. We have
\begin{footnotesize}
\begin{eqnarray*}
\S(R_{1}) & = & (-3+8n, -3+16n, -2+8n, 8, 17-8n, 19-16n, 8-8n, 0 ),\\
\S(R_{2+2i}) & =  & (-17-16i, -37-32i, -62-48i, -90-64i, -63-48i, -45-32i, \\
&&-26-16i, 0), \\
\S(R_{3+2j}) & = & (5+16j, 13+32j, 26+48j, 42+64j, 27+48j, 21+32j, 14+16j, 0 ),\\
\S(R_{n-6}) & = & ( -45+8n, -83+16n, -36+8n, 8, 47-8n, 83-16n, 46-8n, 0  ),\\
\S(R_{n-4}) & = & (-21+8n, -51+16n, -80+24n, -102+32n, -71+24n, -43+16n, \\
&& -20+8n, 0 ),\\
\S(R_{n-3}) & = & (41-8n, 75-16n, 32-8n, -8, -43+8n, -75+16n, -42+8n, 0  ),\\
\S(R_{n-2}) & = & (7-8n, 11-16n, 6-8n, -8,-21+8n, -27+16n, -12+8n, 0),\\
\S(R_{n-1}) & = & ( 17-8n, 43-16n, 68-24n, 86-32n, 59-24n, 35-16n, 16-8n, 0  ),\\
\S(R_{n}) & =  & (-1, -5, -14, -26,-15, -13, -10, 0).
\end{eqnarray*}

\end{footnotesize}
\noindent It is not so hard to see that the elements of each $\S(R_t)$ are pairwise distinct both modulo $16n+1$ and
modulo
$16n+2$ for any even integer $n$.
From the definition of $H$ we obtain the following expression of the columns:

\begin{footnotesize}
\begin{eqnarray*}
C_1 & = & (  -3+8n,\q,5,\q^{n-10}, -45+8n,\q, -21+8n, 41-8n, 7-8n, 17-8n, -1)^T,\\
C_2 & = &( 8n,\q, 8,\q^{n-10}, -38+8n,\q, -30+8n, 34-8n, 4-8n, 26-8n, -4)^T, \\
C_3 & = & ( 1-8n, -17,13,\q,21,\q^{n-10}, -29+8n,\q, -5+8n, 25-8n, -9)^T,\\
C_4 & = & (10-8n, -20, 16,\q, 24,\q^{n-10}, -22+8n,\q, -14+8n, 18-8n, -12)^T,\\
C_5 & = & ( 9-8n, -25, -15, -33, 29,\q, 37,\q^{n-10}, -13+8n,\q, 11 )^T,\\
C_6 & = & (2-8n,-28,-6,-36,32,\q,40,\q^{n-10}, -6+8n,\q, 2 )^T,\\
C_{7} & = & (\q,27,-7,-41,-31,-49,45,\q,53, \q^{n-10}, 3)^T,\\
C_{8} & = & (\q,18,-14,-44,-22,-52,48,\q,56, \q^{n-10}, 10)^T,\\
C_{9+2i} & = & (\q^{1+2i}, 19+16i,\q, 43+16i, -23-16i, -57-16i, -47-16i, -65-16i,\\
&& 61+16i,\q, 69+16i, \q^{n-11-2i} )^T,\\
C_{10+2i} & = & (\q^{1+2i}, 26+16i,\q, 34+16i, -30-16i, -60-16i, -38-16i, -68-16i,\\
&&64+16i,\q, 72+16i, \q^{n-11-2i})^T,\\
C_{n-1} & = & (-11+8n,\q^{n-10}, -61+8n,\q, -37+8n, 57-8n, 23-8n, 33-8n, 15-8n,\\
&&-19+8n,  \q)^T,\\
C_{n} & = & (-8+8n,\q^{n-10}, -54+8n,\q, -46+8n, 50-8n, 20-8n, 42-8n, 12-8n,\\
&& -16+8n, \q)^T,
\end{eqnarray*}

\end{footnotesize}
\noindent where $i=0,\ldots,\frac{n-12}{2}$.
Each column  has $8$ filled cells, hence $H$ satisfies also condition (c).
The partial sums for the columns are the following:

\begin{footnotesize}
\begin{eqnarray*}
\S(C_{1}) & =  & ( -3+8n, 2+8n, -43+16n, -64+24n, -23+16n, -16+8n, 1, 0  ), \\
\S(C_{2}) & =  & ( 8n, 8+8n, -30+16n, -60+24n, -26+16n, -22+8n, 4, 0  ), \\
\S(C_{3}) & =  & ( 1-8n, -16-8n, -3-8n, 18-8n, -11, -16+8n, 9, 0  ), \\
\S(C_{4}) & =  & (  10-8n, -10-8n, 6-8n, 30-8n, 8, -6+8n, 12, 0  ), \\
\S(C_{5}) & =  & ( 9-8n, -16-8n, -31-8n, -64-8n, -35-8n, 2-8n, -11, 0 ), \\
\S(C_{6}) & =  & ( 2-8n, -26-8n, -32-8n, -68-8n, -36-8n, 4-8n, -2, 0   ), \\
\S(C_{7}) & =  & ( 27, 20, -21, -52, -101, -56, -3, 0  ), \\
\S(C_{8}) & =  & ( 18, 4, -40, -62, -114, -66, -10, 0  ), \\
\S(C_{9+2i}) & =  & ( 19+16i, 62+32i, 39+16i, -18, -65-16i, -130-32i, -69-16i, 0 ), \\
\S(C_{10+2i}) & = & (26+16i, 60+32i, 30+16i, -30, -68-16i, -136-32i, -72-16i, 0),\\
\S(C_{n-1}) & =  & ( -11+8n, -72+16n, -109+24n, -52+16n, -29+8n, 4, 19-8n, 0 ), \\
\S(C_{n}) & = & (-8+8n, -62+16n, -108+24n, -58+16n, -38+8n, 4, 16-8n, 0 ).
\end{eqnarray*}

\end{footnotesize}
\noindent Since each column sums to $0$, also condition (d) is satisfied. Hence $H$ is an integer Heffter array.
Then, again by a direct check, one can see that the elements of each $\S(C_t)$ are always pairwise distinct modulo $16n+1$ for any even $n$.
As, by hypothesis,  $n\equiv 0$ or $2\pmod{6}$, the partial sums are distinct also modulo $16n+2$. Thus, for these values of $n$,
$H$ is an $\SH^*(n;8)$.

\noindent \underline{Case 2.}  $n\equiv 4\pmod{6}$.
Let $H$ be the $n\times n$ array whose rows $R_t$ are the following:
\begin{footnotesize}
\begin{eqnarray*}
R_{1}& = &( -3+8n, 8n, 2-8n, 9-8n, 10-8n, 1-8n,\q^{n-8},-11+8n, -8+8n),\\
R_{2+2i} & =  & ( \q^{2+2i}, -17-16i, -20-16i, -25-16i, -28-16i, 26+16i, 19+16i, 18+16i,\\
&& 27+16i, \q^{n-10-2i} ), \\
R_{3+2j} & =  & (\q^{2j}, 5+16j, 8+16j, 13+16j, 16+16j, -14-16j,-7-16j,-6-16j,\\
&& -15-16j,\q^{n-8-2j}),\\
R_{n-6}& =& ( -46+8n, -37+8n,\q^{n-8},47-8n, 44-8n, 39-8n, 36-8n, -38+8n,\\
&&-45+8n),\\
R_{n-3}& = &(42-8n, 33-8n,\q^{n-8}, -43+8n, -40+8n, -35+8n, -32+8n, 34-8n,\\
&&41-8n ),\\
R_{n-2}& = &(7-8n, 4-8n, -6+8n, -13+8n, -14+8n, -5+8n,\q^{n-8},15-8n, 12-8n),\\
R_{n-1}&  =& (18-8n, 25-8n, 26-8n, 17-8n,\q^{n-8},-27+8n, -24+8n, -19+8n,\\
&&-16+8n ),\\
R_{n} & =  & (-1, -4, -9, -12, 10,3,2,11,\q^{n-8}),
\end{eqnarray*}

\end{footnotesize}
\noindent where $i=0,\ldots,\frac{n-10}{2}$ and $j=0,\ldots,\frac{n-8}{2}$. Firstly, we note that each row of $H$ has $8$ filled cells.
Then, one can check that $\left\|R_{3}\cup R_{n}\right\|=\{1,\ldots,16\}$,
$\left\|R_{1}\cup R_{n-2}\right\|=\{8n-15,\ldots,8n\}$, and $\left\|R_{2h}\cup R_{2h+3}\right\|=\{1+16h,\ldots,16+16h\}$,  for all
$h=1,\ldots,\frac{n-4}{2}$. Hence, $H$ satisfies conditions (a) and (b) of Definition \ref{def:H}. Now, we list the
partial sums for each row. We have
\begin{footnotesize}
\begin{eqnarray*}
\S(R_{1})& = &( -3+8n, -3+16n, -1+8n, 8, 18-8n, 19-16n, 8-8n, 0 ),\\
\S(R_{2+2i})& = &( -17-16i, -37-32i, -62-48i, -90-64i, -64-48i, -45-32i, \\
&&-27-16i, 0 ),\\
\S(R_{3+2j}) & = & (5+16j, 13+32j, 26+48j, 42+64j, 27+48j, 21+32j, 14+16j, 0 ),\\
\S(R_{n-6})& = &( -46+8n, -83+16n, -36+8n, 8, 47-8n, 83-16n, 45-8n, 0 ),\\
\S(R_{n-4})& = &( -22+8n, -51+16n, -81+24n, -102+32n, -71+24n, -43+16n, \\
&&-20+8n, 0 ),\\
\S(R_{n-3})& = &( 42-8n, 75-16n, 32-8n, -8, -43+8n, -75+16n, -41+8n, 0 ),\\
\end{eqnarray*}
\end{footnotesize}
\begin{footnotesize}
\begin{eqnarray*}
\S(R_{n-2})& = &( 7-8n, 11-16n, 5-8n, -8, -22+8n, -27+16n, -12+8n, 0 ),\\
\S(R_{n-1})& = &( 18-8n, 43-16n, 69-24n, 86-32n, 59-24n, 35-16n, 16-8n, 0 ),\\
\S(R_{n})& = &( -1, -5, -14, -26, -16, -13, -11, 0 ).
\end{eqnarray*}

\end{footnotesize}
\noindent Note that each row sums to $0$.
It is not hard  to check that the elements of each $\S(R_t)$ are pairwise distinct modulo $16n+1$ and modulo $16n+2$,
for any even $n$.
From the definition of $H$ we obtain the following expression of the columns:
\begin{footnotesize}
\begin{eqnarray*}
C_1 & = & (  -3+8n,\q,5,\q^{n-10}, -46+8n,\q, -22+8n, 42-8n, 7-8n, 18-8n, -1)^T,\\
C_2 & = &( 8n,\q, 8,\q^{n-10}, -37+8n,\q, -29+8n, 33-8n, 4-8n, 25-8n, -4)^T, \\
C_3 & = & ( 2-8n, -17,13,\q,21,\q^{n-10}, -30+8n,\q, -6+8n, 26-8n, -9)^T,\\
C_4 & = & (9-8n, -20, 16,\q, 24,\q^{n-10}, -21+8n,\q, -13+8n, 17-8n, -12)^T,\\
C_5 & = & ( 10-8n, -25, -14, -33, 29,\q, 37,\q^{n-10}, -14+8n,\q, 10 )^T,\\
C_6 & = & (1-8n,-28,-7,-36,32,\q,40,\q^{n-10}, -5+8n,\q, 3 )^T,\\
C_{7} & = & (\q,26,-6,-41,-30,-49,45,\q,53, \q^{n-10}, 2)^T,\\
C_{8} & = & (\q,19,-15,-44,-23,-52,48,\q,56, \q^{n-10}, 11)^T,\\
C_{9+2i} & = & (\q^{1+2i}, 18+16i,\q, 42+16i, -22-16i, -57-16i, -46-16i, -65-16i,\\
&& 61+16i,\q, 69+16i, \q^{n-11-2i} )^T,\\
C_{10+2i} & = & ( \q^{1+2i}, 27+16i,\q, 35+16i, -31-16i, -60-16i, -39-16i, -68-16i, \\
&&  64+16i,\q, 72+16i, \q^{n-11-2i})^T,\\
C_{n-1} & = & ( -11+8n,\q^{n-10}, -62+8n,\q, -38+8n, 58-8n, 23-8n, 34-8n, \\
&& 15-8n, -19+8n , \q)^T,\\
C_{n} & = & (-8+8n,\q^{n-10}, -53+8n,\q, -45+8n, 49-8n, 20-8n, 41-8n, \\
&& 12-8n, -16+8n ,  \q)^T,
\end{eqnarray*}

\end{footnotesize}
\noindent  where $i=0,\ldots,\frac{n-12}{2}$.
Since also each column has exactly $8$ filled cells, then condition (c) is satisfied.
Now we list the partial sums for the columns:

\begin{footnotesize}
\begin{eqnarray*}
\S(C_{1}) & =  & ( -3+8n, 2+8n, -44+16n, -66+24n, -24+16n, -17+8n, 1, 0 ),\\
\S(C_{2}) & =  & ( 8n, 8+8n, -29+16n, -58+24n, -25+16n, -21+8n, 4, 0 ), \\
\S(C_{3}) & =  & ( 2-8n, -15-8n, -2-8n, 19-8n, -11, -17+8n, 9, 0 ), \\
\S(C_{4}) & =  & (9-8n, -11-8n, 5-8n, 29-8n, 8, -5+8n, 12, 0 ), \\
\S(C_{5}) & =  & ( 10-8n, -15-8n, -29-8n, -62-8n, -33-8n, 4-8n, -10, 0 ),\\
\S(C_{6}) & =  & ( 1-8n, -27-8n, -34-8n, -70-8n, -38-8n, 2-8n, -3, 0 ),\\
\S(C_{7}) & =  & (  26, 20, -21, -51, -100, -55, -2, 0  ), \\
\S(C_{8}) & =  & (  19, 4, -40, -63, -115, -67, -11, 0 ), \\
\S(C_{9+2i}) & =  & (18+16i, 60+32i, 38+16i, -19, -65-16i, -130-32i, -69-16i, 0  ), \\
\S(C_{10+2i}) & = & ( 27+16i, 62+32i, 31+16i, -29, -68-16i, -136-32i, -72-16i, 0 ),\\
\S(C_{n-1}) & =  & (  -11+8n, -73+16n, -111+24n, -53+16n, -30+8n, 4, 19-8n, 0 ), \\
\S(C_{n}) & = & (  -8+8n, -61+16n, -106+24n, -57+16n, -37+8n, 4, 16-8n, 0 ).
\end{eqnarray*}

\end{footnotesize}
\noindent Each column sums to $0$, hence $H$ satisfies also condition (d).
So, $H$ is an integer Heffter array.
Finally, again by a direct check, one can see that, since $n\equiv 4 \pmod{6}$,
the elements of each $\S(C_t)$ are pairwise distinct both modulo $16n+1$ and modulo $16n+2$.
Thus, $H$ is an $\SH^*(n;8)$ for any $n\equiv 4 \pmod 6$.
\end{proof}

We point out that the $\SH^*(10;8)$  given in Example \ref{10.8} has been obtained following the proof of Proposition \ref{0,2}.

\begin{prop}\label{1,3}
Let $n\geq 9$ be odd. Then, there exists an $\SH^*(n;8)$.
\end{prop}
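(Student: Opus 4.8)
The plan is to prove the statement by an explicit direct construction, entirely parallel to the ones used for Propositions \ref{SH7_even}--\ref{0,2}, but adapted to odd $n$. As in those cases a few small orders (say $n=9,11,\dots$) will presumably be treated separately, with the corresponding arrays recorded in \cite{web}, so that one may assume $n$ is large enough for the generic pattern to make sense. Note that, since $k=8$, the divisibility condition $nk\equiv 0,3\pmod 4$ of Theorem \ref{th:integer} holds automatically, so every odd $n\geq 9$ is admissible and no further arithmetic restriction on $n$ intervenes.

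First I would split into cases according to the residue of $n$ modulo $6$ (so $n\equiv 1,3,5\pmod 6$, since $n$ is odd): the partition of the entries into blocks of consecutive integers and the placement of the empty cells is the same in spirit for all classes, but the precise boundary rows and, more importantly, the behaviour of the partial sums modulo $16n+2$ differ from one class to another, so the construction (and the grouping of residue classes) will be chosen exactly so as to make the simplicity conditions hold. Within each case I would define the $n$ rows $R_t$ as sequences of filled cells interleaved with blocks $\q^t$ of empty cells: a \emph{generic band} consisting of rows $R_{2+2i}$ and $R_{3+2j}$, each carrying a block of $8$ consecutive integers in absolute value, suitably signed so that the row sum is $0$, together with a bounded number of \emph{boundary rows} $R_1,\dots,R_{n-6},\dots,R_n$ that absorb the extremal values $\{8n-15,\dots,8n\}$ and the smallest values $\{1,\dots,16\}$. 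The placement is arranged so that every row has exactly $8$ filled cells and so that $\nn{\cup_t R_t}=\{1,\dots,8n\}$; together with a short computation of each $\S(R_t)$ this gives conditions (a)--(d) of Definition \ref{def:H} for the rows.

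Next I would read off the columns $C_1,\dots,C_n$ from the resulting array, check that each again has exactly $8$ filled cells, compute the partial sums $\S(C_t)$, and verify that each column sums to $0$; this establishes that $H$ is an integer Heffter array $\H(n;8)$. The final, and decisive, step is the simplicity verification: one must show that, for every row and every column, the entries of the associated sequence of partial sums are pairwise distinct both modulo $16n+1$ and modulo $16n+2$. Using the remark at the start of Section \ref{sec4} (consecutive partial sums, and partial sums two apart, are automatically distinct) this reduces to finitely many inequalities $s_i\not\equiv s_{i+r}\pmod{\nu}$ with $r\geq 3$ and $\nu\in\{16n+1,16n+2\}$, each a linear condition in $n$ that can be checked directly, class by class.

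I expect the main obstacle to be precisely this last step in the case $\nu=16n+2$. The congruence $16n\equiv -2\pmod{16n+2}$ interacts with the block size $16$ of the construction, so certain would-be collisions among partial sums cancel only for some residues of $n$ modulo a small modulus; forcing the boundary rows and columns — not just the generic band — to satisfy the harder $16n+2$ condition rather than merely the $16n+1$ one is the delicate part and is what dictates the case split. Everything else is routine but lengthy bookkeeping, which I would organise, as above, by displaying in turn the rows, their partial sums, the columns, and their partial sums for each residue class of $n$.
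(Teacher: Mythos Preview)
Your plan is reasonable and would in principle work, but the paper takes a genuinely different and much shorter route. Rather than build an odd-$n$ array from scratch with a threefold split $n\equiv 1,3,5\pmod 6$, the paper observes that if $n\geq 21$ is odd then $a:=n-9$ is even with $a\geq 12$, so an $\SH^*(a;8)$ already exists by Proposition~\ref{0,2}. It then forms the $(a+9)\times(a+9)$ array whose top-left $a\times a$ block is that array (with nine empty cells appended to every row and column) and whose bottom-right $9\times 9$ block is an explicit fixed pattern with entries from $\{8a+1,\ldots,8a+72\}$; the small cases $n=9,11,13,15,17,19$ are handled in \cite{web}. The only new work is to verify conditions (a)--(d) and the simplicity modulo $16(a+9)+1$ and $16(a+9)+2$ for the nine added rows and columns and, crucially, to recheck the first $a$ rows and columns against the \emph{new} moduli; the latter turns out to go through uniformly because the partial sums recorded in the proof of Proposition~\ref{0,2} were analysed carefully enough.

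What this buys is precisely the avoidance of the delicate part you flagged: there is no odd-$n$ case split at all, and the generic band does not have to be redesigned. Your approach would work but would essentially duplicate the effort of Proposition~\ref{0,2} three times over; the paper's reduction to the even case turns the problem into a single bounded computation plus reuse of existing verifications.
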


\begin{proof}
If $n=9,11,13,15,17,19$ an $\SH^*(n;8)$ can be found in \cite{web}.
Let now $n\geq 21$ and let  $a=n-9$, obviously $a$ is an even integer and $a\geq12$.
Let $H$ be the $(a+9)\times (a+9)$ array whose first $a$ rows are the ones of the $a\times a$ array constructed in
 Proposition \ref{0,2} with nine empty cells at the end and the last nine rows are the following:
\begin{footnotesize}
\begin{eqnarray*}
R_{a+1}& =& (\q^a,3+8a,-61-8a,-20-8a,-19-8a,68+8a,\q,44+8a,36+8a,-51-8a),\\
R_{a+2}& =&(\q^a,30+8a,\q,38+8a,-46-8a,-23-8a,-13-8a,71+8a,-63-8a,6+8a),\\
R_{a+3}& =&(\q^{a+1},43+8a,2+8a,-10-8a,-50-8a,67+8a,35+8a,-27-8a,-60-8a),\\
R_{a+4}& =&(\q^a,-48-8a,-16-8a,65+8a,\q,41+8a,-58-8a,-26-8a,9+8a,33+8a),\\
R_{a+5}& =&(\q^a,-12-8a,70+8a,29+8a,37+8a,5+8a,-22-8a,-53-8a,-54-8a,\q),\\
R_{a+6}& =&(\q^a,66+8a,34+8a,\q,1+8a,-59-8a,-49-8a,-17-8a,-18-8a,42+8a),\\
R_{a+7}& =&(\q^a,-21-8a,-52-8a,-11-8a,28+8a,\q,4+8a,-62-8a,45+8a,69+8a),\\
R_{a+8}& =&(\q^a,39+8a,7+8a,-56-8a,-55-8a,-14-8a,31+8a,\q,72+8a,-24-8a),\\
R_{a+9}& =&(\q^a,-57-8a,-25-8a,-47-8a,64+8a,32+8a,40+8a,8+8a,\q,-15-8a).
\end{eqnarray*}

\end{footnotesize}
\noindent Note that these rows have exactly $8$ filled cells.
Also $\nn{\cup_{h=1}^{9} R_{a+h}}=\{8a+1,\ldots, 8a+72\}$.
Hence $H$ satisfies conditions (a) and (b) of Definition \ref{def:H}.
Clearly, the lists of the partial sums of the first $a$ rows of $H$ are the same
written in the proof of Proposition \ref{0,2}.
So, we list only the partial sums for the last nine rows:
\begin{footnotesize}
\begin{eqnarray*}
\S(R_{a+1}) & = & ( 3+8a, -58, -78-8a, -97-16a, -29-8a, 15, 51+8a, 0),\\
\S(R_{a+2}) & = & ( 30+8a, 68+16a, 22+8a, -1, -14-8a, 57, -6-8a, 0),\\
\S(R_{a+3}) & = & ( 43+8a, 45+16a, 35+8a, -15, 52+8a, 87+16a, 60+8a, 0),\\
\S(R_{a+4}) & = & ( -48-8a, -64-16a, 1-8a, 42, -16-8a,-42-16a, -33-8a,0),\\
\S(R_{a+5}) & = & ( -12-8a, 58, 87+8a, 124+16a, 129+24a, 107+16a, 54+8a, 0),\\
\S(R_{a+6}) & = & ( 66+8a, 100+16a, 101+24a, 42+16a, -7+8a, -24, -42-8a, 0),\\
\S(R_{a+7}) & = & ( -21-8a, -73-16a, -84-24a, -56-16a, -52-8a, -114-16a,\\
&&-8a-69, 0),\\
\S(R_{a+8}) & = & ( 39+8a, 46+16a, -10+8a, -65, -79-8a, -48, 24+8a, 0),\\
\S(R_{a+9}) & = & ( -57-8a, -82-16a, -129-24a, -65-16a, -33-8a, 7, 15+8a, 0 ).
\end{eqnarray*}

\end{footnotesize}
\noindent Note that each row sums to $0$. By a long and direct verification one can see that
the elements of each $\S(R_t)$, $1\leq t\leq a+9$, are pairwise distinct both modulo
$16(a+9)+1$ and modulo $16(a+9)+2$.

Now, since the first $a$ cells of each row $R_{a+h}$, $1\leq h \leq 9$, are empty,
the first $a$ columns of $H$ are the ones of the $a\times a$ array defined by Proposition \ref{0,2}
with nine empty cells at the end. Also,
 the last nine columns are  the following:
\begin{footnotesize}
\begin{eqnarray*}
C_{a+1}& =& (\q^a,3+8a, 30+8a, \q, -48-8a, -12-8a, 66+8a, -21-8a, 39+8a, -57-8a )^T,\\
C_{a+2}& =&(\q^a,-61-8a, \q, 43+8a, -16-8a, 70+8a, 34+8a, -52-8a, 7+8a, -25-8a)^T,\\
C_{a+3}& =&(\q^a,-20-8a, 38+8a, 2+8a, 65+8a, 29+8a, \q, -11-8a, -56-8a, -47-8a)^T,\\
C_{a+4}& =&(\q^a,-19-8a, -46-8a, -10-8a, \q, 37+8a, 1+8a, 28+8a, -55-8a, 64+8a)^T,\\
C_{a+5}& =&(\q^a,68+8a, -23-8a, -50-8a, 41+8a, 5+8a, -59-8a, \q, -14-8a, 32+8a )^T,\\
C_{a+6}& =&(\q^{a+1}, -13-8a, 67+8a, -58-8a, -22-8a, -49-8a, 4+8a, 31+8a, 40+8a )^T,\\
C_{a+7}& =&(\q^a,44+8a, 71+8a, 35+8a, -26-8a, -53-8a, -17-8a, -62-8a, \q, 8+8a)^T,\\
C_{a+8}& =&(\q^a,36+8a, -63-8a, -27-8a, 9+8a, -54-8a, -18-8a, 45+8a, 72+8a, \q)^T,\\
C_{a+9}& =&(\q^a,-51-8a, 6+8a, -60-8a, 33+8a, \q, 42+8a, 69+8a, -24-8a, -15-8a )^T.
\end{eqnarray*}

\end{footnotesize}
\noindent Since also these columns have exactly $8$ filled cells, $H$ satisfies  condition (c).
Obviously, the lists of the partial sums of the first $a$ columns of $H$ are the same
written in the proof of Proposition \ref{0,2}.
So, as done for the rows, we list only the partial sums for the last nine columns:
\begin{footnotesize}
\begin{eqnarray*}
\S(C_{a+1}) & = & ( 3+8a, 33+16a, -15+8a, -27, 39+8a, 18, 57+8a, 0),\\
\S(C_{a+2}) & = & ( -61-8a, -18, -34-8a, 36, 70+8a, 18, 25+8a, 0),\\
\S(C_{a+3}) & = & ( -20-8a, 18, 20+8a, 85+16a, 114+24a, 103+16a, 47+8a, 0),\\
\S(C_{a+4}) & = & ( -19-8a, -65-16a, -75-24a, -38-16a, -37-8a, -9, -64-8a, 0 ),\\
\S(C_{a+5}) & = & ( 68+8a, 45, -5-8a, 36, 41+8a, -18, -32-8a, 0 ),\\
\S(C_{a+6}) & = & ( -13-8a, 54, -4-8a, -26-16a, -75-24a, -71-16a, -40-8a, 0),\\
\S(C_{a+7}) & = & ( 44+8a, 115+16a, 150+24a, 124+16a, 71+8a, 54, -8-8a, 0 ),\\
\S(C_{a+8}) & = & ( 36+8a, -27, -54-8a, -45, -99-8a, -117-16a, -72-8a, 0),\\
\S(C_{a+9}) & = & ( -51-8a, -45, -105-8a, -72,-30+8a, 39+16a, 15+8a, 0).\\
\end{eqnarray*}

\end{footnotesize}
\noindent Each column sums to $0$, so also condition (d) is satisfied.
Hence $H$ is an integer Heffter array.
Finally, again by a direct check, one can see that the elements of each $\S(C_t)$, $1\leq t\leq a+9$,
 are pairwise distinct both modulo $16(a+9)+1$ and modulo $16(a+9)+2$. Thus, $H$ is an $\SH^*(n;8)$.
\end{proof}

\begin{prop} \label{SH9_even}
Let  $n\equiv 0 \pmod 4$ and $n\geq 12$. Then, there  exists an $\SH^*(n; 9)$.
\end{prop}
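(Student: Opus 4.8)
The strategy follows that of Propositions~\ref{prop:6}--\ref{1,3}: we exhibit an explicit $n\times n$ partially filled array $H$, organised into a bounded number of row-families indexed by parameters running over arithmetic progressions, verify the four conditions of Definition~\ref{def:H}, and then establish global simplicity together with condition~$(\ast)$ by a finite check on the partial-sum sequences modulo $18n+1$ and modulo $18n+2$. As a first step I would dispose of the smallest admissible values of $n$ (at least $n=12$, and a few more values if the general construction below requires the parameter $a$ to exceed a small bound) by displaying explicit $\SH^*(n;9)$, as was done via \cite{web} in the preceding propositions. Note that here $\gcd(n,k-1)=\gcd(n,8)\neq 1$, so Proposition~\ref{biembk} does not apply and there is no reason to force $H$ to be cyclically diagonal; I expect instead a layout with two diagonal bands, in the spirit of Proposition~\ref{SH7_even}.

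For the generic case, write $n=4a$ and let $H$ be the array whose rows $R_t$ are given in several blocks: an opening block, one or two periodic families $R_{\bullet+2i}$ sweeping diagonal bands as $i$ runs through a progression of length about $a$, and a closing block, with each row having exactly nine nonempty cells. The entries are to be chosen so that two suitably paired rows together carry a set $\{1+18h,\dots,18+18h\}$ of eighteen consecutive absolute values; letting $h$ range over all the required values then shows that the multiset of entries equals $\{\pm 1,\dots,\pm 9n\}$ with no repeated absolute value, which gives conditions~(a) and~(b). Each row is built to sum to $0$, and one records the sequence $\S(R_t)$ of its partial sums, whose last term is $0$. Reading the columns $C_t$ off the row definitions, one checks by inspection of the band structure that each column has nine nonempty cells (condition~(c)) and, from the listed sequences $\S(C_t)$, that each column sums to $0$ (condition~(d)); hence $H$ is an $\H(n;9)$.

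It remains to prove that $H$ is an $\SH^*(n;9)$, that is, that the entries of each $\S(R_t)$ and each $\S(C_t)$ are pairwise distinct both modulo $18n+1$ and modulo $18n+2$. By the remark at the start of Section~\ref{sec4} a partial sum always differs from the next one and from the one two steps further on; and since the total is $0$, a collision at distance $d$ corresponds to a zero-sum block of $9-d$ entries, so for $d=7,8$ it would force two entries of a row or column to be opposite, or a single entry to vanish, impossible by conditions~(a) and~(b) of Definition~\ref{def:H}. Hence only collisions at distances $3,4,5,6$ have to be excluded. Each such potential collision unwinds into an explicit affine expression in $a$ and the relevant band index, which must be shown $\not\equiv 0\pmod{72a+1}$ and $\not\equiv 0\pmod{72a+2}$. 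This is the heart of the matter and where I expect the main obstacle to lie: as in the proof of Proposition~\ref{0,2}, the condition modulo $72a+2$ will likely force a case split according to the residue of $n$ modulo a small integer, and the genuinely delicate design problem is to choose the entry values so that all of these finitely many linear forms vanish nowhere, for every $a$ in each residue class. Once a suitable array has been pinned down, the remaining verification, though lengthy, is entirely mechanical.
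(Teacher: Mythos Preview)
Your outline correctly identifies the shape of the argument, and the paper does exactly what you describe at the top level: set $n=4a$, give the rows of $H$ in a bounded number of families with parameters running over progressions of length about $a$, read off the columns, list all the $\S(R_t)$ and $\S(C_t)$, and check pairwise distinctness modulo $72a+1$ and $72a+2$. Your structural guess of two diagonal bands \`a la Proposition~\ref{SH7_even} is also what the paper uses.

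However, what you have written is a plan, not a proof. The entire content of the paper's argument is the explicit array, and you have not supplied one; you say yourself that ``the genuinely delicate design problem is to choose the entry values'', and that design problem is precisely the proposition. Without the actual row formulas there is nothing to verify, and a referee could not accept this as a proof. Two of your concrete predictions about the unseen construction are also off. First, the absolute values are \emph{not} distributed as $\nn{R_{2h}\cup R_{2h+1}}=\{1+18h,\dots,18+18h\}$; the paper's partition of $\{1,\dots,36a\}$ among the rows is considerably more intricate (four separate unions, each a disjoint union of many short intervals), and an attempt to force the clean $18$-block pattern you propose would make the simultaneous column simplicity very hard to arrange. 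Second, the paper's single construction works uniformly for every $a\geq 3$: no small cases are farmed out to \cite{web}, and no residue split (analogous to the $n\bmod 6$ split in Proposition~\ref{0,2}) is needed modulo $72a+2$. So the obstacle you anticipated does not in fact arise here, but only because the entries were chosen with enough care; absent that choice, your proposal does not yet establish the result.
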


\begin{proof}
Let $n=4a$ and let $H$ be the $n\times n$ array whose rows $R_t$ are defined as follows:

\begin{footnotesize}
\begin{eqnarray*}
  R_1 & =  & (-2+ 12a, 2-4a, -2+16a, -4+36a, 2-36a , \q^{2a-5},  -1-12a, 3-12a , \\
  &&\q^{2a-4},   5-36a, -3+36a  ),\\
  R_2 & =&  ( 1+20a, 1-16a, 3-4a, -5+20a, 36a, -2-20a,  \q^{2a-5} , -4+12a, 5-12a,\\
  &&\q^{2a-4}, 1-36a ), \\
  R_{3+2i} & =& (\q^{2i},  -3-20a-8i, 5+20a+8i, -2-12a-i, 4-4a+2i, -3+16a-i,\\
  &&4+20a+8i, -6-20a-8i  , \q^{2a-5},      -6+12a-4i, 7-12a+4i , \q^{2a-4-2i} ),\\
   R_{4+2j} & = & (\q^{2j+1},  -7-20a-8j, 9+20a+8j, -16a-j, 5-4a+2j, -6+20a-j,\\
   &&8+20a+8j, -10-20a-8j , \q^{2a-5}, -8+ 12a-4j, 9-12a+4j, \q^{2a-5-2j}),\\
   R_{2a} & = & (2-8a, \q^{2a-4},  9-28a, -7+28a, 2-17a, -3+20a, -1+17a, -8+28a, \\
   && 6-28a , \q^{2a-5} , -12a), \\
   R_{2a+1+2j} &= & (\q^{2j}, -1+4a+4j, -4a-4j , \q^{2a-4},  5-28a-8j, -3+28a+8j, 2-15a+j, \\
   && -2+2a-2j, 1+13a+j, -4+28a+8j, 2-28a-8j , \q^{2a-5-2j} ),\\
  R_{2a+2+2j} &= & ( \q^{1+2j},  1+4a+4j, -2-4a-4j ,\q^{2a-4},  1 -28a-8j,1+ 28a+8j,\\
	&& 4-19a+j, -3+2a-2j, 17a+j, 28a+8j, -2-28a-8j,  \q^{2a-6-2j} ),\\
 R_{4a-3} & =&  (18-36a,\q^{2a-5}, -9+ 8a, 8-8a , \q^{2a-4},  21-36a, -19+36a, -14a, 2, \\
 && -1+14a, -20+36a ),\\
 R_{ 4a-2} &=& ( -16+36a, 14-36a, \q^{2a-5}, -7+8a, 6-8a, \q^{2a-4}, 17 -36a, -15+36a, \\
 && 2-18a, 1-2a, -2+20a),\\
 R_{4a-1} &= & ( -8a, -12+36a, 10-36a , \q^{2a-5}, -5+8a, 20a ,\q^{2a-4},  13-36a, -11+36a, \\
 &&1-12a, 4-8a ),\\
R_{ 4a} & =&  ( 1, -4+20a, -8+ 36a, 6-36a ,\q^{2a-5},  1-20a, 3-8a ,\q^{2a-4},  9-36a,\\
&&-7+36a, -1+8a ),
\end{eqnarray*}

\end{footnotesize}
\noindent where $i=0,\ldots,a-2$ and $j=0,\ldots,a-3$; hence,  each row has $9$ filled cells.
Since

\begin{footnotesize}
\begin{gather*}
\begin{array}{rcl}
\nn{R_1\cup R_2 \cup \cup_{t=4a-3}^{4a} R_t} & =&
\{1,2,2a-1,4a-3, 4a-2 \}\cup\{8a-9,\ldots,  8a-3 \}\cup\\
&& \{8a-1,8a \}\cup \{12a-5,\ldots,12a-1 \}\cup \{12a+1,14a-1,\\
&& 14a,16a-2,16a-1, 18a-2,20a-5, 20a-4\}\cup\\
&&\{20a-2,\ldots,20a+2 \}\cup \{36a-21,\ldots,36a\},
\end{array}\\
\begin{array}{rcl}
\cup_{j=0}^{a-3}\nn{R_{3+2j}\cup R_{4+2j}} & =& \{2a+1,\ldots,4a-4\}\cup\{8a+3,\ldots,12a-6\}\cup
\{ 12a+2,\ldots,\\
&&13a-1\}\cup \{15a,\ldots,16a-3 \} \cup \{16a,\ldots,17a-3 \}\cup \{ 19a-3,\\
&&\ldots,20a-6\} \cup \{20a+3,\ldots,28a-14\},
\end{array}\\
\begin{array}{rcl}
\nn{R_{2a-1}\cup R_{2a} } & =& \{2a,8a-2,8a+1, 8a+2,12a,13a,15a-1,17a-2,17a-1,20a-3\}\cup\\ && \{28a-13,\ldots,28a-6\},
\end{array}\\
\begin{array}{rcl}
\cup_{j=0}^{a-3}\nn{R_{2a+1+2j}\cup R_{2a+2+2j}} & =& \{3,\ldots,2a-2
\}\cup\{4a-1,\ldots,8a-10\}\cup \{13a+1,\ldots,\\
&&14a-2 \}\cup
\{14a+1,\ldots,15a-2 \}\cup\{17a,\ldots,18a-3\}\cup\\
&&\{18a-1,\ldots, 19a-4\}\cup \{28a-5,\ldots,36a-22\},
\end{array}
\end{gather*}

\end{footnotesize}
\noindent  $H$ satisfies conditions (a) and (b) of Definition \ref{def:H}. Now, we list the partial
sums for each row. We have
\begin{footnotesize}
\begin{eqnarray*}
 \S(R_1) & = & ( -2+12a, 8a, -2+24a, -6+60a, -4+24a, -5+12a, -2, 3-36a, 0 ),\\
 \S(R_2) & = & ( 1+20a, 2+4a, 5, 20a, 56a, -2+36a, -6+48a, -1+36a, 0 ),\\
\S(R_{3+2i}) & =&   ( -3-20a -8i, 2, -12a -i, 4-16a+ i, 1, 5+20a+ 8i, -1,\\
&&-7+12a -4i, 0 ),\\
\S(R_{4+2j}) & =&   (-7-20a -8j, 2, 2-16a -j,   7-20a+ j, 1, 9+20a +8j, -1,  \\
&& -9+12a -4j, 0 ),\\
\S(R_{2a}) & =&   (2 -8a,  11 -36a, 4 -8a,  6 -25a, 3 -5a, 2+ 12a,  -6+ 40a, 12a, 0 ),\\
  \S(R_{2a+1+2j})& =&  ( -1+4a+4j, -1, 4-28a-8j , 1,   3-15a+j,1-13a -j, 2,\\
&& -2+28a+8j, 0 ), \\
\S(R_{2a+2+2j}) & =&   (1+4a+ 4j, -1,  -28a-8j, 1, 5-19a+ j,  2-17a -j, 2, 2+28a+ 8j, 0 ), \\
  \S(R_{4a-3}) & =&   ( 18-36a, 9-28a, 17 -36a,  38-72a,19 -36a, 19-50a,  21-50a,\\
&&20-36a, 0 ), \\
\S(R_{4a-2}) &= &  ( -16+36a, -2, -9+8a, -3,14 -36a, -1,1 -18a, 2-20a, 0 ),\\
\S(R_{4a-1})& = & ( -8a, -12+28a, -2-8a, -7, -7+20a, 6 -16a, -5+20a, -4+8a, 0 ), \\
\S(R_{4a}) & =&   ( 1,-3+ 20a, -11+56a, -5+20a, -4,-1 -8a, 8-44a,1 -8a, 0 ) .
\end{eqnarray*}

\end{footnotesize}
\noindent By a long direct calculation, the reader can check that the elements of each $\S(R_t)$ are pairwise distinct
both modulo $72a+1$ and modulo $72a+2$
and in particular each row sums to $0$. From the definition of $H$ we obtain the  following expression of the columns:
\begin{footnotesize}
\begin{eqnarray*}
 C_1 & =& ( -2+12a, 1+20a, -3-20a,\q^{2a-4} ,2-8a, -1+4a, \q^{2a-5} , 18-36a,\\
 &&-16+36a, -8a, 1 )^T,\\
C_2 & =& ( 2-4a, 1-16a, 5+20a, -7-20a ,\q^{2a-4}, -4a, 1+4a,  \q^{2a-5},14 -36a,\\
&& -12+ 36a, -4+20a )^T,\\
C_3 & =& ( -2+16a, 3-4a, -2-12a, 9+20a, -11-20a ,\q^{2a-4} ,-2-4a,  3+4a,\\
&&\q^{2a-5}, 10-36a, -8+36a )^T,\\
C_4 & =& ( -4+36a, -5+20a, 4-4a, -16a, 13+20a, -15-20a ,\q^{2a-4},-4-4a,\\
&& 5+4a,\q^{2a-5},6-36a)^T,\\
C_5 & =& ( 2-36a, 36a, -3+16a, 5-4a, -3-12a, 17+20a, -19-20a ,\q^{2a-4},\\
&&-6-4a,7+4a,\q^{2a-5})^T,\\
C_{6+2h} & =& (\q^{1+2h},-2-20a-8h, 4+20a+8h, -6+20a-h, 6-4a+2h, -1-16a-h,\\
&& 21+20a+8h, -23-20a-8h  ,     \q^{2a-4}, -8-4a-4h, 9+4a+4h  , \q^{2a-6-2h})^T,\\
\end{eqnarray*}
  \end{footnotesize}
  \begin{footnotesize}
  \begin{eqnarray*}
C_{7+2h} & =& (\q^{2+2h}, -6-20a-8h, 8+20a+8h, -4+16a-h, 7-4a+2h, -4-12a-h, \\
&&25+20a+8h, -27-20a-8h , \q^{2a-4}, -10-4a-4h, 11+4a+4h , \\
&&  \q^{2a-7-2h}  )^T,\\
C_{2a} & = & ( \q^{2a-5},  22-28a, -20+28a, -3+19a, -2a, 2-17a, -3+28a, 1-28a , \\
&& \q^{2a-4}, 20a, 1-20a )^T,\\
C_{2a+1} & =& (-1-12a,\q^{2a-5}, 18-28a, -16+28a, -1+15a, -3+20a, 2-15a,\\
&& 1+28a, -3-28a , \q^{2a-4}, 3-8a)^T,\\
C_{2a+2+2i} &= &(  \q^{2i}, 3-12a+4i, -4+12a-4i ,\q^{2a-5}, 14-28a-8i, -12+28a+8i, \\
&&-1+17a+i, -2+2a-2i, 4-19a+i, 5+28a+8i, -7-28a-8i,\\
&& \q^{2a-4-2i} )^T,\\
C_{2a+3+2j} & =& (\q^{1+2j}, 5-12a+4j, -6+12a-4j, \q^{ 2a-5},      10-28a-8j, -8+28a+8j,\\
&& 1+13a+j, -3+2a-2j, 3-15a+j, 9+28a+8j, -11-28a-8j ,\\
&&\q^{ 2a-5-2j} )^T,\\
C_{4a-1} &=& (5-36a ,\q^{2a-4} , -3-8a, 2+8a, \q^{2a-5},  26-36a, -24+36a, -1+14a,\\
&& 1-2a, 1-12a, -7+36a)^T,\\
C_{4a} &= &( -3+36a, 1-36a, \q^{2a-4}, -1-8a, -12a, \q^{2a-5}, 22-36a, -20+36a,\\
&&-2+20a, 4-8a, -1+8a )^T,
 \end{eqnarray*}

\end{footnotesize}
\noindent where $h=0,\ldots,a-4$, $i=0,\ldots,a-2$ and $j=0,\ldots,a-3$. Note that each column has $9$ filled cells and so condition (c) holds.
We have:
\begin{footnotesize}
\begin{eqnarray*}
\S(C_1) & =& (  -2+  12a,   -1+  32a,  -4+  12a,       -2+ 4a, -3+    8a,  15    -28a,  -1+  8a,     -1,   0 ),\\
\S(C_2) &=& (  2 -4a, 3-20a, 8, 1-20a,1 -24a, 2-20a,16-56a,4 -20a,  0 ),\\
  \S(C_3) & =& (  -2+16a, 1+ 12a,-1, 8+ 20a,-3, -5 -4a,-2, 8-36a,   0 ),\\
  \S(C_4) &=&   (  -4+36a, -9+ 56a, -5+ 52a, -5+ 36a, 8+ 56a, -7+ 36a, -11+32a,\\
&&-6+ 36a, 0 ),\\
\S(C_5) &=&   (2 -36a, 2,  -1+16a, 4+ 12a, 1, 18+20a, -1,-7  -4a,   0 ),\\
\S(C_{6+2h}) & =&   ( -2-20a  -8h ,  2,  -4+20a   -h,   2+16a+h, 1, 22+20a+  8h,-1,\\
&&-9-4a   -4h, 0 ),\\
\S(C_{7+2h}) & =&   ( -6-20a  -8h, 2,   -2+16a  -h, 5+12a+h,  1,   26+20a+  8h, -1,  \\
&& -11-4a -4h,  0 ),\\
\S(C_{2a}) & =&   (22-28a, 2, -1+ 19a,  -1+17a, 1,  -2+28a,-1,  -1+20a,     0 ),\\
\S(C_{2a+1})& =&   ( -1-12a, 17-40a,1 -12a,     3a,-3+  23a, -1 + 8a,    36a, -3+  8a, 0 ),\\ 
\S(C_{2a+2+2i})&= &  (   3-12a+ 4i, -1,  13-28a  -8i, 1, 17a+ i,   -2+19a   -i, 2,   7+28a+ 8i, 0 ),\\
\S(C_{2a+3+2j}) & = &   (  5-12a+ 4j,-1,  9-28a  -8j,  1,  2+13a+ j,   -1+15a  -j, 2, \\
&&11+28a+  8j,0 ),\\
\S(C_{4a-1})  & =& (5 -36a, 2-44a, 4-36a,  30-72a,6 -36a,  5-22a, 6-24a, 7-36a, 0 ),\\
\S(C_{4a}) & =&   (-3+  36a,-2,  -3 -8a, -3-20a,  19-56a,-1 -20a,-3, 1  -8a,       0 ) .
\end{eqnarray*}

\end{footnotesize}
\noindent Since every column sums to $0$, condition (d) is satisfied and so $H$ is an $\H(4a; 9)$.
Also in this case, the elements of each $\S(C_t)$ are pairwise distinct both modulo $72a+1$ and modulo $72a+2$.
We conclude that $H$ is an $\SH^*(4a;9)$.
\end{proof}

\begin{ex}
By the proof of  Proposition \ref{SH9_even}, we obtain the following $\SH^*(12;9)$:
\begin{footnotesize}
$$\begin{array}{|r|r|r|r|r|r|r|r|r|r|r|r|}\hline
34 & -10 & 46 & 104 & -106 &  & -37 & -33 &  &  & -103 & 105 \\\hline
61 & -47 & -9 & 55 & 108 & -62 &  & 32 & -31 &  &  & -107 \\\hline
-63 & 65 & -38 & -8 & 45 & 64 & -66 &  & 30 & -29 &  &  \\\hline
 & -67 & 69 & -48 & -7 & 54 & 68 & -70 &  & 28 & -27 &  \\\hline
 &  & -71 & 73 & -39 & -6 & 44 & 72 & -74 &  & 26 & -25 \\\hline
-22 &  &  & -75 & 77 & -49 & 57 & 50 & 76 & -78 &  & -36 \\\hline
11 & -12 &  &  & -79 & 81 & -43 & 4 & 40 & 80 & -82 &  \\\hline
 & 13 & -14 &  &  & -83 & 85 & -53 & 3 & 51 & 84 & -86 \\\hline
-90 &  & 15 & -16 &  &  & -87 & 89 & -42 & 2 & 41 & 88 \\\hline
92 & -94 &  & 17 & -18 &  &  & -91 & 93 & -52 & -5 & 58 \\\hline
-24 & 96 & -98 &  & 19 & 60 &  &  & -95 & 97 & -35 & -20 \\\hline
1 & 56 & 100 & -102 &  & -59 & -21 &  &  & -99 & 101 & 23 \\\hline
\end{array}$$
\end{footnotesize}
\end{ex}

\begin{prop}\label{SH9_odd}
Let $n\equiv 3 \pmod 4$ and  $n\geq 11$. Then, there exists an $\SH^*(n; 9)$.
\end{prop}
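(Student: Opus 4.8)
The plan is to follow the device used for $k=8$ in Proposition~\ref{1,3}, reducing the case $n\equiv 3\pmod 4$ to the already settled case $n\equiv 0\pmod 4$ (Proposition~\ref{SH9_even}). First one disposes of the three small orders $n\in\{11,15,19\}$ by exhibiting explicit arrays (for which we refer to \cite{web}). For $n\geq 23$ set $a=n-11$; since $n\equiv 3\pmod 4$ this gives $a\equiv 0\pmod 4$ and $a\geq 12$, so Proposition~\ref{SH9_even} furnishes an $\SH^*(a;9)$, say $H_0$. We form an $n\times n$ array $H$ by placing $H_0$ in the upper-left $a\times a$ corner (adding $11$ empty cells at the end of each of its first $a$ rows) and appending $11$ new rows $R_{a+1},\dots,R_{a+11}$ whose $9$ filled cells all lie in the last $11$ columns, so that the last $11$ rows and the last $11$ columns of $H$ constitute an $11\times 11$ block, each of whose rows and columns has $9$ filled and $2$ empty cells. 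These new rows are chosen by explicit piecewise formulas, in the style of those appended in Proposition~\ref{1,3}, so that: their entries in absolute value are exactly $\{9a+1,\dots,9a+99\}=\{9a+1,\dots,9n\}$, which together with $H_0$ gives conditions (a), (b) of Definition~\ref{def:H}; each of the last $11$ columns receives $9$ filled cells, giving (c); each new row sums to $0$; and the partial sums of each new row are pairwise distinct modulo $18n+1$ and modulo $18n+2$. The columns of $H$ are then read off directly from these formulas.

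Once $H$ is written down, the verification follows the usual scheme of this section: one lists $\S(R_t)$ and $\S(C_t)$ for every row and column, reads off condition (d) from the fact that each sequence ends in $0$, and checks that the entries of each $\S(R_t)$ and each $\S(C_t)$ are pairwise distinct both modulo $18n+1$ and modulo $18n+2$. For the $11$ appended rows and the $11$ appended columns this is a finite, completely explicit computation. \textbf{The main obstacle} is the inherited part: the first $a$ rows and columns of $H$ are those of $H_0$, and Proposition~\ref{SH9_even} only guarantees their simplicity modulo $18a+1$ and $18a+2$, whereas here the required moduli are $18n+1=18a+199$ and $18n+2=18a+200$. Hence the partial sums inherited from $H_0$ — whose closed forms are recorded in the proof of Proposition~\ref{SH9_even} — must be re-examined against these enlarged moduli.

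To do this I would argue as follows. In any inherited row or column the difference $s_i-s_j$ of two partial sums is an affine function $\alpha a+\beta$ of $a$ with $\alpha,\beta$ explicit bounded integers. Since $\gcd(18,199)=1$, the integer $18$ is invertible modulo $18a+199$, and a short manipulation of $18a\equiv-199$ shows that $s_i\equiv s_j\pmod{18a+199}$ if and only if $18\beta-199\alpha\equiv 0\pmod{18a+199}$; as $18\beta-199\alpha$ is a fixed integer, this fails once $a$ is large, the finitely many remaining values $a\geq 12$ being checked by hand. For the modulus $18a+200$ one works instead modulo its divisor $9a+100$: there $\gcd(9,100)=1$, $9a\equiv-100$, and $s_i\equiv s_j\pmod{9a+100}$ iff $9\beta-100\alpha\equiv 0\pmod{9a+100}$, which again fails for all $a\geq 12$ after a finite check, and this already forces $s_i\not\equiv s_j\pmod{18a+200}$. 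Running this over all pairs of partial sums in all rows and columns of $H$ is tedious but routine, and establishes that $H$ is an $\SH^*(n;9)$. Should a reduction of exactly this shape prove unworkable, the fallback is a self-contained direct construction in the spirit of Proposition~\ref{SH9_even}, with explicit formulas for all $n$ rows of $H$, the bookkeeping being entirely parallel to the even case but tracking $n\equiv 3\pmod 4$ instead.
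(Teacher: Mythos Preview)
Your reduction approach carries a fatal parity obstruction that is absent in Proposition~\ref{1,3}. In the appended $11\times 11$ block, each row has $9$ entries, each necessarily of the form $\epsilon_j(9a+c_j)$ with $\epsilon_j\in\{\pm1\}$ and $c_j\in\{1,\dots,99\}$; its sum is
\[
9a\sum_{j=1}^{9}\epsilon_j \;+\; \sum_{j=1}^{9}\epsilon_j c_j.
\]
Because $k=9$ is odd, $\sum_j\epsilon_j$ is a nonzero odd integer, so this is a nonconstant affine function of $a$ and can vanish for at most one value of $a$. Thus no fixed choice of signs and constants---indeed no $11\times 11$ block whatsoever with absolute values $\{9a+1,\dots,9a+99\}$ and nine entries per row---can have zero row sums for all $a$ (the same argument kills the columns). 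The device of Proposition~\ref{1,3} works for $k=8$ precisely because $8$ is even, allowing four positive and four negative entries per row so that $\sum_j\epsilon_j=0$. As a secondary point, your re-verification argument for the inherited rows and columns of $H_0$ treats each partial-sum difference as $\alpha a+\beta$ with \emph{bounded} $\beta$, but the formulas in Proposition~\ref{SH9_even} also depend on a row index $i$ ranging up to roughly $a/4$, so the residual term is not bounded independently of $a$ and the ``fixed integer'' trick does not apply as stated.

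The paper accordingly takes your fallback route from the outset: it gives a self-contained direct construction, splitting into the two subcases $n=8a+3$ and $n=8a+7$, with explicit row formulas and a full verification of the partial sums modulo $18n+1$ and $18n+2$ in each subcase (the case $n=11$ being handled separately via \cite{web}).
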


\begin{proof}
An $\SH^*(11;9)$ can be found in \cite{web}. So, we assume $n\geq 15$. We split the proof into two cases.\\
\noindent \underline{Case 1.} Let $n=8a+3$ and  $H$ be the $n\times n$ array whose rows $R_t$ are defined as follows:
\begin{footnotesize}
\begin{eqnarray*}
R_1 & = & (2+8a,23+ 64a, -9-24a, -24-64a,13+ 40a,\q^{8a-6}, -14-40a, -17-48a,\\&& 8+ 16a,18+ 48a
),\\
R_2 & = & ( 16+40a, -1-6a,27+ 72a, -7-20a, -22-56a,12+ 31a,\q^{8a-6}, -12-25a,\\&& -21-56a,
8+20a),\\
R_3 & = & (4+8a,20+ 48a, 4a,25+ 64a,-5 -8a,-26 -64a,13+ 26a, \q^{8a-6}, -12-30a,\\&&-19 -48a ),\\
R_4 & = & (-17-40a, 6+12a,18+ 40a,-1+ 2a,23+ 56a, -7-12a, -24-56a,\\&&14+ 35a,\q^{8a-6}, -12-37a
),\\
R_{5+8i} & =& (\q^{8i}, -12-24a-i, -21-48a-8i, 8+24a-4i, 22+48a+8i, -1-8a+2i,\\&& 27+64a+8i,
-7-24a+4i, -28-64a-8i, 12+32a-i,\q^{8a-6-8i} ),\\
  R_{6+8i} & = & (\q^{1+8i}, -13-33a-i, -19-40a-8i, 6+20a-4i, 20+40a+8i, -6a+2i,\\&& 25+56a+8i,
-5-20a+4i, -26-56a-8i,12+ 39a-i, \q^{8a-7-8i} ),\\
R_{7+8i} & = & (\q^{2+8i},-12-38a+i, -23-48a-8i, 6+8a+4i,24+ 48a+8i,-1+ 4a-2i, \\&& 29+64a+8i, -7-8a-4i,
-30-64a-8i, 14+34a+i,\q^{8a-8-8i} ),\\
R_{8+8j} & = & (\q^{3+8j}, -11-29a+j, -21-40a-8j,8+ 12a+4j,22+ 40a+8j,-2+ 2a-2j,\\&& 27+56a+8j,
-9-12a-4j, -28-56a-8j, 14+27a+j,\q^{8a-9-8j}),\\
R_{9+8j} & = & (\q^{4+8j}, -13-32a-j, -25-48a-8j, 6+24a-4j,26+ 48a+8j, -8a+2j,\\&& 31+ 64a+8j,
-5-24a+4j, -32-64a-8j, 12+40a-j ,\q^{8a-10-8j}),\\
R_{10+8j} & = & (\q^{5+8j}, -13-25a-j, -23-40a-8j, 4+20a-4j, 24+40a+8j, 1-6a+2j,\\&& 29+ 56a+8j,
-3-20a+4j, -30-56a-8j, 11+31a-j,\q^{8a-11-8j} ),\\
R_{11+8j} & = & (\q^{6+8j}, -11-30a+j, -27-48a-8j,8+ 8a+4j, 28+48a+8j, -2+4a-2j,\\&& 33+ 64a+8j, -9-8a-4j,
-34-64a-8j,14+ 26a+j,\q^{8a-12-8j} ),\\
R_{12+8h} & = & (\q^{7+8h}, -11-37a+h, -25-40a-8h, 10+12a+4h, 26+40a+8h, \\
&& -3+2a-2h, 31+56a+8h, -11-12a-4h, -32-56a-8h, 15+35a+h,\\
&& \q^{8a-13-8h}),\\
R_{8a-4} & = & (\q^{8a-9}, -13-36a,-9 -48a,2+ 16a,10+ 48a,-1 -4a,15+ 64a,-3 -16a,\\&& -16 -64a,15+ 40a,\q^3
),\\
\end{eqnarray*}
  \end{footnotesize}
  \begin{footnotesize}
  \begin{eqnarray*}
R_{8a} &= & ( 11+24a,\q^{8a-6}, -10-24a, -13-48a,4+ 16a,14+ 48a, 1,19+ 64a, -6-16a,\\&& -20-64a),\\
R_{8a+1} & =& ( -24-72a, 13+39a,\q^{8a-6}, -12-33a, -17-56a,10+ 20a, 18+56a, -2-6a, \\&& 23+ 72a,
-9-20a),\\
R_{8a+2} & = & (-5-16a, -22-64a,13+ 34a,\q^{8a-6}, -12-26a, -15-48a, 7+16a,16+ 48a,\\&& -3-8a,
21+64a ),\\
R_{8a+3} & =& (25+ 72a, -5-12a, -26-72a, 13+27a, \q^{8a-6},-12-29a, -19-56a, 4+12a,\\&& 20+ 56a, 2a ),
 \end{eqnarray*}

\end{footnotesize}
\noindent where $i=0,\ldots,a-1$, $j=0,\ldots,a-2$ and $h=0,\ldots, a-3$. Note that every row contains exactly $9$ elements. Since

  \begin{footnotesize}
  \begin{gather*}
\begin{array}{rcl}
\nn{\cup_{t=3}^{5} R_{8a-2t} \cup \cup_{i=1}^3 R_{8a-i}}& =&
\{2,2a+1,4a+2,4a+3,4a+4,6a+3, 12a+2, 12a+3,\\
&&16a,16a+1\} \cup \{16a+9,\ldots,16a+14\} \cup\{20a+11,\\
&&20a+12,25a+11,26a+11,28a+12,28a+13,30a+13,\\
&&31a+13,34a+11,34a+12,35a+13,37a+13,38a+13,\\
&&38a+14\}\cup
    \{48a+3,  \ldots,48a+8\}\cup  \{48a+11,48a+12\}\\
		&& \cup     \{56a+13,\ldots,56a+16\} \cup \{64a+9,\ldots, 64a+14\} \cup\\
&&    \{64a+17,64a+18\} \cup  \{72a+19,\ldots,72a+22\},
 \end{array}\\
 \begin{array}{rcl}
\cup_{j=0}^{a-2}\nn{\cup_{i=2}^5 R_{2i+1+8j}} & =& \{2a+2,\ldots,4a-1\} \cup \{6a+4,\ldots,8a+1\}
\cup \{8a+6,\ldots,\\
&&12a+1\} \cup \{20a+13, \ldots,24a+8 \} \cup \{24a+12, \ldots, 25a+10\} \cup\\
&& \{26a+14,\ldots, 27a+12\}\cup\{29a+13,\ldots,30a+11\} \cup \\
&&\{31a+14 ,\ldots, 33a+11\}\cup \{ 34a+14,\ldots,35a+12\}\cup \\
&&\{ 37a+14,\ldots,38a+12\}\cup \{ 39a+14,\ldots,40a+12\}\cup \\
&&\{ 48a+21,\ldots,56a+12\}\cup \{64a+27 ,\ldots,72a+18\},
\end{array}\\
\begin{array}{rcl}
\cup_{j=0}^{a-3}\nn{\cup_{i=3}^6 R_{2i+8j}} & =& \{3,\ldots,2a-2\}\cup\{4a+5,\ldots,6a\} \cup \{12a+8, \ldots,16a-1\} \\
&& \cup \{ 16a+15,\ldots,20a+6\} \cup \{25a+13, \ldots, 26a+10\}\cup \\
&&\{ 27a+14,\ldots,28a+11\} \cup \{ 28a+14,\ldots,29a+11\}\cup \\
&&\{30a+14,\ldots, 31a+11\} \cup \{33a+13,\ldots,34a+10\}\cup \\
&&\{ 35a+15,\ldots,36a+12\} \cup  \{ 36a+14,\ldots,37a+11\} \cup \\
&&\{38a+15,\ldots,39a+12\}\cup \{ 40a+19,\ldots,48a+2\}\cup \\
&&\{ 56a+25,\ldots,64a+8\},
\end{array}\\
\begin{array}{rcl}
\nn{\cup_{j=1}^{4}R_j \cup R_{8a-4} \cup \cup_{t=8a}^{8a+3} R_t} & =&
\{1,2a-1,2a,4a,4a+1,6a+1, 6a+2 \}\cup \{8a+2,\ldots,\\
&& 8a+5 \}\cup \{12a+4, \ldots,12a+7 \} \cup
\{16a+2,\ldots,\\&&16a+8\} \cup \{20a+7,\ldots,
20a+10\} \cup
\{24a+9, 24a+10,\\
&& 24a+11,25a+12,26a+12,26a+13,27a+13, \\
&&29a+12, 30a+12,31a+12, 33a+12,34a+13, \\
&& 35a+14,36a+13, 37a+12, 39a+13\} \cup  \{40a+13,\ldots,\\
&& 40a+18\} \cup \{48a+9, 48a+10\} \cup \{48a+13,  \ldots,\\
&& 48a+20\} \cup \{56a+17,\ldots, 56a+24\} \cup \{64a+15,\\
&& 64a+16\} \cup \{64a+19,\ldots, 64a+26\} \cup  \{72a+23,\ldots,\\
&& 72a+27\},
\end{array}
\end{gather*}

\end{footnotesize}
\noindent  $H$ satisfies conditions (a) and (b) of Definition \ref{def:H}. Now, we list the partial
sums for each row. We have
\begin{footnotesize}
\begin{eqnarray*}
\S(R_1) & = & ( 2+8a,       25+ 72a,      16+  48a, -8-16a,        5+ 24a,        -9-16a,       -26-64a, \\&& -18-48a,     0 ),\\
 \S(R_2) & = & ( 16+40a,       15+ 34a,       42+106a, 35+86a,        13+30a,        25+61a,       13+ 36a,\\&&  -8-20a,
     0 ),\\
\S(R_3) & =&   ( 4+8a,        24+56a,      24+  60a, 49+124a,       44+116a,      18+  52a,        31+78a,\\&&  19+48a, 
0 ),\\
     \end{eqnarray*}
  \end{footnotesize}
  \begin{footnotesize}
  \begin{eqnarray*}
\S(R_4) & =&   (-17-40a,       -11-28a, 7+12a, 6+14a, 29+70a,     22+ 58a,         -2+2a,\\&&  12+37a,0),\\
\S(R_{5+8i}) & =&   (-12-24a-i,   -33-72a-9i,  -25-48a-13i, -3-5i, -4-8a-3i,   \\&& 23+56a+5i,    16+32a+9i, -12-32a+i,0
),\\
\S(R_{6+8i}) & =&   (-13-33a-i,   -32-73a-9i,  -26-53a-13i, -6-13a-5i,    -6-19a-3i,\\&&   19+37a+ 5i,    14+17a+9i, -12-39a+i,        0 ),\\
\S(R_{7+8i}) & =&   (-12-38a+i,  -35-86a -7i,  -29-78a -3i, -5-30a+5i,    -6-26a+ 3i,\\&&   23+38a+11i,  16+30a+
7i,-14-34a -i,0 ),\\
\S(R_{8+8j}) & =&   (-11-29a+j,   -32-69a-7j, -24-57a  -3j, -2-17a+5j,     -4-15a+3j, \\&& 23+41a+ 11j,    14+29a+7j, -14-27a-j,              0 ),\\
\S(R_{9+8j}) & =&   (-13-32a-j,  -38-80a -9j, -32-56a -13j, -6-8a-5j, -6-16a-3j,\\&& 25+48a+5j, 20+24a+ 9j,-12-40a+ j,
           0 ),\\
\S(R_{10+8j}) & =&   (-13-25a-j, -36-65a  -9j, -32-45a -13j, -8-5a-5j,-7-11a -3j, \\&& 22+45a+   5j, 19+25a+   9j,-11-31a+ j,0 ),\\
\S(R_{11+8j}) & =&   (-11-30a+j, -38-78a -7j,-30-70a -3j, -2-22a+5j, -4-18a+ 3j,\\&& 29+46a+ 11j, 20+38a+7j, -14-26a-j, 
  0 ),\\
\S(R_{12+8h})& =&  (  -11-37a+h,   -36-77a-7h,  -26-65a -3h, -25a+5h, -3-23a+3h,  \\&& 28+33a+11h,   17+21a+ 7h,
-15-35a-h,              0 ), \\
\S(R_{8a-4}) & =&   (-13-36a,     -22  -84a,    -20   -68a,-10 -20a,  -11 -24a,       4+ 40a,     1+   24a,\\
&&-15 -40a,     0 ), \\
\S(R_{8a}) & =&   (11+24a,              1,       -12-48a,-8 -32a,    6+16a,         7+16a,       26+ 80a,20+ 64a,               0 ), \\
\S(R_{8a+1}) &= &  ( -24-72a,     -11  -33a,    -23-66a,-40 -122a,     -30 -102a,   -12-46a,  \\&&   -14  -52a, 9+20a,       0 ),\\
\S(R_{8a+2})& = & (-5-16a,       -27-80a,       -14-46a,-26 -72a,-41 -120a,      -34-104a, \\&&    -18  -56a,-21 -64a,
0 ), \\
\S(R_{8a+3})& = & (  25+72a,        20+60a,       -6 -12a,7+ 15a,-5 -14a, -24-70a,-20-58a, \\
&& -2a,  0 ).\\
\end{eqnarray*}

\end{footnotesize}
\noindent By a long direct calculation, the reader can check that the elements of each $\S(R_t)$ are pairwise distinct
both modulo $18(8a+3)+1$ and modulo $18(8a+3)+2$
and in particular each row sums to $0$. From the definition of $H$ we obtain the following expression of the columns:
\begin{footnotesize}
\begin{eqnarray*}
C_1 & = & (2+8a,16+40a,4+8a,-17-40a,-12-24a,\q^{8a-6}, 11+24a,-24-72a, \\&&-5-16a,25+72a)^T,\\
C_2 & = & ( 23+64a,-1-6a,20+48a,6+12a,-21-48a,-13-33a,\q^{8a-6},13+39a,\\&& -22-64a,-5-12a)^T,\\
C_3 & = & (-9-24a,27+72a,4a,18+40a,8+24a,-19-40a,-12-38a, \q^{8a-6}, \\
&&13+34a, -26-72a )^T,\\
C_4 & = & (-24-64a,-7-20a,25+64a,-1+2a,22+48a,6+20a,-23-48a,\\
&&-11-29a,\q^{8a-6}, 13+27a)^T,\\
C_{5+8i} & =& (\q^{8i}, 13+40a-i, -22-56a-8i, -5-8a-4i,23+56a+8i,-1-8a+2i,\\&& 20+40a+8i,
6+8a+4i, -21-40a-8i, -13-32a-i,\q^{8a-6-8i} )^T,\\
C_{6+8i} & = & (\q^{1+8i},12+31a-i,-26-64a-8i,-7-12a-4i,27+64a+8i,-6a+2i ,\\&& 24+48a+8i,8+12a+4i,-25-48a-8i,-13-25a-i,
\q^{8a-7-8i} )^T,\\
     \end{eqnarray*}
  \end{footnotesize}
  \begin{footnotesize}
  \begin{eqnarray*}
C_{7+8i} & = & (\q^{2+8i},13+26a+i,-24-56a-8i,-7-24a+4i,25+56a+8i,-1+4a-2i,\\&&
22+40a+8i,6+24a-4i,-23-40a-8i,-11-30a+i,\q^{8a-8-8i} )^T,\\
C_{8+8j} & = & (\q^{3+8j},14+35a+j,-28-64a-8j,-5-20a+4j,29+64a+8j,-2+2a-2j,\\&&
26+48a+8j,4+20a-4j,-27-48a-8j,-11-37a+j,\q^{8a-9-8j})^T,\\
C_{9+8j} & = & (\q^{4+8j},12+32a-j,-26-56a-8j,-7-8a-4j,27+56a+8j,-8a+2j,\\&&24+40a+8j,8+8a+4j,-25-40a-8j,-13-24a-j,\q^{8a-10-8j})^T,\\
C_{10+8j} & = & (\q^{5+8j},12+39a-j,-30-64a-8j,-9-12a-4j,31+64a+8j, 1-6a+2j, \\&&
28+48a+8j,10+12a+4j,-29-48a-8j,-14-33a-j,\q^{8a-11-8j} )^T,\\
C_{11+8j} & = & (\q^{6+8j}, 14+34a+j, -28-56a-8j, -5-24a+4j, 29+56a+8j, -2+4a-2j,\\&& 26+40a+8j, 4+24a-4j,
  -27-40a-8j, -11-38a+j,\q^{8a-12-8j} )^T,\\
C_{12+8h} & = & (\q^{7+8h}, 14+27a+h, -32-64a-8h, -3-20a+4h, 33+64a+8h, \\
&& -3+2a-2h, 30+48a+8h, 2+20a-4h, -31-48a-8h, -10-29a+h,\\
&&\q^{8a-13-8h})^T,\\
C_{8a-4} & = & (\q^{8a-9},12+28a, -16-72a, -11-16a, 17+72a, -1-4a, 14+56a, 10+16a,\\&&
 -15-56a,  -10-24a, \q^3 )^T,\\
C_{8a} &= & ( -14-40a,\q^{8a-6},15+40a, -20-72a, -9-16a, 21+72a, 1, 18+56a,\\&& 7+16a, -19-56a )^T,\\
C_{8a+1} & =& ( -17-48a,-12-25a,\q^{8a-6},13+31a, -18-64a, -3-12a, 19+64a,\\&& -2-6a, 16+48a, 4+12a )^T,\\
C_{8a+2} & = & (8+16a, -21-56a, -12-30a,\q^{8a-6},13+38a, -22-72a, -6-16a, 23+72a,\\&& -3-8a, 20+56a  )^T,\\
C_{8a+3} & =&  (18+48a, 8+20a, -19-48a, -12-37a,\q^{8a-6},13+35a, -20-64a,\\&& -9-20a, 21+64a, 2a  )^T,
 \end{eqnarray*}

\end{footnotesize}
\noindent where $i=0,\ldots,a-1$, $j=0,\ldots,a-2$ and $h=0,\ldots, a-3$. Each column contains $9$ elements, hence condition (c) holds.
The partial sums of the columns are:
\begin{footnotesize}
\begin{eqnarray*}
\S(C_1) & = & ( 2+8a,       18+ 48a,        22+56a, 5+16a,-7-8a,        4+ 16a,      -20 -56a,\\&& -25 -72a,     0 ),\\
 \S(C_2) & = & (  23+64a,   22+     58a,     42+  106a, 48+118a, 27+70a,      14+  37a,     27+   76a, \\&& 5+12a,
  0 ),\\
  \S(C_3) & =&   (-9-24a,       18+ 48a,       18+ 52a,36+ 92a, 44+116a,        25+76a,      13+  38a, \\&& 26+72a,
0 ),\\
\S(C_4) & =&   (-24-64a,       -31-84a,    -6    -20a,-7 -18a, 15+30a,      21+  50a,       -2+   2a,\\&&  -13-27a, 0),\\
\S(C_{5+8i}) & =&   (13+40a-i, -9-16a   -9i,-14-24a  -13i,9+32a -5i, 8+24a-3i,\\&&   28+64a+  5i,  34+72a+  9i,13+32a+ i,   0 ),\\
\S(C_{6+8i}) & =&   ( 12+31a -i, -14-33a  -9i,-21-45a  -13i, 6+19a-5i,6+13a-3i,\\&& 30+61a+    5i, 38+73a+   9i,
13+25a+i,       0 ),\\
\S(C_{7+8i}) & =&   (13+26a+i,   -11-30a-7i, -18 -54a -3i, 7+2a+5i,6+6a+3i, \\&&  28+46a+ 11i,  34+70a+  7i,11+30a -i, 0 ),\\
\S(C_{8+8j}) & =&   ( 14+35a+j, -14-29a  -7j, -19-49a  -3j, 10+15a+5j,8+17a+3j,\\&&  34+65a+  11j, 38+85a+ 7j,
11+37a-j,             0 ),\\
\S(C_{9+8j}) & =&   (  12+32a-j, -14-24a  -9j, -21-32a -13j,6+24a -5j, 6+16a-3j, \\&&   30+56a+ 5j,  38+64a+  9j,
13+24a+j,            0 ),\\
     \end{eqnarray*}
  \end{footnotesize}
  \begin{footnotesize}
  \begin{eqnarray*}
\S(C_{10+8j}) & =&   (12+39a-j, -18-25a   -9j,-27-37a  -13j,4+27a -5j,5+21a-3j,
\\&&  33+69a+   5j, 43+81a+   9j, 14+33a+j,   0 ),\\
\S(C_{11+8j}) & =&   (  14+34a+j, -14 -22a -7j, -19-46a  -3j,10+10a+ 5j,8+14a+3j, \\&& 34+54a+  11j,  38+78a+  7j, 11+38a-j,    0 ),\\
\S(C_{12+8h})& =&  ( 14+27a+h, -18-37a  -7h,  -21-57a -3h,12+7a+ 5h, 9+9a+3h, \\&& 39+57a+  11h,   41+77a+ 7h, 10+29a-h,                0 ), \\
\S(C_{8a-4}) & =&   (  12+28a,-4  -44a,     -15  -60a,2+ 12a,1+8a,    15+    64a,       25+ 80a,\\&&  10+24a,   0 ), \\
\S(C_{8a}) & =&   (  -14-40a,              1,      -19 -72a, -28-88a,-7-16a,       -6 -16a,        12+40a, \\&& 19+56a, 
        0 ), \\
\S(C_{8a+1}) &= &  (  -17-48a,      -29 -73a,       -16-42a, -34-106a,-37-118a,-18-54a,    \\&& -20  -60a,  -4-12a,
0 ),\\
\S(C_{8a+2})& = & (8+16a,-13-40a,  -25-70a,-12 -32a,-34-104a,   -40-120a,  \\&&   -17  -48a, -20 -56a,    0 ), \\
\S(C_{8a+3})& = & ( 18+48a,      26+  68a,       7+  20a, -5-17a, 8+18a,-12-46a,      -21 -66a, \\&& -2a, 0 ). \\
\end{eqnarray*}

\end{footnotesize}
\noindent Since each column sums to $0$, $H$ is a H$(8a+3;9)$. Finally, one can check that the elements of each $\S(C_{t})$ are pairwise distinct both modulo $18(8a+3)+1$
and modulo $18(8a+3)+2$. We conclude that $H$ is an $\SH^*(8a+3;9)$.	\\
\noindent \underline{Case 2.} Let $n=8a+7$ and  $H$ be the $n\times n$ array whose rows $R_t$ are defined as follows:
\begin{footnotesize}
\begin{eqnarray*}
R_1 & = & (6+8a,55+64a,-21-24a,-56-64a,33+40a,\q^{8a-2},-34-40a,\\
&&-41-48a,16+16a,42+48a),\\
R_2 & = & (36+40a,-4-6a,63+72a,-17-20a,-50-56a,32+39a,\q^{8a-2},\\
&&-29-33a, -49-56a,18+20a ),\\
R_3 & = & (8+8a,44+48a,2+4a,57+64a,-9-8a,-58-64a,26+26a,\q^{8a-2}, \\
&& -27-30a,-43-48a ),\\
R_4 & = & (-37-40a,12+12a,38+40a,2a,51+56a,-13-12a,-52-56a,\\
&& 27+27a,\q^{8a-2},-26-29a),\\
R_{5+8i} & =& (\q^{8i}, -24-24a-i,-45-48a-8i,20+24a-4i, 46+48a+8i,-5-8a+2i,\\
&&59+64a+8i,-19-24a+4i,-60-64a-8i,28+32a-i,\q^{8a-2-8i} ),\\
 R_{6+8i} & = & (\q^{1+8i},-25-25a-i,-39-40a-8i,16+20a-4i,40+40a+8i,-3-6a+2i,\\
&&53+56a+8i, -15-20a+4i,-54-56a-8i,27+31a-i,  \q^{8a-3-8i} ),\\
R_{7+8i} & = & (\q^{2+8i},-31-38a+i,-47-48a-8i,10+8a+4i, 48+48a+8i, 1+4a-2i,\\
&&61+64a+8i,-11-8a-4i,-62-64a-8i,31+34a+i,\q^{8a-4-8i} ),\\
R_{8+8j} & = & (\q^{3+8j},-30-37a+j, -41-40a-8j,14+12a+4j,42+40a+8j,-1+2a-2j,\\
&&55+56a+8j,-15-12a-4j,-56-56a-8j,32+35a+j,\q^{8a-5-8j}),\\
R_{9+8i} & =&(\q^{4+8i},-29-32a-i,-49-48a-8i,18+24a-4i,50+48a+8i,-4-8a+2i,\\
&&63+64a+8i,-17-24a+4i,-64-64a-8i,32+40a-i,\q^{8a-6-8i}),\\
R_{10+8i} & = & (\q^{5+8i}, -30-33a-i,-43-40a-8i,14+20a-4i,44+40a+8i,-2-6a+2i,\\
&&57+56a+8i,-13-20a+4i,-58-56a-8i,31+39a-i,\q^{8a-7-8i} ),\\
R_{11+8i} & = & (\q^{6+8i},-26-30a+i,-51-48a-8i,12+8a+4i, 52+48a+8i,4a-2i,\\
&&65+64a+8i,-13-8a-4i,-66-64a-8i,27+26a+i, \q^{8a-8-8i} ),\\
     \end{eqnarray*}
  \end{footnotesize}
  \begin{footnotesize}
  \begin{eqnarray*}
R_{12+8j} & = & (\q^{7+8j},-25-29a+j,-45- 40a-8j,16+12a+4j, 46+40a+8j,-2+2a-2j,\\
&&59+56a+8j,-17-12a-4j, -60-56a-8j,28+27a+j, \q^{8a-9-8j}),\\
R_{8a} & = & (\q^{8a-5},-31-36a,-33-48a,10+16a,34+48a, -3-4a,47+64a,-11-16a,\\
&&-48-64a,35+40a,\q^3),\\
R_{8a+4} &= & (23+24a,\q^{8a-2},-22-24a,-37-48a,12+16a,38+48a,1,51+64a,\\
&&-14-16a,-52-64a ),\\
R_{8a+5} & =& (-60-72a,28+31a,\q^{8a-2},-24-25a,-45-56a,20+20a,46+56a,-5-6a,\\
&&59+72a,-19-20a),\\
R_{8a+6} & = & (-13-16a,-54-64a,30+34a,\q^{8a-2},-25-26a,-39-48a,15+16a,\\
&&40+48a, -7-8a,53+64a ),\\
R_{8a+7} & =& (61+72a,-11-12a,-62-72a,31+35a,\q^{8a-2},-31-37a,-47-56a,10+12a,\\
&&48+56a,1+2a  ),
 \end{eqnarray*}

\end{footnotesize}
\noindent where $i=0,\ldots,a-1$ and $j=0,\ldots,a-2$. Each row contains $9$ elements.
 Since
\begin{footnotesize}
\begin{gather*}
\begin{array}{rcl}
\nn{\cup_{j=1}^{4}R_j  \cup \cup_{t=8a+4}^{8a+7} R_t} & =&
\{1,2a,2a+1,4a+2,6a+4, 6a+5 \}\cup \{8a+6,\ldots,8a+9 \}\\
&& \cup \{12a+10,\ldots,12a+13 \} \cup \{16a+12,\ldots,16a+16\} \cup \\
&&\{20a+17,\ldots,20a+20\} \cup \{24a+21,24a+22,24a+23,\\
&& 25a+24,26a+25,26a+26,27a+27, 29a+26,30a+27,\\
&& 31a+28,33a+29,34a+30,35a+31,37a+31,39a+32,\\
&&40a+33,40a+34,40a+36,40a+37,40a+38\} \cup \\&&\{48a+37,\ldots, 48a+44\} \cup
   \{56a+45,\ldots,56a+52\} \cup\\&&
  \{64a+51,\ldots, 64a+58\}  \cup  \{72a+59,\ldots,72a+63\},
\end{array}\\
\begin{array}{rcl}
\cup_{j=0}^{a-1}\nn{\cup_{i=2}^5 R_{2i+1+8j}} & =& \{2a+2,\ldots,4a+1\} \cup \{6a+6,\ldots,8a+5\}
\cup \\&&
\{8a+10,\ldots,12a+9\} \cup \{20a+21, \ldots,24a+20 \} \cup \\&&
\{24a+24, \ldots, 25a+23\}\cup \{26a+27,\ldots, 27a+26\}\cup\\&&
\{29a+27,\ldots,30a+26\} \cup \{31a+29 ,\ldots, 33a+28\}\cup \\&&
\{ 34a+31,\ldots,35a+30\}\cup \{ 37a+32,\ldots,38a+31\}\cup \\&&
\{ 39a+33,\ldots,40a+32\}\cup \{ 48a+45,\ldots,56a+44\}\cup \\&&
\{64a+59 ,\ldots,72a+58\},
\end{array}\\
\begin{array}{rcl}
\cup_{j=0}^{a-2}\nn{\cup_{i=3}^6 R_{2i+8j}} & =& \{2,\ldots,2a-1\}\cup\{4a+6,\ldots,6a+3\} \cup \\&&
\{12a+14, \ldots,16a+9\} \cup \{ 16a+21,\ldots,20a+16\} \cup \\&&
\{25a+25, \ldots, 26a+23\}\cup \{ 27a+28,\ldots,29a+25\} \cup \\&&
 \{30a+29,\ldots, 31a+27\} \cup \{33a+30,\ldots,34a+28\}\cup \\&&
 \{ 35a+32,\ldots,36a+30\} \cup \{ 36a+32,\ldots,37a+30\} \cup \\&&
  \{38a+33,\ldots,39a+31\}\cup \{ 40a+39,\ldots,48a+30\}\cup \\&&
  \{ 56a+53,\ldots,64a+44\},
\end{array}\\
\begin{array}{rcl}
\nn{\cup_{t=-1}^{1} R_{8a+2t}} & =&
\{4a+3,4a+4,4a+5,16a+10,16a+11\} \cup \{16a+17 \ldots,16a+20\} \cup \\&&
\{26a+24,30a+28,34a+29,36a+31,38a+32,40a+35\}  \cup \\&&
\{48a+31,\ldots, 48a+36\} \cup
  \{64a+45,\ldots, 64a+50\},
\end{array}
\end{gather*}

\end{footnotesize}
\noindent  $H$ satisfies conditions (a) and (b) of Definition \ref{def:H}. Now, we list the partial
sums for each row. We have

\begin{footnotesize}
\begin{eqnarray*}
\S(R_1) & = & (6+8a, 61+72a,40+48a,-16-16a,17+24a,-17-16a, -58-64a,\\
&&-42-48a,0 ),\\
\S(R_2) & = &(36+40a, 32+34a,95+106a,78+86a,28+30a,60+69a, 31+36a,\\
&&-18-20a,0 ),\\
\S(R_3) & = &(8+8a,52+56a,54+60a,111+124a,102+116a,44+52a,70+78a,\\
&& 43+48a,0 ),\\
   \end{eqnarray*}
 \end{footnotesize}
 \begin{footnotesize}
 \begin{eqnarray*}
\S(R_4) & = &(-37-40a,-25 -28a,13+ 12a,13+14a,64+70a,51+58a,  -1+2a,\\
&&26+29a,0 ),\\
\S(R_{5+8i})&=&  (-24-24a-i,-69-72a-9i,-49-48a-13i,-3-5i, \\
 &&-8-8a-3i,51+ 56a+5i, 32+32a+9i,-28-32a+i,  0 ),\\
\S(R_{6+8i})&=& (-25-25a-i,-64   -65a-9i,-48  -45a-13i,-8     -5a-5i,\\
 &&-11-11a-3i,42+ 45a+5i,  27+  25a+9i,-27     -31a+i,               0 ),\\
\S(R_{7+8i})&=&(-31-38a+i,-78   -86a-7i,-68   -78a-3i,-20   -30a+5i, \\
 &&-19-26a+3i,42+ 38a+11i,31 +   30a+7i,  -31   -34a-i,               0 ),\\
\S(R_{8+8j}) & = & (    -30 -37a+j, -71  -77a-7j,-57   -65a-3j,-15   -25a+5j, \\
 &&-16-23a+3j, 39+   33a+11j,  24+  21a+7j,-32-35a-j,               0 ),\\
\S(R_{9+8i}) & = &(-29-32a-i,-78   -80a-9i,-60  -56a-13i,-10    -8a-5i, \\
 &&-14-16a-3i,49+    48a+5i, 32+   24a+9i, -32    -40a+i,               0 ),\\
\S(R_{10+8i}) & = & (-30-33a-i,-73   -73a-9i,-59  -53a-13i,-15   -13a-5i,\\
 &&-17-19a-3i,40+   37a+5i,27+    17a+9i,-31     -39a+i,               0 ),\\
\S(R_{11+8i}) & = & (-26-30a+i,-77   -78a-7i,-65   -70a-3i,-13   -22a+5i,  \\
 &&-13-18a+3i,52+   46a+11i,39+    38a+7i,-27     -26a-i,               0 ),\\
\S(R_{12+8j}) & = & (-25-29a+j,-70   -69a-7j,-54   -57a-3j,-8    -17a+5j,\\
 &&-10-15a+3j,49+   41a+11j,32+    29a+7j,-28     -27a-j,               0 ),\\
\S(R_{8a}) & = &(-31-36a,-64       -84a,   -54    -68a,-20       -20a,  -23-24a,24+        40a, \\
&&13+       24a,-35       -40a,               0 ),\\
\S(R_{8a+4}) & = &(23+24a,               1,       -36-48a,-24       -32a,14+        16a,       15+16a,66+        80a,\\
&&52+        64a,               0 ),\\
\S(R_{8a+5}) & = &(-60-72a,-32       -41a,-56       -66a,-101     -122a,-81      -102a,    \\
 &&-35-46a,-40       -52a,   19+     20a,               0 ),\\
 \S(R_{8a+6} )& = &(-13-16a,   -67    -80a,-37       -46a,-62       -72a,-101     -120a,  \\
 &&-86-104a,-46       -56a,-53       -64a,               0 ),\\
\S(R_{8a+7} )& = &(61+72a,50+        60a,-12       -12a,     19+   23a,-12       -14a, -59-70a, \\
 &&-49       -58a,-1         -2a,               0).
  \end{eqnarray*}

\end{footnotesize}
\noindent By a long direct calculation, the reader can check that the elements of each $\S(R_t)$ are pairwise distinct
both modulo $18(8a+7)+1$ and modulo $18(8a+7)+2$
and in particular each row sums to $0$. From the definition of $H$ we obtain the following expression of the columns:
\begin{footnotesize}
\begin{eqnarray*}
C_1 & = & ( 6+8a,36+ 40a,8+8a,-37 -40a, -24-24a, \q^{8a-2}, 23+24a, -60-72a,\\
&&-13-16a,  61+72a )^T,\\
C_2 & = & (55+64a,-4-6a,44+48a,12+12a,-45-48a,-25-25a,\q^{8a-2},28+31a,\\
&&-54-64a,-11-12a )^T,\\
C_3 & = & (-21-24a,63+ 72a,2+4a,38+40a,20+ 24a,-39-40a,-31-38a,\q^{8a-2},\\
&& 30+34a,-62-72a  )^T,\\
C_4 & = & (-56-64a, -17-20a,57+ 64a, 2a,46+ 48a,16+20a, -47-48a, -30-37a,\\
&& \q^{8a-2},31+35a )^T,\\
C_{5+8i} & = & (\q^{8i}, 33+40a-i,-50-56a-8i,-9-8a-4i,51+56a+8i,-5-8a+2i,\\
&& 40+40a+8i,10+8a+4i,-41-40a-8i,-29-32a-i,\q^{8a-2-8i})^T,\\
C_{6+8i} & = & (\q^{8i+1},32+39a-i,-58-64a-8i,-13-12a-4i,59+64a+8i,-3-6a+2i,\\
&&48+48a+8i,14+12a+4i,-49-48a-8i,-30-33a-i,\q^{8a-3-8i})^T,\\
 \end{eqnarray*}
 \end{footnotesize}
 \begin{footnotesize}
 \begin{eqnarray*}
C_{7+8i} & = &(\q^{8i+2},26+26a+i,-52-56a-8i,-19-24a+4i,53+56a+8i,1+4a-2i,\\
&&42+40a+8i,18+24a-4i,-43-40a-8i,-26-30a+i,\q^ {8a-4-8i} )^T,\\
C_{8+8j} & = & (\q^{8j+3},27+27a+j,-60-64a-8j,-15-20a+4j,61+64a+8j,-1+2a-2j,\\
&&50+48a+8j,14+20a-4j,-51-48a-8j, -25-29a+j,\q^{8a-5-8j} )^T,\\
C_{9+8i} & = & (\q^{8i+4},28+32a-i,-54-56a-8i, -11-8a-4i,55+56a+8i,-4-8a+2i,\\
&& 44+40a+8i,12+8a+4i,-45-40a-8i,-25-24a-i,\q^{8a-6-8i} )^T,\\
C_{10+8i} & = & ( \q^{8i+5},27+31a-i,-62-64a-8i,-15-12a-4i,63+64a+8i,-2-6a+2i,\\
&& 52+48a+8i,16+12a+4i,-53-48a-8i,-26-25a-i,\q^{8a-7-8i})^T,\\
C_{11+8i} & = & (\q^{8i+6}, 31+34a+i,-56-56a-8i,-17-24a+4i,57+56a+8i, 4a-2i,\\
&&46+40a+8i,16+24a-4i,-47-40a-8i,-30-38a+i,\q^{8a-8-8i} )^T,\\
C_{12+8j} & = & (\q^{8j+7},32+35a+j, -64-64a-8j, -13-20a+4j, 65+64a+8j,-2+2a-2j,\\
 &&54+48a+8j,12+ 20a-4j, -55-48a-8j, -29-37a+j,		\q^{8a-9-8j} )^T,\\
C_{8a} & = & (\q^{8a-5}, 26+28a, -52-72a, -19-16a,53+72a, -3-4a,42+56a,18+16a,\\
&&-43-56a,-22-24a,\q^{3} )^T,\\
C_{8a+4} & = & (-34-40a,\q^{8a-2},35+40a,-56-72a,-17-16a, 57+72a, 1, 46+56a, \\
&&15+ 16a, -47-56a )^T,\\
C_{8a+5} & = & (-41-48a,-29-33a, \q^{8a-2}, 33+39a,-50-64a,-9-12a, 51+64a,-5-6a,\\
&&40+ 48a,10+ 12a  )^T,\\
C_{8a+6} & = & ( 16+16a,-49-56a,-27-30a,\q^{8a-2},32+38a, -58-72a, -14-16a,59+72a, \\
&&-7-8a, 48+56a )^T,\\
C_{8a+7} & = & (42+48a,18+20a, -43-48a, -26-29a,\q^{8a-2},26+ 27a,-52-64a, -19-20a,\\
&& 53+64a,1+ 2a )^T,
\end{eqnarray*}

\end{footnotesize}
\noindent where $i=0,\ldots,a-1$ and $j=0,\ldots,a-2$. Clearly, each column has exactly $9$ filled cells.
We have

\begin{footnotesize}
\begin{eqnarray*}
\S(C_1) & = & (6+8a,       42+ 48a, 50+       56a,13+        16a,-11        -8a,12+        16a,
-48-56a,\\&&-61       -72a,               0 ),\\
\S(C_2) & = & (55+64a,51+        58a, 95+      106a,107+      118a, 62+       70a, 37       +45a,
65+76a,\\&&11+        12a,               0  ),\\
\S(C_3) & = & (-21-24a,42+        48a,44+        52a,82   +     92a,102+      116a,63+        76a,
32+38a,\\&&62+        72a,               0 ),\\
\S(C_4) & = & ( -56-64a,-73       -84a,-16       -20a,-16       -18a,30+        30a,46+        50a,
-1+2a,\\&&-31       -35a,               0 ),\\
 \S(C_{5+8i}) & = & (33+40a -i,-17-16a   -9i,-26-24a  -13i,25+32a   -5i,  20+24a -3i,\\
&&60+64a+5i,70+72a+9i,29+32a+      i,               0 ),\\
\S(C_{6+8i}) & = & (32+39a-i,-26-25a   -9i,-39-37a  -13i,20+27a   -5i,  17+21a -3i,\\
&&65+69a   + 5i, 79+81a+9i,30+33a+      i,               0 ),\\
\S(C_{7+8i}) & = & ( 26+26a+i,-26-30a   -7i,-45-54a   -3i,8+2a+      5i,    9+6a+  3i, \\
&&51+46a+  11i,69+70a+7i,26+30a     -i,               0  ),\\
\S(C_{8+8j}) & = & (27+27a+j,-33-37a  -7j,-48-57a   -3j, 13+7a+    5j,12+9a+     3j, \\
&& 62+57a+11j, 76+77a+ 7j,25+29a     -j,               0 ),\\
\S(C_{9+8i}) & = & (28+32a-i,   -26-24a-9i,-37-32a  -13i,18+24a   -5i,  14+16a -3i,\\
&&58+56a+    5i,70+64a+9i,25+24a+      i,               0  ),\\
   \end{eqnarray*}
 \end{footnotesize}
 \begin{footnotesize}
 \begin{eqnarray*}
\S(C_{10+8i}) & = & (27+31a-i,-35-33a   -9i,-50-45a  -13i,13+19a   -5i,  11+13a -3i, \\
&& 63+61a+    5i,79+73a+9i,26+25a+      i,               0 ),\\
\S(C_{11+8i}) & = & (31+34a+i,-25-22a   -7i,-42-46a   -3i,15+10a+    5i,   15+14a+ 3i, \\
&&61+54a+  11i,77+78a+ 7i,30+38a     -i,               0  ),\\
\S(C_{12+8j}) & = & (32+35a+ j,-32-29a   -7j,-45-49a   -3j,20+15a+    5j,   18+17a+ 3j,\\
&&72+65a+   11j,84+85a+7j,29+37a     -j,               0  ),\\
\S(C_{8a}) & = & ( 26+28a,-26       -44a,-45       -60a, 8+        12a,5+          8a,47+        64a,
65+80a, \\&&       22+24a,               0 ),\\
\S(C_{8a+4}) & = & ( -34-40a,               1,    -55   -72a,-72       -88a,-15       -16a,-14       -16a,
32+40a, \\&&     47+  56a,               0 ),\\
\S(C_{8a+5}) & = & (-41-48a,   -70    -81a,-37       -42a,-87      -106a,      -96-118a,-45 -54a,\\
&&-50-60a,-10       -12a,               0  ),\\
\S(C_{8a+6}) & = & (16+16a,-33       -40a,-60       -70a,-28       -32a,     -86 -104a, -100     -120a, \\
&&-41-48a,-48       -56a,               0  ),\\
\S(C_{8a+7}) & = & (42+48a,60+        68a,17+        20a,-9         -9a,17+        18a,-35       -46a,
-54-66a,\\&& -1        -2a,               0 ).
\end{eqnarray*}

\end{footnotesize}
\noindent Since each column sums to $0$, $H$ is an $\H(8a+7;9)$. Also in this case, the elements of each $\S(C_t)$ are pairwise distinct both modulo $18(8a+7)+1$
and modulo $18(8a+7)+2$. We conclude that $H$ is an $\SH^*(8a+7;9)$.
\end{proof}

\begin{prop}\label{prop:10}
Let $n\geq 10$ be even. Then, there exists an $\SH^*(n;10)$.
\end{prop}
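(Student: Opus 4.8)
The plan is to follow exactly the template used in the proofs of Propositions~\ref{prop:6}--\ref{SH9_odd}. For the finitely many smallest admissible orders I would exhibit explicit arrays $\SH^*(n;10)$, referring to \cite{web} as in the earlier cases. For all remaining even $n$ I would write down an explicit $n\times n$ partially filled array $H$, defined by closed-form formulas for its rows $R_t$ in terms of an index running over an arithmetic progression, whose filled cells occupy a cyclically diagonal band of width $10$ (or the block layout used for $k=6$ in Proposition~\ref{prop:6}). As already happened for $k=8$ in Proposition~\ref{0,2}, I expect the construction will have to be split into subcases according to the residue of $n$ modulo~$6$, precisely because simplicity modulo $2nk+2=20n+2$ is sensitive to that residue.

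The verification then proceeds in the usual order. First the structural conditions: each $R_t$ has exactly $10$ filled cells, and the absolute values of the entries, grouped as unions $\nn{\cup_i R_{\sigma(i)}}$ of suitably chosen pairs (or short runs) of rows, partition $\{1,\dots,10n\}$; this yields (a) and (b). Next I would tabulate $\S(R_t)$, read off that its last term is $0$ (condition (d) on rows), and --- invoking the remark opening Section~\ref{sec4}, that $s_i\not\equiv s_{i+1},s_{i+2}$ automatically --- reduce simplicity to the inequalities $s_b\not\equiv s_c \pmod{\nu}$ for the few remaining distances $c-b\in\{3,4,5\}$ and for $\nu\in\{20n+1,20n+2\}$. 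Then I would extract the columns $C_t$ from the row formulas, check each has $10$ filled cells (condition (c)), tabulate $\S(C_t)$, confirm each column sums to $0$ (condition (d) on columns), and again verify pairwise distinctness of the partial sums modulo $20n+1$ and modulo $20n+2$. Putting all this together shows that $H$ is an $\SH^*(n;10)$, and combined with the small-order examples this proves the proposition.

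The genuinely delicate part is this last check modulo $20n+2$. Since $20n+2=2(10n+1)$, each comparison $s_b-s_c$ is an integer affine expression in $n$ and the running index, and one must rule out that it equals $0$, $\pm(20n+1)$, $\pm(40n+2),\dots$ (for the modulus $20n+1$) or $0$, $\pm(20n+2)$, $\pm(40n+4),\dots$ (for the modulus $20n+2$). Because the partial sums can have size comparable to $50n$, several such multiples are in range, and the few residue classes of $n$ for which a forbidden coincidence would occur are exactly what forces the case split and/or the small-order exceptions. I expect every other step to be routine bookkeeping, entirely analogous to the $k=6,7,8,9$ arguments already carried out in this section.
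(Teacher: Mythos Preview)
Your plan is sound and tracks the paper's proof closely for the main thrust: one explicit $n\times n$ array $H$ (for $n\geq 12$), row formulas indexed by an arithmetic progression, verification of (a)--(d), tabulation of $\S(R_t)$ and $\S(C_t)$, and the observation that simplicity modulo $20n+1$ goes through uniformly while simplicity modulo $20n+2$ is the sensitive step, forcing a split by the residue of $n$ modulo $6$. All of that is exactly what the paper does.

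Where the paper diverges from your expectation is in \emph{how} the bad residue classes are handled. You anticipate, by analogy with Proposition~\ref{0,2}, writing down a separate explicit construction for each class of $n\pmod 6$. The paper does not do this. Instead it builds a \emph{single} array $H$ that is already an $\SH(n;10)$ for all even $n\geq 12$ and an $\SH^*(n;10)$ precisely when $n\equiv 0\pmod 6$. For $n\equiv 4\pmod 6$ exactly one row $R_{4a+2}$ fails simplicity modulo $20n+2$, and the repair is to \emph{interchange two specific columns} of $H$; this perturbs only twelve rows, whose new partial sums are obtained from the old ones by replacing a short subsequence $A_t$ with another $B_t$, and one checks these locally. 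For $n\equiv 2\pmod 6$ both a row and a column fail, and the repair is a column swap followed by a row swap, again checked locally. This ``one base array plus targeted swaps'' technique is new relative to the $k\leq 9$ proofs and is the main idea you would not have guessed from the earlier templates. Your proposed route (three separate constructions) is in principle workable, but the paper's approach is more economical: the bulk of the verification is done once, and only short local patches need to be re-examined for the other two residue classes.
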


\begin{proof}
An $\SH^*(10;10)$  can be found in \cite{web}. So let $n\geq 12$ be even and let
$H$ be the $n\times n$ partially filled array whose rows $R_t$ are the following:
\begin{footnotesize}
\begin{eqnarray*}
R_1 & = & ( 16-10n, 10n-9,12-10n,6-10n,8-10n,1-10n,10n-4, \q, \\
&&10n,\q^{n-12},10n-17,\q,10n-13 ), \\
R_{2+2i} & = & (\q^{2i+1}, -21-20i,\q,-25-20i,22+20i,-29-20i,26+20i,30+20i, \\
{} & {} & 33+20i,37+20i,-35-20i,\q,-38-20i,\q^{n-13-2i}),\\
R_3 & = & (\q,7,-4,11,-8,-14,-12,-19,16,\q,20, \q^{n-12},3),\\
R_{5+2i} & =  & (\q^{2i+1}, 23+20i,\q,27+20i,-24-20i,31+20i,-28-20i,-34-20i, \\
{} & {} & -32-20i,-39-20i,36+20i,\q,40+20i,\q^{n-13-2i}),\\
R_{n-10} & = &  (82-10n,\q^{n-12},99-10n,\q,95-10n,-98+10n,91-10n, -94+10n,\\
&&-90+10n,-87+10n,-83+10n,85-10n,\q),\\
R_{n-8} & = &  (65-10n,\q,62-10n,\q^{n-12},79-10n,\q,75-10n,-78+10n, 71-10n,\\
&&-74+10n,-70+10n,-67+10n,-63+10n),\\
R_{n-7} & = &  (-80+10n,\q^{n-12},-97+10n,\q,-93+10n,96-10n,-89+10n,\\
{} & {} & 92-10n,86-10n,88-10n,81-10n,-84+10n,\q),\\
R_{n-6} & = &  (-47+10n,-43+10n,45-10n,\q,42-10n,\q^{n-12},59-10n,\q,\\
{} & {} & 55-10n,-58+10n,51-10n,-54+10n,-50+10n),\\
R_{n-5} & = &  (-64+10n,\q,-60+10n,\q^{n-12},-77+10n,\q,-73+10n,76-10n,\\
{} & {} & -69+10n,72-10n,66-10n,68-10n,61-10n),\\
R_{n-4} & = &  (-34+10n,-30+10n,-27+10n,-23+10n,25-10n,\q,22-10n,\\
{} & {} & \q^{n-12},39-10n,\q,35-10n,-38+10n,31-10n),\\
R_{n-3} & = &  (48-10n,41-10n,-44+10n,\q,-40+10n,\q^{n-12},-57+10n,\q,\\
{} & {} & -53+10n,56-10n,-49+10n,52-10n,46-10n),\\
              \end{eqnarray*}
		 \end{footnotesize}
 \begin{footnotesize}
 \begin{eqnarray*}
R_{n-2} & = & (-18+10n,11-10n,-14+10n,-10+10n,-7+10n,-3+10n,\\
{} & {} & 5-10n,\q,2-10n,\q^{n-12},19-10n,\q,15-10n),\\
R_{n-1} &= & (32-10n, 26-10n, 28-10n, 21-10n,-24+10n,\q,-20+10n,  \q^{n-12},\\
&&-37+10n,\q, -33+10n,36-10n,-29+10n),\\
R_{n} & = & (\q,-5,2,-9,6,10,13,17,-15,\q,-18,\q^{n-12},-1),
\end{eqnarray*}
\end{footnotesize}

\noindent where $i=0,\ldots,\frac{n-14}{2}$. Note that every row contains exactly $10$ filled cells. Also,
it is easy to see that $\nn{R_{3}\cup R_{n}}=\{1,\ldots,20\}$,
$\nn{R_{1}\cup R_{n-2}} =\{10n-19,\ldots,10n\}$ and
$\nn{R_{2h}\cup R_{2h+3}}=\{1+20h,\ldots,20+20h\}$
for all $h=1,\ldots,\frac{n-4}{2}$.
Hence, $H$ satisfies conditions (a) and (b) of Definition \ref{def:H}.
Now, we list the partial sums of each row. We have
\begin{footnotesize}
\begin{eqnarray*}
\S(R_1) & = &(16-10n,7,19-10n,25-20n,33-30n,34-40n,30-30n,  30-20n, \\
&&13+10n,0),\\
\S(R_{2+2i}) & =  & (-21-20i,-46-40i,-24-20i,-53-40i,-27-20i,3,36+20i, \\
&& 73+40i,38+20i, 0),\\
\S(R_3) & = & (7,3,14,6,-8,-20,-39,-23,-3,0),\\
\S(R_{5+2i}) & = & (23+20i, 50+40i,26+20i,57+40i,29+20i,-5,-37-20i,-76-40i,\\
&&-40-20i, 0) ,\\
 \S(R_{n-10}) & = & (82-10n,181-20n,276-30n,178-20n,269-30n,175-20n,\\
&& 85-10n,-2,-85+10n, 0),\\
\S(R_{n-8}) & = & (65-10n,127-20n,206-30n,281-40n,203-30n,274-40n,\\
&& 200-30n,130-20n,63-10n,0 ), \\
 \S(R_{n-7}) &= & (-80+10n,-177+20n,-270+30n,-174+20n,-263+30n,\\
{} & {} &  -171+20n,-85+10n,3,84-10n,0),\\
\S(R_{n-6}) & = & (-47+10n,-90+20n,-45+10n,-3,56-10n,111-20n, 53-10n,\\
&&104-20n,50-10n,0),\\
\S(R_{n-5}) & = & (-64+10n,-124+20n,-201+30n,-274+40n,-198+30n,\\
{} & {} &  -267+40n,-195+30n,-129+20n,-61+10n,0),\\
\S(R_{n-4}) & = & (-34+10n,-64+20n,-91+30n,-114+40n,-89+30n, -67+20n,\\
&&-28+10n,7,-31+10n,0),\\
\S(R_{n-3}) & = & (48-10n,89-20n,45-10n,5,-52+10n,-105+20n,-49+10n,\\
&& -98+20n,-46+10n,0),\\
\S(R_{n-2}) & = & (-18+10n,-7,-21+10n,-31+20n,-38+30n,-41+40n, -36+30n,\\
&&-34+20n,-15+10n,0),\\
\S(R_{n-1}) & = & (32-10n,58-20n,86-30n,107-40n,83-30n,63-20n,26-10n,\\
&&-7,29-10n,0),\\
\S(R_{n}) & = & (-5,-3,-12,-6,4,17,34,19,1,0).
\end{eqnarray*}

\end{footnotesize}
\noindent Clearly, each row sums to $0$.
By a long direct check one can see that the elements of each $\S(R_t)$ are pairwise distinct modulo $20n+1$ for any even
$n\geq 12$. Also,  they are pairwise distinct modulo $20n+2$ if $n\equiv 0 \pmod6$, while, if $n\equiv 2\pmod 6$ and
$n\geq 32$, the partial sums $26+20i$ and $-76-40i$ of $\S(R_{5+2i})$ are equivalent modulo $20n+2$ when
$i=\frac{n-5}{3}$
and
if $n\equiv 4\pmod 6$ and $n\geq 34$ the partial sums $-46-40i$ and $36+20i$ of $\S(R_{2+2i})$ are equivalent modulo
$20n+2$ when $i=\frac{n-4}{3}$.
Moreover,  the following  partial sums $s$ and $s'$ of $\S(R_t)$ are congruent modulo $20n+2$:

$$
\begin{array}{cccc|cccc}
n & t & s & s' & n & t & s & s' \\ \hline
14 & n-3 &  89-20n & -49+10n &
22 & n-8 & 281-40n & 63-10n\\
16 & n-6  & -47+10n & 111-20n &
26 & n-7 & -174+20n & 84-10n \\
20 & n-5 & -198+30n & 0 &
28 & n-10 & 276-30n & -2
\end{array}
$$

From the definition of $H$ we obtain the following expression of its columns:
\begin{footnotesize}
\begin{eqnarray*}
C_1 & = & (16-10n,\q^{n-12},82-10n,\q,65-10n,-80+10n,-47+10n,\\
{} & {} &  -64+10n,-34+10n,48-10n,-18+10n,32-10n,\q)^T,\\
C_2 & = & (-9+10n,-21,7,\q,23,\q^{n-12},-43+10n,\q,-30+10n,41-10n,\\
&& 11-10n,26-10n,-5)^T, \\
C_3 & = & (12-10n,\q,-4,\q^{n-12},62-10n,\q,45-10n,-60+10n,-27+10n,\\
{} & {} &  -44+10n,-14+10n,28-10n,2)^T,\\
C_4 & = & (6-10n,-25,11,-41,27,\q,43,\q^{n-12},-23+10n,\q,-10+10n, 21-10n,-9)^T,\\
C_5 & = & (8-10n,22,-8,\q,-24,\q^{n-12},42-10n,\q,25-10n,-40+10n,-7+10n,\\
&&-24+10n,6)^T,\\
C_6 & = & (1-10n,-29,-14,-45,31,-61,47,\q,63,\q^{n-12},-3+10n,\q,10)^T,\\
C_7 & = & (-4+10n,26,-12,42,-28,\q,-44,\q^{n-12},22-10n,\q,5-10n, -20+10n,13)^T,\\
C_8 & = & (\q,30,-19,-49,-34,-65,51,-81,67,\q,83,\q^{n-12},17)^T,\\
 C_9 & = & (10n,33,16,46,-32,62,-48,\q,-64,\q^{n-12},2-10n,\q,-15)^T,\\
  C_{10+2i} & = & (\q^{2i+1},37+20i,\q,50+20i,-39-20i,-69-20i,-54-20i,  -85-20i,\\
&&71+20i,-101-20i,87+20i,\q,103+20i,\q^{n-13-2i})^T,\\
C_{11} & = & (\q,-35,20,53,36,66,-52,82,-68,\q,-84,\q^{n-12},-18)^T,\\
 C_{13+2i} & = & (\q^{2i+1},-38-20i,\q,-55-20i,40+20i,73+20i,56+20i,86+20i,\\
{} & {} &  -72-20i,102+20i,-88-20i,\q,-104-20i,\q^{n-13-2i})^T,\\
C_{n-2} & = & (-17+10n, \q^{n-12}, -83+10n,\q,-70+10n,81-10n,51-10n,  66-10n,\\
&&35-10n,-49+10n,19-10n,-33+10n,\q)^T,\\
C_{n} & = & (-13+10n,\q,3,\q^{n-12},-63+10n,\q,-50+10n,61-10n,31-10n,\\
{} & {} &  46-10n,15-10n,-29+10n,-1)^T,
\end{eqnarray*}

\end{footnotesize}
\noindent where $i=0,\ldots,\frac{n-14}{2}$.
We observe that each column contains exactly $10$ filled cells, then condition (c) is satisfied.
One can check that the partial sums of the columns are the following:
\begin{footnotesize}
\begin{eqnarray*}
\S(C_1) & = & (16-10n,98-20n,163-30n,83-20n,36-10n,-28,-62+10n, -14,\\
&&-32+10n,0),\\
\S(C_2) & = & (-9+10n,-30+10n,-23+10n,10n,-43+20n,-73+30n,-32+20n,\\
&& -21+10n,5,0), \\
\S(C_3) & = & (12-10n,8-10n,70-20n,115-30n,55-20n,28-10n,-16, -30+10n,\\
&&-2, 0),\\
\S(C_4) & = & (6-10n,-19-10n,-8-10n,-49-10n,-22-10n,21-10n, -2,\\
&&-12+10n,9, 0),\\
\S(C_5) & = & (8-10n,30-10n,22-10n,-2-10n,40-20n,65-30n,25-20n,\\
&&18-10n,-6, 0),\\
\S(C_6) & = & (1-10n,-28-10n,-42-10n,-87-10n,-56-10n,-117-10n,\\
&&-70-10n,-7-10n,-10, 0),\\
 \end{eqnarray*}
 \end{footnotesize}
 \begin{footnotesize}
 \begin{eqnarray*}
\S(C_7) & = & (-4+10n,22+10n,10+10n,52+10n,24+10n,-20+10n,2, 7-10n,\\
&&-13, 0),\\
\S(C_8) & = & (30,11,-38,-72,-137,-86,-167,-100,-17, 0),\\
\S(C_9) & = & (10n,33+10n,49+10n,95+10n,63+10n,125+10n,77+10n,\\
&& 13+10n,15, 0),\\
\S(C_{10+2i}) & = & (37+20i,87+40i,48+20i,-21,-75-20i,-160-40i,-89-20i,\\
&& -190-40i,-103-20i, 0),\\
\S(C_{11}) & = & (-35,-15,38,74,140,88,170,102,18, 0),\\
\S(C_{13+2i}) & = & (-38-20i,-93-40i,-53-20i,20,76+20i,162+40i,90+20i,\\
&& 192+40i,104+20i, 0),\\
\S(C_{n-2}) & = & (-17+10n,-100+20n,-170+30n,-89+20n,-38+10n,28,\\
&& 63-10n,14,33-10n, 0),\\
\S(C_{n}) & = & (-13+10n,-10+10n,-73+20n,-123+30n,-62+20n, -31+10n,\\
&&15,30-10n,1, 0).
\end{eqnarray*}

\end{footnotesize}
\noindent Note that each column sums to $0$, so condition (d) is satisfied,  hence $H$ is an $\H(n;10)$.
Also, again by a direct check, one can see that the elements of each $\S(C_t)$ are pairwise distinct modulo $20n+1$ for any even $n\geq 12$
and that they are pairwise distinct modulo $20n+2$ for all $n\equiv 0,4\pmod 6$.
While, if $n\equiv 2\pmod 6$ and $n\geq 26$, the partial sums $87+40i$ and $-75-20i$ of $\S(C_{10+2i})$ are equivalent
modulo $20n+2$ when $i=\frac{n-8}{3}$.
Also, if $n=14$ the partial sums $-123+30n$ and $15$ of $\S(C_{n})$ are congruent modulo $20n+2$.
Finally, if $n=20$ the partial sums $-170+30n$ and $28$ of $\S(C_{n-2})$ are congruent modulo $20n+2$.
We conclude that $H$ is an $\SH(n;10)$ for any even $n\geq 12$ and that it satisfies also condition $(\ast)$ when
$n\equiv 0\pmod 6$.

Suppose $n\equiv 4\pmod 6$. If $n=16,22,28$, an $\SH^*(n;10)$ can be found in \cite{web}. So let $n=6a+4\geq 34$ and
note that, since $n\equiv 4\pmod 6$, $H$ is an $\SH(n;10)$ whose columns are simple also modulo $20n+2$.
Recall that the only row which is not simple modulo $20n+2$ is $R_{4a+2}$. Interchanging the columns $C_{4a+9}$ (i.e., $C_{13+2i}$ with $i=2a-2$)
and $C_{4a+11}$  (i.e., $C_{13+2i}$ with $i=2a-1$), that are the following:
\begin{footnotesize}
\begin{eqnarray*}
C_{4a+9} & = & (\q^{4a-3},2-40a,\q,-15-40a, 40a, 33+40a, 16+40a, 46+40a, -32-40a,\\
&& 62+40a, -48-40a, \q,-64-40a,\q,\q,\q^{2a-7})^T,\\
C_{4a+11} & = & (\q^{4a-3},\q,\q,-18-40a,\q,-35-40a, 20+40a, 53+40a, 36+40a,   66+40a,\\
&& -52-40a, 82+40a, -68-40a,\q,-84-40a,\q^{2a-7})^T,
\end{eqnarray*}

\end{footnotesize}
\noindent we obtain an $\SH(n;10)$, say $H'$, satisfying also condition $(\ast)$.
In fact, $H'$ is again an $\H(n;10)$ whose columns are simple modulo $20n+1$ and $20n+2$ and so we are left to check its rows.
Looking at the position of the empty cells of the previous two columns, it is easy to see that interchanging them, only the rows
$R_{(4a-3)+1}$, $R_{(4a-3)+3}$, $R_{(4a-3)+4},\ldots, R_{(4a-3)+12}$, $R_{(4a-3)+14}$ change.
Hence, $H'$ is the required array if  these $12$ rows are simple
modulo $20n+1$
and $20n+2$. If $n=34,40,46$ we have checked by computer that $H'$ works (the reader can find these arrays in \cite{web}). For $n \geq 52$, we write explicitly the new rows, that will be denoted by $R'_t$, whose interchanged elements are in bold:
\begin{footnotesize}
  \begin{eqnarray*}
R'_{4a-2} & = & (\q^{4a-3},19-40a,\q,15-40a,-18+40a,11-40a,-14+40a, -10+40a,\\
&&-7+40a,-3+40a,5-40a,\q,\gr{\q},\q,\gr{2-40a},\q^{2a-7}),\\
R'_{4a} & = & (\q^{4a-1},-1-40a,\q,-5-40a,2+40a,-9-40a,6+40a, 10+40a,\\
&&13+40a,17+40a,\gr{-18-40a},\q,\gr{-15-40a},\q^{2a-7}),\\
 \end{eqnarray*}
 \end{footnotesize}
 \begin{footnotesize}
 \begin{eqnarray*}
R'_{4a+1} & = & (\q^{4a-3},-17+40a,\q,-13+40a,16-40a,-9+40a,12-40a,  6-40a,\\
&&8-40a,1-40a,-4+40a,\q,\gr{\q},\q,\gr{40a},\q^{2a-7}),\\
R'_{4a+2} & = & (\q^{4a+1},-21-40a,\q,-25-40a,22+40a,-29-40a,26+40a,  30+40a,\\
&&\gr{-35-40a},37+40a,\gr{33+40a},\q,-38-40a,\q^{2a-9}),\\
R'_{4a+3} & = & (\q^{4a-1},3+40a,\q,7+40a,-4-40a,11+40a,-8-40a, -14-40a,\\
&&-12-40a,-19-40a,\gr{20+40a},\q,\gr{16+40a},\q^{2a-7}),\\
R'_{4a+4} & = & (\q^{4a+3},-41-40a,\q,-45-40a,42+40a,-49-40a,\gr{53+40a},  50+40a,\\
&&\gr{46+40a},57+40a,-55-40a,\q,-58-40a,\q^{2a-11}),\\
R'_{4a+5} & = & (\q^{4a+1},23+40a,\q,27+40a,-24-40a,31+40a,-28-40a,  -34-40a,\\
&&\gr{36+40a},-39-40a,\gr{-32-40a},\q,40+40a,\q^{2a-9}),\\
R'_{4a+6} & = & (\q^{4a+5},-61-40a,\q,-65-40a,\gr{66+40a},-69-40a,\gr{62+40a},  70+40a,\\
&&73+40a,77+40a,-75-40a,\q,-78-40a,\q^{2a-13}),\\
R'_{4a+7} & = & (\q^{4a+3},43+40a,\q,47+40a,-44-40a,51+40a,\gr{-52-40a},  -54-40a,\\
&&\gr{-48-40a},-59-40a,56+40a,\q,60+40a,\q^{2a-11}),\\
R'_{4a+8} & = & (\q^{4a+7},-81-40a,\gr{82+40a},-85-40a,\gr{\q},-89-40a,86+40a,  90+40a,\\
&&93+40a,97+40a,-95-40a,\q,-98-40a,\q^{2a-15}),\\
R'_{4a+9} & = & (\q^{4a+5},63+40a,\q,67+40a,\gr{-68-40a},71+40a,\gr{-64-40a}, -74-40a,\\
&&-72-40a,-79-40a,76+40a,\q,80+40a,\q^{2a-13}),\\
R'_{4a+11} & = & (\q^{4a+7},83+40a,\gr{-84-40a},87+40a,\gr{\q},91+40a,-88-40a,  -94-40a,\\
&&-92-40a,-99-40a,96+40a,\q,100+40a,\q^{2a-15}).
\end{eqnarray*}

\end{footnotesize}
\noindent Notice that $\S(R_t')$ is obtained from $\S(R_t)$ replacing $A_t$ by $B_t$, where

\begin{footnotesize}
$$\begin{array}{lll}
t & A_t & B_t \\\hline
4a-2 & \emptyset & \emptyset\\
4a &  (18+40a ) &  ( 15+40a ) \\
4a+1 & \emptyset & \emptyset\\
4a+2 & (36+40a, 73+80a ) &   ( 5,  -32 -40a )\\
4a+3 & (-20-40a ) &  (  -16-40a )\\
4a+4 & ( 3, -47-40a ) &   (  10,  -40 -40a )\\
4a+5 & (-76-80a, -37-40a ) &  ( -8,  31+40a) \\
4a+6 & (-133-80a,  -64-40a ) &  ( -129 -80a, -60  -40a )\\
4a+7 & (-5, 49+40a ) &   (   -9,  45+40a )\\
4a+8 & ( -166-80a ) &   (    1 )\\
4a+9 & (  66+40a,  137+80a ) &   (  62+40a,  133+80a )\\
4a+11 & (  170+ 80a ) &    (  -1 )
\end{array}$$

\end{footnotesize}
\noindent Now, one has only to check that the elements of  $B_t$ and those of $\S(R_t)\setminus A_t$ are
pairwise distinct modulo $20n+1$ and $20n+2$. So $H'$ is an $\SH^*(n;10)$.

Suppose now $n\equiv 2 \pmod 6$, say $n=6a+2$. In this case $H$ is an  $\SH(n;10)$ such that neither its rows nor its columns are simple modulo $20n+2$. More precisely, the row $R_{4a+3}$ and the column $C_{4a+6}$ are not simple modulo $20n+2$. We construct a globally simple $\H(n;10)$ which satisfies also condition $(*)$ interchanging two rows and two
columns of $H$.

Firstly, we interchange the columns
$C_{4a+7}$ and $C_{4a+9}$ of $H$, obtaining a new array $H'$. Since
\begin{footnotesize}
\begin{eqnarray*}
C_{4a+7} & = & (\q^{4a-5},22-40a,\q,5-40a,-20+40a,13+40a,-4+40a, 26+40a,\\
&&-12-40a,42+40a,-28-40a,\q,-44-40a,\q,\q,\q^{2a-7})^T,\\
 \end{eqnarray*}
 \end{footnotesize}
 \begin{footnotesize}
 \begin{eqnarray*}
C_{4a+9} & = & (\q^{4a-5},\q,\q,2-40a,\q,-15-40a,40a,33+40a,16+40a,  46+40a,\\
&&-32-40a,62+40a,-48-40a,\q,-64-40a,\q^{2a-7})^T,
\end{eqnarray*}

\end{footnotesize}
\noindent looking at the position of the empty cells,
it is easy to see that interchanging them only the rows
$R_{(4a-5)+1}$, $R_{(4a-5)+3}$, $R_{(4a-5)+4},\ldots, R_{(4a-5)+12}$, $R_{(4a-5)+14}$ change.
We write explicitly the new rows, denoted by $R'_t$. As before the interchanged elements are in bold (for simplicity we assume $n\geq 50$, the cases $n=14,20,26,32,38,44$ can be found in \cite{web}):

\begin{footnotesize}
  \begin{eqnarray*}
R'_{4a-4} & = & (\q^{4a-5},39-40a,\q,35-40a,-38+40a,31-40a,-34+40a, -30+40a,\\
&&-27+40a,-23+40a,25-40a,\q,\gr{\q},\q,\gr{22-40a},\q^{2a-7}),\\
R'_{4a-2} & = & (\q^{4a-3},19-40a,\q,15-40a,-18+40a,11-40a,-14+40a,  -10+40a,\\
&&-7+40a,-3+40a,\gr{2-40a},\q,\gr{5-40a},\q^{2a-7}),\\
R'_{4a-1} & = & (\q^{4a-5},-37+40a,\q,-33+40a,36-40a,-29+40a,32-40a, 26-40a,\\
&&28-40a,21-40a,-24+40a,\q,\gr{\q},\q,\gr{-20+40a},\q^{2a-7}),\\
R'_{4a} & = & (\q^{4a-1},-1-40a,\q,-5-40a,2+40a,-9-40a,6+40a,  10+40a,\\
&&\gr{-15-40a},17+40a,\gr{13+40a},\q,-18-40a,\q^{2a-9}),\\
R'_{4a+1} & = & (\q^{4a-3},-17+40a,\q,-13+40a,16-40a,-9+40a,12-40a,  6-40a,\\
&&8-40a,1-40a,\gr{40a},\q,\gr{-4+40a},\q^{2a-7}),\\
R'_{4a+2} & = & (\q^{4a+1},-21-40a,\q,-25-40a,22+40a,-29-40a,\gr{33+40a},  30+40a,\\
&&\gr{26+40a},37+40a,-35-40a,\q,-38-40a,\q^{2a-11}),\\
R'_{4a+3} & = & (\q^{4a-1},3+40a,\q,7+40a,-4-40a,11+40a,-8-40a,  -14-40a,\\
&&\gr{16+40a},-19-40a,\gr{-12-40a},\q,20+40a,\q^{2a-9}),\\
R'_{4a+4} & = & (\q^{4a+3},-41-40a,\q,-45-40a,\gr{46+40a},-49-40a,\gr{42+40a},  50+40a,\\
&&53+40a,57+40a,-55-40a,\q,-58-40a,\q^{2a-13}),\\
R'_{4a+5} & = & (\q^{4a+1},23+40a,\q,27+40a,-24-40a,31+40a,\gr{-32-40a},  -34-40a,\\
&&\gr{-28-40a},-39-40a,36+40a,\q,40+40a,\q^{2a-11}),\\
R'_{4a+6} & = & (\q^{4a+5},-61-40a,\gr{62+40a},-65-40a,\gr{\q},-69-40a,66+40a,  70+40a,\\
&&73+40a,77+40a,-75-40a,\q,-78-40a,\q^{2a-15}),\\
R'_{4a+7} & = & (\q^{4a+3},43+40a,\q,47+40a,\gr{-48-40a},51+40a,\gr{-44-40a},  -54-40a,\\
&&-52-40a,-59-40a,56+40a,\q,60+40a,\q^{2a-13}),\\
R'_{4a+9} & = & (\q^{4a+5},63+40a,\gr{-64-40a},67+40a,\gr{\q},71+40a,-68-40a,  -74-40a,\\
&&-72-40a,-79-40a,76+40a,\q,80+40a,\q^{2a-15}).
\end{eqnarray*}

\end{footnotesize}
\noindent Note that $\S(R_t')$ is obtained from $\S(R_t)$ replacing $A_t$ by $B_t$, where

\begin{footnotesize}
$$\begin{array}{lll}
t & A_t & B_t \\\hline
4a-4 & \emptyset &  \emptyset \\
4a-2 & ( -2+40a ) &   ( -5+   40a ) \\
4a-1 & \emptyset &\emptyset \\
4a & (  16+40a,   33+ 80a ) & (  5,-12  -40a )\\
4a+1 & ( -40a ) & (  4 -40a\}\\
4a+2 & (  3,   -27-40a ) & (  10, -20 -40a ) \\
4a+3 & (  -36 -80a,  -17 -40a ) &  ( -8,  11+ 40a  )\\
4a+4 & (  -93  -80a,  -44 -40a ) & (  -89-80a,  -40-40a )\\
4a+5 & ( -5,    29+40a ) & ( -9,   25+40a ) \\
4a+6& ( -126  -80a ) & (  1 ) \\
4a+7 & ( 46+   40a,  97+  80a)  & ( 42+40a,  93+ 80a )\\
4a+9 & (  130+80a ) & (  -1 )
\end{array}$$

\end{footnotesize}
\noindent As before, one has only to check that the elements of  $B_t$ and those of $\S(R_t)\setminus A_t$ are pairwise distinct modulo $20n+1$ and $20n+2$.

Next, we interchange the rows $R'_{4a+3}$ and $R'_{4a+4}$ of $H'$ obtaining a new array $H''$.
It is easy to see that after this operation only the columns
$C'_{(4a-1)+1}$, $C'_{(4a-1)+3}$, $C'_{(4a-1)+4},\ldots, C'_{(4a-1)+14}$, $C'_{(4a-1)+16}$ of $H'$ have been modified. Since we are swapping two consecutive rows, in each of these columns two consecutive elements are interchanged. If one of them is an empty cell, the partial sums of this column remain the same. So, we omit to write explicitly these columns and it remains only to consider the following ones:

\begin{footnotesize}
\begin{eqnarray*}
C''_{4a+4} & = & (\q^{4a-5},-23+40a,\q,-10+40a,21-40a,-9-40a,6-40a,  -25-40a,\\
&&\gr{-41-40a},\gr{11+40a},27+40a,\q,43+40a,\q^{2a-5})^T,\\
C''_{4a+6} & = & (\q^{4a-3},-3+40a,\q,10+40a,1-40a,-29-40a,\gr{-45-40a},  \gr{-14-40a},\\
&&31+40a,-61-40a,47+40a,\q,63+40a,\q^{2a-7})^T,\\
C''_{4a+7} & = & (\q^{4a-3},2-40a,\q,-15-40a,40a,33+40a,\gr{46+40a},  \gr{16+40a},\\
&&-32-40a,62+40a,-48-40a,\q,-64-40a,\q^{2a-7})^T,\\
C''_{4a+8} & = & (\q^{4a-1},17+40a,\q,30+40a,\gr{-49-40a},\gr{-19-40a},-34-40a,  -65-40a,\\
&&51+40a,-81-40a,67+40a,\q,83+40a,\q^{2a-9})^T,\\
C''_{4a+9} & = & (\q^{4a-5},22-40a,\q,5-40a,-20+40a,13+40a,-4+40a,  26+40a,\\
&&\gr{42+40a},\gr{-12-40a},-28-40a,\q,-44-40a,\q^{2a-5})^T,\\
C''_{4a+11} & = & (\q^{4a-1},-18-40a,\q,-35-40a,\gr{53+40a},\gr{20+40a},36+40a,  66+40a,\\
&&-52-40a,82+40a,-68-40a,\q,-84-40a,\q^{2a-9})^T.\\
\end{eqnarray*}

\end{footnotesize}
\noindent
Notice that $\S(C_t'')$ is obtained from $\S(C_t')$ replacing $A_t$ by $B_t$, where
\begin{footnotesize}
$$\begin{array}{lll|lll}
t & A_t & B_t & t & A_t & B_t\\\hline
4a+4& ( -29-40a ) &  ( -81 -120a ) &
4a+8 & ( 28+40a ) & ( -2+40a )\\
4a+6 & ( -35-40a ) & ( -66-40a ) &
4a+9 & (30+40a ) & ( 84+120a) \\
4a+7 & (36+ 40a ) & ( 66+ 40a) &
4a+11 & ( -33-40a) &  (-40a)
\end{array}$$

\end{footnotesize}
\noindent Again by a direct check, one can verify that the partial sums of these $6$ columns are pairwise distinct both modulo $20n+1$ and $20n+2$. Hence, $H''$ is an $\SH^*(n;10)$ for any $n\equiv 2 \pmod 4$. This concludes the proof.
\end{proof}

\begin{ex}
Let $n=12$. By the construction given in the proof of Proposition \ref{prop:10}, we obtain the following
$\SH^*(12;10)$:
\begin{footnotesize}
$$
\begin{array}{|c|c|c|c|c|c|c|c|c|c|c|c|}\hline
-104 & 111 & -108 & -114 & -112 & -119 & 116 &  & 120 & 103 &  & 107 \\\hline
-38 & -21 &  & -25 & 22 & -29 & 26 & 30 & 33 & 37 & -35 &  \\\hline
 & 7 & -4 & 11 & -8 & -14 & -12 & -19 & 16 &  & 20 & 3 \\\hline
-55 &  & -58 & -41 &  & -45 & 42 & -49 & 46 & 50 & 53 & 57 \\\hline
40 & 23 &  & 27 & -24 & 31 & -28 & -34 & -32 & -39 & 36 &  \\\hline
73 & 77 & -75 &  & -78 & -61 &  & -65 & 62 & -69 & 66 & 70 \\\hline
56 &  & 60 & 43 &  & 47 & -44 & 51 & -48 & -54 & -52 & -59 \\\hline
86 & 90 & 93 & 97 & -95 &  & -98 & -81 &  & -85 & 82 & -89 \\\hline
-72 & -79 & 76 &  & 80 & 63 &  & 67 & -64 & 71 & -68 & -74 \\\hline
102 & -109 & 106 & 110 & 113 & 117 & -115 &  & -118 & -101 &  & -105 \\\hline
-88 & -94 & -92 & -99 & 96 &  & 100 & 83 &  & 87 & -84 & 91 \\\hline
 & -5 & 2 & -9 & 6 & 10 & 13 & 17 & -15 &  & -18 & -1 \\\hline
\end{array}
$$
\end{footnotesize}
\end{ex}

\section{Conclusions}

\begin{thm}\label{theo:SH}
Let $3\leq k\leq 10$. Then there exists an $\SH^*(n;k)$ if and only if $n\geq k$ and  $nk \equiv 0,3 \pmod 4$.
\end{thm}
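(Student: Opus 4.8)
The plan is to deduce the statement directly from Theorem~\ref{th:integer} and the constructions of Section~\ref{sec4}: no new combinatorics is required, only a case split on $k$ together with an arithmetic check that the constructions cover every admissible pair $(n,k)$.

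First I would settle necessity. Every $\SH^*(n;k)$ is, after forgetting global simplicity and condition~$(\ast)$, an integer Heffter array $\H(n;k)$; so Theorem~\ref{th:integer} forces $3\le k\le n$ and $nk\equiv 0,3\pmod 4$. Since $k\le 10$ here, $3\le k\le n$ is just $n\ge k$, which is the ``only if'' direction.

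For the ``if'' direction I would assume $n\ge k$ and $nk\equiv 0,3\pmod 4$ and argue by cases on $k$. For $k\in\{3,4,5\}$, Theorem~\ref{th:integer} gives an $\H(n;k)$, and by the discussion opening Section~\ref{sec4} (the partial sums always satisfy $s_i\ne s_{i+1}$ and $s_i\ne s_{i+2}$ modulo both $2nk+1$ and $2nk+2$, which for $k\le 5$ already yields simplicity) this array is an $\SH^*(n;k)$. For $k=6$, $6n$ is even, so $6n\equiv 0,3\pmod 4$ means $6n\equiv 0\pmod 4$, i.e.\ $n$ even; with $n\ge 6$ this is Proposition~\ref{prop:6}. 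For $k=7$, $7n\equiv 3n\pmod 4$, so the hypothesis is $n\equiv 0$ or $1\pmod 4$, handled by Propositions~\ref{SH7_even} ($n\equiv 0\pmod 4$, $n\ge 8$) and~\ref{SH7_odd} ($n\equiv 1\pmod 4$, $n\ge 9$); the smallest admissible $n\ge 7$ are $8$ and $9$, so nothing is missed. For $k=8$, $8n\equiv 0\pmod 4$ always, so the hypothesis is just $n\ge 8$, covered by Proposition~\ref{0,2} ($n$ even) and Proposition~\ref{1,3} ($n$ odd). For $k=9$, $9n\equiv n\pmod 4$, so the hypothesis is $n\equiv 0$ or $3\pmod 4$; Propositions~\ref{SH9_even} ($n\equiv 0\pmod 4$, $n\ge 12$) and~\ref{SH9_odd} ($n\equiv 3\pmod 4$, $n\ge 11$) cover it, the smallest admissible $n\ge 9$ being $11$ and $12$ (note $n=9,10$ are not admissible). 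For $k=10$, $10n\equiv 0,3\pmod 4$ forces $n$ even and $n\ge 10$, which is Proposition~\ref{prop:10}.

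The one point that deserves care---as opposed to being mechanical---is checking that for each $k$ the lower bounds on $n$ occurring in the relevant propositions, together with the small values verified separately in~\cite{web}, leave no admissible pair $(n,k)$ uncovered; this is exactly the short modular computation performed above. Beyond this collation there is no genuine obstacle in the theorem itself: all the work lies in the explicit constructions of Section~\ref{sec4}, and here one merely assembles them.
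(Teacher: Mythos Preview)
Your proposal is correct and follows essentially the same approach as the paper: necessity via Theorem~\ref{th:integer}, and sufficiency by the observation for $k\le 5$ together with a case split over $k\in\{6,\ldots,10\}$ that reduces to Propositions~\ref{prop:6}--\ref{prop:10}. The paper compresses your modular arithmetic into a table indexed by $k$ and the residue of $n$ modulo $4$, but the content is identical.
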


\begin{proof}
If $k=3,4,5$ the results follow from Theorem \ref{th:integer} and from the considerations on partial sums given at the beginning of Section \ref{sec4}.
So, assume $6\leq k\leq 10$. We recall that, by Theorem \ref{th:integer}, an $\H(n;k)$ exists only when $n\geq k$ and
$nk\equiv 0,3 \pmod 4$. For these cases, we give in the following table the proposition number where we constructed an
$\SH^*(n;k)$ (the first column $n_4$ gives the congruence class of $n$ modulo $4$).

\begin{center}
\begin{tabular}{|c|ccccc|}\hline
{$n_4$}$\diagdown${$k$} & $6$ & $7$ & $8$ & $9$ & $10$ \\ \hline
0 & \ref{prop:6} & \ref{SH7_even} & \ref{0,2} & \ref{SH9_even} & \ref{prop:10} \\
1 & $\nexists$ & \ref{SH7_odd} & \ref{1,3} & $\nexists$ &  $\nexists$ \\
2 & \ref{prop:6} & $\nexists$& \ref{0,2} & $\nexists$ & \ref{prop:10} \\
3 & $\nexists$ & $\nexists$ & \ref{1,3} & \ref{SH9_odd} & $\nexists$ \\\hline
\end{tabular}
\end{center}
\end{proof}

Now, Theorem \ref{th:main} easily follows from Theorem \ref{theo:SH} applying Proposition \ref{pr:ccp}.
The cases $1\leq n< k$ have been obtained with the help of a computer starting from the constructions given in \cite{BGL,BDF}, see \cite{web}.

\begin{proof}[Proof of Theorem \ref{thm:biemb}]
The $\SH^*(n;7)$ for $n\equiv 1 \pmod 4$ and the $\SH^*(n;9)$ for $11<n\equiv 3 \pmod 4$ obtained in Section \ref{sec4}
are cyclically $7$-diagonal and cyclically $9$-diagonal, respectively. The $\H(n;3)$ for $n\equiv 1 \pmod 4$ of \cite{ADDY} is cyclically $3$-diagonal.
Let $A=(a_{i,j})$ be  the $\H(n;5)$ described in \cite{DW} for $n\equiv 3 \pmod 4$ and define $h_{2i,2j}=a_{i,j}$.
Then $H=(h_{i,j})$ is a
cyclically $5$-diagonal $\SH^*(n;5)$.
By Propositions \ref{biembk} and \ref{biemb7} in each of these cases there are simple compatible orderings of the rows
and columns. Then, the result follows from Theorem \ref{Gustin}.

For the exceptional $\SH^*(11;9)$ described in \cite{web}, take as $\omega_r$ the natural
ordering of the rows from left to right and as $\omega_c$ the natural ordering of the columns from top to bottom for
the first $9$ columns, from bottom to top for the last two columns. Then $\omega_r,\omega_c$ are compatible orderings.
Again, we apply  Theorem \ref{Gustin}.
\end{proof}

\section*{Acknowledgements}
The authors would like to thank the anonymous referees for their useful suggestions and comments.

\end{document}